\newcommand*{\justifyheading}{\raggedright}
\definecolor{linkcolor}{rgb}{0.5,0.0,0.0}
\definecolor{citecolor}{rgb}{0.0,0.5,0.0}
\definecolor{urlcolor} {rgb}{0.0,0.0,0.5}
\theoremstyle{plain}
\newtheorem{lemma}{Lemma}
\newtheorem{theorem}{Theorem}
\newtheorem{proposition}{Proposition}
\newtheorem{corollary}{Corollary}
\theoremstyle{definition}
\newtheorem{definition}{Definition}
\newtheorem{example}{Example}
\theoremstyle{definition}
\newtheorem{remark}{Remark}
\newcommand{\projalg}{\ensuremath{\mathfrak{p}}}
\newcommand{\solSp}{\ensuremath{\mathfrak{A}}}
\newcommand{\lie}{\ensuremath{\mathcal{L}}}
\newcommand{\solpol}{\ensuremath{\mathrm{S}}}
\newcommand{\R}{\ensuremath{\mathds{R}}}
\newcommand{\lb}{\ensuremath{\left(}}
\newcommand{\rb}{\ensuremath{\right)}}
\newcommand{\la}{\ensuremath{\left\langle}}
\newcommand{\ra}{\ensuremath{\right\rangle}}
\DeclareMathOperator{\tr}{tr}
\DeclareMathOperator{\trace}{trace}
\DeclareMathOperator{\rank}{rank}
\DeclareMathOperator{\Id}{Id}
\newcommand{\weg}[1]{}
\newcommand{\bq}{\begin{equation}}
\newcommand{\eq}{\end{equation}}
\newcommand{\allowhy}{\nobreak\hskip\z@skip}
\newcommand{\pushright}[1]{\ifmeasuring@#1\else\omit\hfill$\displaystyle#1$\fi\ignorespaces}
\newcommand{\pushleft}[1]{\ifmeasuring@#1\else\omit$\displaystyle#1$\hfill\fi\ignorespaces}
\begin{document}

\title{3-dimensional Levi-Civita metrics with projective vector fields}
\author{Gianni Manno$^\star$}
\email{$^\star$giovanni.manno@polito.it}
\address{\upshape
    Dipartimento di Scienze Matematiche,
    Politecnico di Torino,
    Corso Duca degli Abruzzi, 24,
    10129 Torino, Italy
}

\author{Andreas Vollmer$^\mathsection$}
\email{$^\mathsection$andreas.vollmer@polito.it}

\makeatletter
\@namedef{subjclassname@2020}{%
	\textup{2020} Mathematics Subject Classification}
\makeatother
\subjclass[2020]{53A20, 53B10}

\begin{abstract}
 Projective vector fields are the infinitesimal transformations whose local
 flow preserves geodesics up to reparametrisation.
 In 1882 
 Sophus Lie posed the problem of describing 2-dimensional metrics admitting a
 non-trivial projective vector field, which was solved in recent years.
 In the present paper, we solve the analog of Lie's problem in dimension 3,
 for Riemannian metrics and, more generally, for Levi-Civita metrics of
 arbitrary signature.
\end{abstract}
\maketitle

\setcounter{tocdepth}{2}

\section{Introduction}

\subsection{Basic definitions and description of the problem}

Let $M$ be a smooth manifold of dimension $n$.
A \emph{metric} on $M$ is a symmetric, non-degenerate $(0,2)$-tensor field
$g$ (of arbitrary signature). In particular, if $g$ is positive
(resp.~negative) definite, it is called a \emph{Riemannian}
(resp.~\emph{anti-Riemannian}) metric.
If the signature of $g$ is either $(+\dots+-)$ or $(+-\dots-)$, it is called
\emph{Lorentzian}.
A metric is of \emph{constant curvature} if its sectional curvatures coincide
and are constant.
A curve $\R\supseteq I\ni t\mapsto\gamma(t)\in M$ such that
\begin{equation}\label{eqn:geodesic.equation}
	\nabla_{\dot\gamma}\dot\gamma = f(t)\dot\gamma
\end{equation}
for some function $f(t)$ is an \emph{unparametrised geodesic of $g$}.

\smallskip\noindent
The present paper studies metrics that admit vector fields whose local
flow preserves unparametrised geodesics.

\begin{definition}
	A \emph{projective transformation} is a (local) diffeomorphism of $M$
	that sends geodesics into geodesics (where geodesics are to be understood
	as unparametrised curves).
	A vector field on $M$ is \emph{projective} if its (local) flow acts by
	projective transformations.
	A projective vector field $v$ is \emph{homothetic} (for the metric $g$)
	if the Lie
	derivative of $g$ along $v$ satisfies $\lie_vg=\lambda g$ for some
	constant
	$\lambda\in\R$.  If $\lambda\neq0$, we say that $v$ is \emph{properly
	homothetic}.
	If $v$ is not homothetic, we call it an \emph{essential} projective
	vector field.
\end{definition}

\noindent The projective vector fields of a given metric $g$ form a Lie
algebra \cite{lie_1883}, which we denote by $\projalg(g)$.
It is easy to confirm that the following definition indeed gives rise to an
equivalence relation.

\begin{definition}
	We say that two metrics (on the same manifold) are \emph{projectively
	equivalent} if they share the same geodesics (as
	unparametrised curves). The set of all  metrics projectively equivalent
	to a given metric~$g$ is called the \emph{projective class} of~$g$.
\end{definition}
\smallskip

In the 1880s, Sophus Lie posed the following problem for 2-dimensional
surfaces
\cite{lie_1882,lie_1883}: \emph{Find the metrics that describe surfaces whose
geodesic
	curves admit an infinitesimal transformation}\footnote{Original German
	wording in \cite{lie_1882}: \emph{``Es wird verlangt, die Form des
		Bogenelementes einer jeden Fl\"ache zu bestimmen, deren geod\"atische
		Curven
		eine infinitesimale Transformation gestatten.''}}.
This problem has been solved during the recent years in
\cite{bryant_2008,matveev_2012,manno_2020}.
Related work can be found in
\cite{dini_1869,liouville_1889,fubini_1903,aminova_2003,aminova_2006,bryant_2009}.

The present paper is concerned with the analogous problem for 3-dimensional manifolds, i.e., to find 3-dimensional metrics
admitting a non-trivial projective vector field. We solve this problem
 for 3-dimensional Riemannian metrics and, more generally, for Levi-Civita
 metrics (thoroughly introduced in Section~\ref{sec:LC.metrics}) of
 arbitrary signature.

%
%

\subsection{State of the art}

It is well-known that, given a 3-dimensional metric $g$ that admits a
homothetic vector field $v$, there exist, around a non-singular point of $v$,
local coordinates $(x,y,z)$ such that $v=\partial_x$ and
\begin{equation}\label{eqn:homothetic.normal.form}
	g = e^{\lambda\,x} h(y,z)\,,\quad\lambda\in\R\,,
\end{equation}
where $h(y,z)$ is a non-degenerate metric
depending on the coordinates $y$ and $z$ only.
It is easy to confirm that, in the case when all metrics in a projective
class $[g]$ are proportional (by a real constant), any projective vector
field is homothetic and thus the metric $g$ locally can be written
in the form~\eqref{eqn:homothetic.normal.form}. For this reason, we focus our
attention to metrics whose projective class contains a pair of non-proportional
metrics.
%

\smallskip\noindent
It follows from ~\cite{levi-civita_1896,matveev_2012rel} that a Riemannian metric $g$ of non-constant curvature and belonging to the aforementioned class
can be taken to be locally either in the form
\begin{equation}\label{eqn:LC.111}
	\pm(X_1-X_2)(X_1-X_3)\,(dx^1)^2\pm(X_2-X_1)(X_2-X_3)\,(dx^2)^2\pm(X_3-X_1)(X_3-X_2)\,(dx^3)^2\,,\,\,X_i=X_i(x^i)
\end{equation}
or in the form
\begin{equation}\label{eqn:LC.21}
	\zeta(x^3)\,(h \pm (dx^3)^2)\,,
\end{equation}
where $h=\sum_{i,j=1}^2h_{ij}dx^idx^j$ is a 2-dimensional metric (note
however, that not all metrics~\eqref{eqn:LC.111} and~\eqref{eqn:LC.21} are
Riemannian).
The metrics~\eqref{eqn:LC.111} and~\eqref{eqn:LC.21} are the so-called
\emph{Levi-Civita metrics}; this class of metrics will be thoroughly
introduced in Section~\ref{sec:LC.metrics}.
Our aim will be to find the functions $X_i(x^i)$ and $\zeta(x^3)$, and the
metrics $h$, such that a projective vector field exists for $g$.
\medskip

Projective vector fields for Levi-Civita metrics, mainly those of the
form~\eqref{eqn:LC.111}, are studied in \cite{solodovnikov_1956}, under the
assumption of Riemannian signature.
The reference then finds explicit ordinary differential equations (ODEs) for
the functions $X_i$ and the projective symmetries, both essential and
homothetic ones.
Generalisations of some of these ODEs can be found in recent works concerned
with splitting-gluing constructions of projectively equivalent metrics
\cite{bolsinov_2011} and c-projective vector fields \cite{bolsinov_2015cproj}
and will be discussed in detail later.

\smallskip

\noindent However, explicit descriptions of 3-dimensional metrics with
projective vector fields do not exist to date except for some special cases.
The current paper closes this gap by providing explicit descriptions of
3-dimensional metrics with projective vector fields.

\subsection{A first look at the results}

We explicitly describe all metrics~\eqref{eqn:LC.111} and~\eqref{eqn:LC.21}
with non-zero projective
vector fields.
The explicit metrics can be found in Sections~\ref{sec:111} and~\ref{sec:21},
see Theorems~\ref{thm:1.1.1} and~\ref{thm:2.1}, respectively.
The following theorem is a first look at the results we obtain, stated for
3-dimensional Riemannian metrics. If the metrics in Theorem~\ref{thm:main}
are of non-Riemannian signature, they still admit the projective vector
fields indicated. We proceed to make this precise in the main body of the
paper.
A final remark concerns the case when the metric assumes the
form~\eqref{eqn:homothetic.normal.form} in suitable local coordinates.
A description of the projective algebras of these metrics will be
given in this paper, in Theorems~\ref{thm:1.1.1} and~\ref{thm:2.1}, along
with Corollary~\ref{cor:constant.Z} and Proposition~\ref{prop:1D.algebra.21},
but is omitted here for brevity.

\begin{theorem}\label{thm:main}
	Let $g$ be a 3-dimensional Riemannian metric with a non-vanishing
	projective vector field. Then either $g$ is locally of the
	form~\eqref{eqn:homothetic.normal.form} or it is among the following:
	\begin{enumerate}
		\item\label{item:main.111.1}
		The metric, for $\beta\ne0, k_i\ne0$,
		\begin{align*}
			g &= k_1(\tanh(x)-\tanh(y))
					(\tanh(x)-\tanh(z))\,e^{\frac{2x}{\beta}}\,dx^2
			\\ & \quad
				+k_2(\tanh(y)-\tanh(x))
					(\tanh(y)-\tanh(z))\,e^{\frac{2y}{\beta}}\,dy^2
			\\ & \quad
				+k_3(\tanh(z)-\tanh(x))
					(\tanh(z)-\tanh(y))\,e^{\frac{2z}{\beta}}\,dz^2
		\end{align*}
		with the essential projective vector field
		$v=\partial_x+\partial_y+\partial_z$.
		\item\label{item:main.111.2}
		The metric, for $k_i\ne0$,
		\begin{align*}
			g &= k_1\left(\tfrac1x-\tfrac1y\right)
					\left(\tfrac1x-\tfrac1z\right)\,e^{2x}\,dx^2
				+k_2\left(\tfrac1y-\tfrac1x\right)
					\left(\tfrac1y-\tfrac1z\right)\,e^{2y}\,dy^2
			\\ & \quad
				+k_3\left(\tfrac1z-\tfrac1x\right)
					\left(\tfrac1z-\tfrac1y\right)\,e^{2z}\,dz^2
		\end{align*}
		with the essential projective vector field
		$v=\partial_x+\partial_y+\partial_z$.
		\item\label{item:main.111.3}
		The metric, for $\beta\ne0, k_i\ne0$,
		\begin{align*}
			g &= k_1(\tan(x)-\tan(y))
					(\tan(x)-\tan(z))\,e^{\frac{2x}{\beta}}\,dx^2
			\\ & \quad
				+k_2(\tan(y)-\tan(x))
					(\tan(y)-\tan(z))\,e^{\frac{2y}{\beta}}\,dy^2
			\\ & \quad
				+k_3(\tan(z)-\tan(x))
					(\tan(z)-\tan(y))\,e^{\frac{2z}{\beta}}\,dz^2
		\end{align*}
		with the essential projective vector field
		$v=\partial_x+\partial_y+\partial_z$.
		\item\label{item:main.111.4}
		The metric, for $k_i\ne0$,
		\begin{align*}
			g &= k_1(\tan(x)-\tan(y))(\tan(x)-\tan(z))\,dx^2
				+k_2(\tan(y)-\tan(x))(\tan(y)-\tan(z))\,dy^2
			\\ & \quad
				+k_3(\tan(z)-\tan(x))(\tan(z)-\tan(y))\,dz^2
		\end{align*}
		with the essential projective vector field
		$v=\partial_x+\partial_y+\partial_z$.
		\item\label{item:main.111.5}
		The metric, for $k_i\ne0$,
		\begin{align*}
			g &= k_1(\tanh(x)-\tanh(y))(\tanh(x)-\tanh(z))\,dx^2
			\\ & \quad
				+k_2(\tanh(y)-\tanh(x))(\tanh(y)-\tanh(z))\,dy^2
			\\ & \quad
				+k_3(\tanh(z)-\tanh(x))(\tanh(z)-\tanh(y))\,dz^2
		\end{align*}
		with the essential projective vector field
		$v=\partial_x+\partial_y+\partial_z$.
		\item\label{item:main.21.1}
		The metric
		\[
			g = \frac{\beta}{z^2}\,(h+dz^2)
		\]
		with the essential projective vector field $v = \frac1z\,\partial_z$,
		where $h=h_{11}dx^2+2h_{12}dxdy+h_{22}dy^2$ does not admit any
		homothetic vector field.
		\item\label{item:main.21.2}
		The metric
		\[
			g = \beta\,(1+\tan^2(z))(h+dz^2)
		\]
		with the essential projective vector field $v = \tan(z)\,\partial_z$,
		where $h=h_{11}dx^2+2h_{12}dxdy+h_{22}dy^2$ does not admit any Killing
		vector field.
		\item\label{item:main.21.3}
		The metric
		\[
			g = \beta\,(1-\tanh^2(z))(h+dz^2)
		\]
		with the essential projective vector field $v = \tanh(z)\,\partial_z$,
		where $h=h_{11}dx^2+2h_{12}dxdy+h_{22}dy^2$ does not admit any Killing
		vector field.		
	\end{enumerate}	
\end{theorem}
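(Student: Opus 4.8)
Theorem~\ref{thm:main} is the Riemannian shadow of the two detailed classifications carried out in Sections~\ref{sec:111} and~\ref{sec:21} (Theorems~\ref{thm:1.1.1} and~\ref{thm:2.1}), so the plan is to outline how those are obtained and then simply read off the Riemannian entries.

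The first step is the reduction to Levi--Civita normal form. Fix a point at which the given $v\in\projalg(g)$ does not vanish, and suppose $g$ is \emph{not} locally of the form \eqref{eqn:homothetic.normal.form} near it. Then $g$ admits no non-vanishing local homothetic (a fortiori no non-trivial Killing) vector field: indeed, straightening such a field to $\partial_x$ would make $g=e^{\lambda x}h(y,z)$. Since every metric of constant curvature does admit such a field (being locally homogeneous), $g$ has non-constant curvature; and since --- as observed above --- a projective class all of whose members are proportional would render every projective vector field homothetic, whence $g$ would after all be of the form \eqref{eqn:homothetic.normal.form}, the class $[g]$ must contain non-proportional metrics. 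By \cite{levi-civita_1896,matveev_2012rel} the metric $g$ is therefore locally one of \eqref{eqn:LC.111} or \eqref{eqn:LC.21}, and I continue with these two ans\"atze, treating the functions $X_i(x^i)$ (resp.\ $\zeta(x^3)$ and the $2$-metric $h$) as the unknowns to be determined.

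The second step linearises the problem. The metrics projectively equivalent to $g$ are in bijection with the nondegenerate solutions $\sigma$ of the linear second-order system $\nabla_k\sigma^{ij}=\delta^i_k\lambda^j+\delta^j_k\lambda^i$; write $\solSp(g)$ for the (finite-dimensional) solution space. A vector field $v$ is projective precisely when $\lie_v\nabla=\Id\odot\,\phi$ for some $1$-form $\phi$, and then $\lie_v$ maps $\solSp(g)$ to itself and sends the solution attached to $g$ to a controlled combination of itself and the solution attached to $\lie_vg$ (the latter vanishing iff $v$ is homothetic). One then (i) computes $\solSp(g)$ explicitly for the Levi--Civita ans\"atze --- this is classical, and for \eqref{eqn:LC.111} it is the separation-of-variables data attached to the $X_i$, cf.\ \cite{solodovnikov_1956,bolsinov_2011,bolsinov_2015cproj}; and (ii) writes $v=\sum_i v^i(x)\partial_i$, imposes $\lie_v\nabla=\Id\odot\,\phi$, and splits the resulting identities by their dependence on the separated coordinates. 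Step (ii) pins the shape of $v$ down to something rigid --- for \eqref{eqn:LC.111} one is led, after exhausting the residual freedom, to $v=\partial_x+\partial_y+\partial_z$, and for \eqref{eqn:LC.21} to $v=\rho(z)\,\partial_z$ --- and yields a decoupled ordinary differential equation for each $X_i$ and for $\zeta$, together with finitely many algebraic normalisations.

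It remains to solve these equations and assemble the list. For \eqref{eqn:LC.21} the equation for $\zeta$ is equivalent to $u''=c\,u$ with $u:=1/\sqrt{\zeta}$ and $c$ a constant; the sign of $c$ gives $\zeta$ proportional to $1/z^2$, $1+\tan^2 z$, or $1-\tanh^2 z$, with $v=(\log u)'\,\partial_z$ equal to $\tfrac1z\partial_z$, $\tan z\,\partial_z$, $\tanh z\,\partial_z$, while the leftover compatibility with the $h$-block is exactly that $h$ carry no homothetic (resp.\ no Killing) field --- items \ref{item:main.21.1}--\ref{item:main.21.3}. For \eqref{eqn:LC.111} the ordinary differential equation for the $X_i$, taken modulo the equivalences preserving the normal form (affine reparametrisation of each $x^i$, a simultaneous M\"obius action on the $X_i$, permutations of the indices, and the scalings $k_i$), has as solutions exactly $X_i=\tanh x^i$, $X_i=\tan x^i$ and $X_i=1/x^i$, possibly carrying the per-coordinate conformal factors $e^{2x^i/\beta}$ that the homothety content allows; imposing positive definiteness selects the sign patterns and the open conditions $\beta\ne0$, $k_i\ne0$ --- items \ref{item:main.111.1}--\ref{item:main.111.5}. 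One then checks by direct computation that the indicated $v$ is projective and essential for each listed metric, and conversely that these exhaust the Riemannian possibilities; the situation where $g$ is in addition of the form \eqref{eqn:homothetic.normal.form} is recorded separately in Corollary~\ref{cor:constant.Z} and Proposition~\ref{prop:1D.algebra.21}. The main obstacle is precisely this last point: carrying out the case analysis for the $X_i$ while tracking the full equivalence group of \eqref{eqn:LC.111}, so as neither to lose branches --- e.g.\ when some $X_i$ coincide or degenerate to constants, or when the degree of mobility exceeds its generic value $2$ --- nor to list the same metric twice; a secondary delicate point is verifying that every metric in the final list genuinely has non-constant curvature, so that the dichotomy above is exhaustive.
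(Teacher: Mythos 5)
Your overall architecture is the same as the paper's: reduce to the Levi--Civita ans\"atze \eqref{eqn:LC.111} and \eqref{eqn:LC.21} (non-constant curvature, degree of mobility $2$, diagonalisable Benenti tensor in Riemannian signature), let $\lie_v$ act on the two-dimensional metrisation space, extract ODEs for the $X_i$ and for $\zeta$, solve, and then filter by the hypothesis that $g$ is not of the form \eqref{eqn:homothetic.normal.form}. The paper's actual proof of the theorem is only that last filtering step, applied to classifications it has already established (Theorem~\ref{thm:1.1.1} via Propositions~\ref{prop:111.2constantEVs}--\ref{prop:111.0constantEVs}, and Theorem~\ref{thm:2.1} together with Proposition~\ref{prop:1D.algebra.21}); your proposal compresses the derivation of those classifications into a sketch and explicitly defers the case analysis, which is where the substance of the statement lies.

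The concrete gap is in your [2-1] step. You claim the $\zeta$-equation is equivalent to $u''=cu$ with $u=\zeta^{-1/2}$ and that ``the sign of $c$'' yields exactly $\zeta\propto 1/z^2$, $1+\tan^2 z$, $1-\tanh^2 z$. For $c>0$ the general solution is $u=Ae^{\sqrt{c}z}+Be^{-\sqrt{c}z}$, which up to real translation is $\cosh$, $\sinh$, or a single exponential; only the $\cosh$ branch gives $1-\tanh^2$. The exponential branch gives $\zeta\propto e^{\beta z}$, for which $\partial_z$ is homothetic, so it must be discarded through the \eqref{eqn:homothetic.normal.form} alternative --- an exclusion you never perform. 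The $\sinh$ branch gives $\zeta\propto 1/\sinh^2(z)=\coth^2(z)-1$ with candidate field $\coth(z)\,\partial_z$; this pair satisfies the same first-order system \eqref{eqn:alpha.prime}--\eqref{eqn:alpha.zeta.prime} with $C=0$, and it is not carried into any listed form by the admissible real changes $z\mapsto\pm z+\mathrm{const}$. You must either show it does not yield a projective vector field, reduce it to a listed case, or add it; as written your argument is silently non-exhaustive precisely where exhaustiveness is the theorem. (The paper's corresponding Lemma~\ref{la:zero.dimensional.algebra} records only the tan/tanh branches of the analogous Riccati-type reduction, so your final list coincides with the paper's; but your own reduction makes the extra branch visible, and you owe an argument for it.) A second, softer gap: you assert $v$ is ``pinned'' to $\partial_x+\partial_y+\partial_z$, resp.\ $\rho(z)\partial_z$. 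For \eqref{eqn:LC.21} one only gets $v=u+\alpha(z)\partial_z$ with $u$ homothetic for $h$ (Proposition~\ref{prop:proj.symmetries.descend.21type}); eliminating $u$ is exactly where the asymmetry between item~\ref{item:main.21.1} (no homothetic field of $h$) and items~\ref{item:main.21.2}--\ref{item:main.21.3} (only no Killing field) originates, since for $\zeta\propto1/z^2$ a proper homothety of $h$ does combine with a multiple of $z\partial_z$ into an additional projective field, while in the tan/tanh cases compatibility forces the homothety constant to vanish. You state the correct hypotheses on $h$ but do not derive this dichotomy; likewise, in the [1-1-1] case the scaling field $x\partial_x+y\partial_y+z\partial_z$ also occurs and only becomes $\partial_x+\partial_y+\partial_z$ after the exponential change of coordinates applied to the relevant cases of Proposition~\ref{prop:111.0constantEVs}, a step your outline buries in ``residual freedom''.
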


\subsection{Structure and strategy of the paper}

The basic theory of metrisable projective connections, of projectively
equivalent metrics, and of projective vector fields (from the angle of Lie
theory of symmetries of differential equations) is reviewed in Section
\ref{Preliminaries}. An important object are certain (1,1)-tensors associated
to a pair of projectively equivalent metrics, called Benenti tensors. Their
eigenvalues are the functions $X_i$ and $\zeta$ that determine, respectively,
the metrics~\eqref{eqn:LC.111} and~\eqref{eqn:LC.21}.

As we explain later the projective class of these metrics is described by a 
2-dimensional linear space, 
cf.~Definition~\ref{def:Liouville.space.metr.space} and 
Proposition~\ref{prop:kiosak.matveev}.
This space is endowed with the
action of the projective symmetry algebra, studied in Section
\ref{sec:Lv.action}. Its matrix description defines a polynomial (called 
Solodovnikov's polynomial) which gives rise to differential equations
for the eigenvalues of the Benenti tensors, see Lemma
\ref{la:eigenvalues.L.roots.solodovnikov}. Together with the differential
equations arising from the projective symmetry action on the metric, see
Lemma \ref{la:proj.action.to.metrics}, this allows us to obtain the
explicit metrics by solving certain systems of ODEs. The details of the
procedure will be elaborated in Sections \ref{sec:111} and \ref{sec:21}.

In Section \ref{sec:LC.metrics} we formally introduce Levi-Civita metrics in
dimension $n$, which in the specific case $n=3$ lead to~\eqref{eqn:LC.111}
and~\eqref{eqn:LC.21}. The remaining two sections contain the main outcomes 
of the paper: In Section~\ref{sec:111}, we obtain local normal forms for 
\eqref{eqn:LC.111} (see Theorem~\ref{thm:1.1.1}) and in Section~\ref{sec:21} 
those for~\eqref{eqn:LC.21} (see Theorem \ref{thm:2.1}).

The proof of the main results obtained in this paper are based on a
combination of the methods from \cite{solodovnikov_1956} with the more recent
techniques from the classical Lie problem \cite{bryant_2008,matveev_2012},
from c-projective geometry \cite{bolsinov_2015cproj}, and from the
splitting-gluing theory of projectively equivalent metrics
\cite{bolsinov_2011,bolsinov_2015}.
The latter, in particular, allows us to reduce projective vector fields to
lower-dimensional homothetic vector fields, see
Propositions~\ref{prop:splitting.gluing.111} and
\ref{prop:proj.symmetries.descend.21type} as well as \ref{prop:gluing} for
details.
\medskip

Note the following notation that we apply from now on:
We use a comma to denote usual derivatives, e.g.~``$,a$'' denotes
differentiation w.r.t.~$a$-th coordinate direction. The comma will be omitted
when derivatives are interpreted as coordinates on certain jet spaces.
Unless otherwise clarified, Einstein's summation convention applies to
repeated indices.
We omit the comma and use a simple subscript to refer to coordinates on the 
jet space, see Section~\ref{sec:proj.conn.3d} for more details.

\section{Preliminaries}\label{Preliminaries}

\subsection{Metrisability of projective connections}

Let us now consider an $n$-dimensional metric $g$, given in terms of an 
explicit system of coordinates
\[
	(x^1,x^2,\dots,x^{n}) = (x,y^2,\dots,y^{n})\,.
\]
It gives rise, via its Levi-Civita connection, to a system of second order
ODEs
\begin{equation}\label{eqn:proj.conn.gen.multidim}
	y^k_{xx}=-\Gamma_{11}^k -\sum_{i=2}^n (2\Gamma_{1i}^k-\delta^k_i\Gamma_{11}^1)  y^i_x -\sum_{i,j=2}^n (\Gamma_{ij}^k  - 2\delta^k_i \Gamma_{1j}^1 )
	y^i_x y^j_x + \sum_{i,j=2}^n\Gamma_{ij}^1 \, y^i_x y^j_x y^k_x\,,\quad k=2,\dots,n\,,
\end{equation}
where $y^k=y^k(x)$ and where
\begin{equation}\label{eqn:Chris}
	\Gamma^k_{ij}=\frac12 g^{kh}(g_{jh,i}+g_{ih,j}-g_{ij,h})
\end{equation}
are the Christoffel symbols of the Levi-Civita connection of $g$.
System \eqref{eqn:proj.conn.gen.multidim} is called \emph{the projective
connection associated to $g$}. The name is justified by the fact that, for a
solution $y^k(x)$ to \eqref{eqn:proj.conn.gen.multidim}, the curve
$(x,y^2(x),\dots,y^n(x))$ is a geodesic of $g$ up to reparametrisation. In
fact, System \eqref{eqn:proj.conn.gen.multidim} can be achieved by
eliminating the external parameter from the classical geodesic
equation~\eqref{eqn:geodesic.equation}.

\begin{remark}\label{rem.proj.symm.as.point}
	In the context of the theory of symmetries of (systems of) differential
	equations, local diffeomorphisms $(x^1,\dots,x^n)\to
	\big(\tilde{x}^1(x^1,\dots,x^n),\dots,\tilde{x}^n(x^1,\dots,x^n)\big)$
	preserving \eqref{eqn:proj.conn.gen.multidim} are called \emph{finite
	point symmetries}.
	These send solutions of~\eqref{eqn:proj.conn.gen.multidim} to solutions
	and are projective transformations of~$g$, because solutions to
	\eqref{eqn:proj.conn.gen.multidim} are unparametrised geodesics of $g$ by
	construction. Infinitesimal point symmetries of
	\eqref{eqn:proj.conn.gen.multidim} are projective vector fields of $g$.
\end{remark}

\begin{example}
For $n=2$, in particular, and with $(x,y^2)=(x,y)$, System
\eqref{eqn:proj.conn.gen.multidim} reduces to the classical $2$-dimensional
projective connection associated to a $2$-dimensional metric
\cite{beltrami_1865}
\begin{equation}\label{eqn:projective.connection.2.dim}
	y_{xx} = -\Gamma^2_{11} +(\Gamma^1_{11}-2\Gamma^2_{12})\,y_x -(\Gamma^2_{22}-2\Gamma^1_{12})\,y_x^2 +\Gamma^1_{22}\,y_x^3\,.
\end{equation}
This equation has been extensively studied in \cite{bryant_2008}.
\end{example}

A general ($n$-dimensional) projective connection has the following form:
\begin{equation}\label{eqn:proj.conn.gen.multidim.2}
	y^k_{xx}=f_{11}^k + \sum_{i=2}^n f_{1m}^k  y^i_x + \sum_{i,j=2}^n f_{ij}^k
	y^i_x y^j_x + \sum_{i,j=2}^n f_{ij} \, y^i_x y^j_x y^k_x\,,\quad k=2,\dots,n\,\,,
\end{equation}
where w.l.o.g.\ we can suppose $f^k_{ij}$ and $f^1_{ij}:=f_{ij}$ symmetric in 
the lower indices.
A natural question is whether or not the projective connection
\eqref{eqn:proj.conn.gen.multidim.2} is metrisable, i.e.\ if there exists an
$n$-dimensional metric~$g$ such that \eqref{eqn:proj.conn.gen.multidim} is
equal to \eqref{eqn:proj.conn.gen.multidim.2}. This is equivalent to the
existence of a solution to the system
\begin{equation}\label{eqn:non.lin.syst}
	-\Gamma_{11}^k=f_{11}^k \,,\,\,\, - (2\Gamma_{1i}^k-\delta^k_i\Gamma_{11}^1)=  f_{1i}^k  \,,\,\,\,  -(\Gamma^k_{ij}-\delta^k_i\Gamma^1_{1j}-\delta^k_j\Gamma^1_{1i}) = f_{ij}^k\,,\,\,\,
	\Gamma_{ij}^1 =f_{ij}
\end{equation}
where $\Gamma^k_{ij}$ is given by \eqref{eqn:Chris}.
\begin{definition}
	The projective connection \eqref{eqn:proj.conn.gen.multidim.2} is
	\emph{metrisable} if there exists a Levi-Civita connection $\nabla$ such
	that~\eqref{eqn:non.lin.syst} is satisfied.
\end{definition}
\noindent If in \eqref{eqn:non.lin.syst} we perform the substitution
\begin{equation}\label{eqn:sigma.in.terms.of.g}
	\sigma^{ij}=|\det(g)|^{\frac{1}{n+1}}g^{ij}\in S^2(M)\otimes (\Lambda^n(M))^{\frac{2}{n+1}}\,,
\end{equation}
where $g^{ij}$ are the entries of the inverse metric of $g_{ij}$,
we obtain a \emph{linear} system in the unknowns $\sigma^{ij}$ (see
\cite{eastwood_2008}). The inverse transformation is
\begin{equation}\label{eqn:g.in.terms.of.sigma}
	g^{ij}=|\det(\sigma)|\sigma^{ij}\,.
\end{equation}
More precisely we have the following proposition.
\begin{proposition}[\cite{eastwood_2008}]\label{prop:lin.system}
	A metric $g$ on an $n$-dimensional manifold lies in the projective class
	of a given connection~$\nabla$ if and only if $\sigma^{ij}$ defined by
	\eqref{eqn:sigma.in.terms.of.g} is a solution of the linear system
	\begin{equation}\label{eqn:linear.system}
		\nabla_a\sigma^{bc}-\frac{1}{n+1}(\delta^c_a\nabla_i\sigma^{ib} + \delta^b_a\nabla_i\sigma^{ic} )=0\quad
\end{equation}
with
\begin{equation*}
\nabla_a\sigma^{bc} = \sigma^{bc}_{,a} + \Gamma^b_{ad}\sigma^{dc} + \Gamma^c_{ad}\sigma^{db} - \frac{2}{n+1}\Gamma^d_{da}\sigma^{bc}\,,
	\end{equation*}
	where $\Gamma^k_{ij}$ are the Christoffel symbols of the Levi-Civita
	connection of $g$.
\end{proposition}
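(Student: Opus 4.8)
The plan is to prove the equivalence by routing through the Levi-Civita connection $\nabla^g$ of $g$ together with the classical description of projectively equivalent connections. Recall the standard fact that two torsion-free connections have the same unparametrised geodesics if and only if their Christoffel symbols differ by a term of the shape $\delta^c_a\Upsilon_b+\delta^c_b\Upsilon_a$ for some $1$-form $\Upsilon$; and that $g$ lies in the projective class of $\nabla$ precisely when $\nabla^g$ is projectively equivalent to $\nabla$ in this sense. Hence the statement reduces to showing that the existence of a $1$-form $\Upsilon$ with $\bar\Gamma^c_{ab}:=\Gamma^c_{ab}+\delta^c_a\Upsilon_b+\delta^c_b\Upsilon_a$ equal to the Christoffel symbols of $g$ is equivalent to $\sigma^{bc}$, defined from $g$ by \eqref{eqn:sigma.in.terms.of.g}, solving the linear system \eqref{eqn:linear.system} for the given $\nabla$.

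The first step I would carry out is to check that the operator on the left-hand side of \eqref{eqn:linear.system}, namely $\sigma^{bc}\mapsto\nabla_a\sigma^{bc}-\tfrac1{n+1}\big(\delta^c_a\nabla_i\sigma^{ib}+\delta^b_a\nabla_i\sigma^{ic}\big)$ acting on sections of $S^2(TM)\otimes(\Lambda^n(M))^{2/(n+1)}$, does not depend on the choice of connection within the projective class of $\nabla$. This is precisely the computation that pins down the exponent $\tfrac2{n+1}$: under a projective change $\Gamma\to\bar\Gamma=\Gamma+\delta\Upsilon+\Upsilon\delta$ the two $S^2(TM)$-slots produce a term $+2\Upsilon_a\sigma^{bc}$ while the density slot of weight $\tfrac2{n+1}$ produces $-2\Upsilon_a\sigma^{bc}$, so that $\bar\nabla_a\sigma^{bc}=\nabla_a\sigma^{bc}+\delta^b_a\Upsilon_d\sigma^{dc}+\delta^c_a\Upsilon_d\sigma^{bd}$; contracting $a$ with $c$ gives $\bar\nabla_i\sigma^{ib}=\nabla_i\sigma^{ib}+(n+1)\Upsilon_d\sigma^{db}$, and substituting both into the operator makes all $\Upsilon$-terms cancel by symmetry of $\sigma$.

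With invariance established, the ``only if'' direction is immediate: if $g\in[\nabla]$ then the operator for $\nabla$ agrees with the operator for $\nabla^g$; but $\sigma^{bc}$ is an algebraic combination of $g$ (which is $\nabla^g$-parallel) and the metric volume density (also $\nabla^g$-parallel), so $\nabla^g_a\sigma^{bc}=0$ and the operator for $\nabla^g$, hence for $\nabla$, annihilates $\sigma$. For the converse, suppose $\sigma^{bc}$ built from $g$ solves \eqref{eqn:linear.system} for $\nabla$. Then $\nabla_a\sigma^{bc}=\delta^b_a\tau^c+\delta^c_a\tau^b$ with $\tau^b:=\tfrac1{n+1}\nabla_i\sigma^{ib}$; since $\sigma^{bc}$ is nondegenerate I can define $\Upsilon_a$ by $\Upsilon_d\sigma^{db}=-\tau^b$ and set $\bar\Gamma^c_{ab}:=\Gamma^c_{ab}+\delta^c_a\Upsilon_b+\delta^c_b\Upsilon_a$, which is torsion-free and, by the transformation formula derived above, satisfies $\bar\nabla_a\sigma^{bc}=0$. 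From $\bar\nabla\sigma=0$ one extracts $\bar\nabla|\det\sigma|=0$, hence $\bar\nabla$ annihilates $g^{bc}=|\det\sigma|\,\sigma^{bc}$, i.e.\ \eqref{eqn:g.in.terms.of.sigma}, and therefore $g_{bc}$; a torsion-free metric connection is the Levi-Civita connection, so $\bar\nabla=\nabla^g$, which is projectively equivalent to $\nabla$ by construction, giving $g\in[\nabla]$.

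The main obstacle is the density bookkeeping: one must verify carefully that the weight $\tfrac2{n+1}$ is exactly what cancels the $\Upsilon_a\sigma^{bc}$ contributions, both in the invariance computation and in checking $\bar\nabla_a\sigma^{bc}=0$ for the chosen $\Upsilon$, and that the scalar consequence $\bar\nabla|\det\sigma|=0$ genuinely follows from $\bar\nabla\sigma=0$ — most transparently by noting that $|\det\sigma|^{-1}$ is, up to sign, a power of $|\det g|$ and differentiating via the cofactor expansion, or by observing that $\sigma$ canonically determines a $\bar\nabla$-parallel volume density. Everything else is routine index manipulation; as an alternative one could run the entire argument in a fixed coordinate system by expanding \eqref{eqn:Chris} and \eqref{eqn:non.lin.syst} directly, but the connection-invariant formulation keeps the computation short.
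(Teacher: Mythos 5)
Your proposal is correct. Note, however, that the paper itself gives no proof of this proposition: it is quoted from the reference \cite{eastwood_2008}, and your argument is essentially the standard one from that source. The two key points — projective invariance of the trace-adjusted operator acting on sections of $S^2(M)\otimes(\Lambda^n(M))^{\frac{2}{n+1}}$ (your weight count $+2\Upsilon_a\sigma^{bc}$ from the two tangent slots against $-2\Upsilon_a\sigma^{bc}$ from the density slot is exactly right, as is the contraction $\bar\nabla_i\sigma^{ib}=\nabla_i\sigma^{ib}+(n+1)\Upsilon_d\sigma^{db}$), and the fact that $\sigma^{bc}=|\det g|^{\frac{1}{n+1}}g^{bc}$ is parallel for the weighted Levi-Civita connection of $g$ — are precisely what make both implications work, and your converse (solve for $\Upsilon$ using nondegeneracy of $\sigma$, pass to the projectively equivalent connection $\bar\nabla$ with $\bar\nabla\sigma=0$, deduce $\bar\nabla g=0$ and hence $\bar\nabla=\nabla^g$) is sound, including the step $\bar\nabla|\det\sigma|=0$. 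One caveat worth making explicit: the proposition as printed says that $\Gamma^k_{ij}$ are the Christoffel symbols of the Levi-Civita connection of $g$, under which reading the displayed expression would vanish identically for every metric and the statement would be vacuous; your implicit reading — that the operator is taken with respect to the given connection $\nabla$ (equivalently, any connection in its projective class, by the invariance you establish) — is the intended one and matches \cite{eastwood_2008}.
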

The metrisability of projective connections can also be given in terms of
differential invariants, see~\cite{bryant_2009} and~\cite{dunajski_2016} for
2-dimensional and 3-dimensional projective structures, respectively.
A characterisation of metrisability in terms of pseudo-holomorphic curves can
be found in~\cite{mettler_2021}.

We proceed with two definitions that follow the conventions in
\cite{manno_2019}.
\begin{definition}\label{def:Liouville.space.metr.space}
	The linear space of solutions of \eqref{eqn:linear.system} is denoted by
	\[
		\solSp=\solSp(g):=\left\{\sigma\in  S^2(M)\otimes
		(\Lambda^n(M))^{\frac{2}{n+1}}\,\,|\,\, \sigma=\sigma^{ij} \text{ is
		a 	solution to } \eqref{eqn:linear.system}\right\}\,.
	\]
We refer to $\solSp$ as the \emph{metrisation space}. Its dimension
is called the \emph{degree of mobility} of $g$.
If $\sigma\in\solSp$ is represented by a
matrix of rank $r$, we say that $\sigma$ is a \emph{rank-$r$ solution}. If
$r<n$, the solution is also called \emph{degenerate} or a \emph{lower-rank
solution}. If $r=n$, the solution is called \emph{full-rank solution}.
\end{definition}

\begin{proposition}\label{prop:g.inverse}
Let $g$ and $\bar g$ be two $n$-dimensional projectively equivalent metrics
on the same $n$-dimensional manifold. Then, if defined for $t_1,t_2\in\R$,
\begin{equation}\label{eqn:g.inverse}
\frac{
	\big(
		t_1\,\det(g)^{\frac{1}{n+1}}g^{-1}
		+t_2\,\det(\bar g)^{\frac{1}{n+1}}\bar g^{-1}
	\big)^{-1}
	}{
	\det\left(
			t_1\,\det(g)^{\frac{1}{n+1}}g^{-1}
			+t_2\,\det(\bar g)^{\frac{1}{n+1}}\bar g^{-1}
		\right)}
\end{equation}
is a  metric  that is projectively equivalent to $g$ and $\bar g$.
\end{proposition}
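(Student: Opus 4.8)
The plan is to exploit the linearity of the metrisability system \eqref{eqn:linear.system} from Proposition~\ref{prop:lin.system}. The key observation is that the substitution \eqref{eqn:sigma.in.terms.of.g} turns $g$ and $\bar g$ into solutions $\sigma = |\det g|^{1/(n+1)}g^{-1}$ and $\bar\sigma = |\det\bar g|^{1/(n+1)}\bar g^{-1}$ of the \emph{same} linear system, since $g$ and $\bar g$ are projectively equivalent and hence share a Levi-Civita connection $\nabla$ in their common projective class (more precisely, the system \eqref{eqn:linear.system} can be written with respect to $\nabla$, and both $\sigma$ and $\bar\sigma$ solve it). Therefore any linear combination $\sigma_t := t_1\,\sigma + t_2\,\bar\sigma = t_1\det(g)^{1/(n+1)}g^{-1} + t_2\det(\bar g)^{1/(n+1)}\bar g^{-1}$ is again a solution of \eqref{eqn:linear.system}, i.e.\ $\sigma_t \in \solSp(g)$.

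Next I would apply the inverse transformation \eqref{eqn:g.in.terms.of.sigma}: whenever $\sigma_t$ is non-degenerate (which is the meaning of the phrase ``if defined for $t_1,t_2\in\R$'' in the statement — one must restrict to those $(t_1,t_2)$ for which the relevant determinant is nonzero), the formula $g_t^{ij} = |\det(\sigma_t)|\,\sigma_t^{ij}$ recovers an honest (inverse) metric. Spelling this out: $g_t^{-1} = |\det(\sigma_t)|\,\sigma_t$, hence $g_t = |\det(\sigma_t)|^{-1}\,\sigma_t^{-1}$, which is exactly the expression \eqref{eqn:g.inverse} (up to the sign/absolute-value conventions, since $\det(\sigma_t^{-1}) = \det(\sigma_t)^{-1}$ so the scalar prefactor $|\det(\sigma_t)|^{-1}$ matches $1/\det(\sigma_t)$ on the domain where things are defined). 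By Proposition~\ref{prop:lin.system}, since $\sigma_t$ solves the linear system associated with the connection $\nabla$ in the common projective class $[g]=[\bar g]$, the metric $g_t$ lies in that projective class, i.e.\ it is projectively equivalent to both $g$ and $\bar g$.

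The only genuinely delicate points are bookkeeping ones rather than conceptual obstacles. First, one must be careful that \eqref{eqn:linear.system} is formulated using the Levi-Civita connection of $g$, yet we are claiming $\bar\sigma$ solves it; the resolution is that projective equivalence means $[g]$ contains $\bar g$, so by Proposition~\ref{prop:lin.system} applied to $\bar g$ (with $\nabla$ the Levi-Civita connection of $g$) we get precisely that $\bar\sigma$, defined via \eqref{eqn:sigma.in.terms.of.g} from $\bar g$, is a solution — and the same proposition applied to $g_t$ in the reverse direction yields the conclusion. Second, one has to track the absolute values and signs in $|\det g|^{1/(n+1)}$ versus $\det(g)^{1/(n+1)}$; on the open set where $\det(\sigma_t)\neq 0$ the metric $g_t$ is non-degenerate and the formulas are consistent, and it suffices to verify the identity \eqref{eqn:g.inverse} locally there. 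I expect the main (minor) obstacle to be this matching of determinant conventions and the verification that $\sigma_t^{-1}$ rescaled by $|\det\sigma_t|$ is symmetric and non-degenerate — all of which is routine linear algebra once the linearity argument above is in place.
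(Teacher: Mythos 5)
Your proposal is correct and follows the same route as the paper: pass to $\sigma$ and $\bar\sigma$ via~\eqref{eqn:sigma.in.terms.of.g}, use the linearity of System~\eqref{eqn:linear.system} to conclude that $t_1\sigma+t_2\bar\sigma$ is again a solution, and recover the metric via~\eqref{eqn:g.in.terms.of.sigma}. The paper's proof is just a terser version of this argument; your extra care with the determinant conventions and non-degeneracy is fine but not a point of divergence.
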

\begin{proof}
To the metrics $g,\bar g$ correspond, respectively, the tensors
$\sigma,\bar\sigma$ via formula \eqref{eqn:sigma.in.terms.of.g}.
Then the claim follows taking into account the linearity of System
\eqref{eqn:linear.system}.
\end{proof}

\subsection{Benenti tensors and projective equivalence}

\begin{definition}\label{def:benenti.tensor}
	Let $g,\bar{g}$ be two projectively equivalent metrics on the same 
	$n$-dimensional manifold~$M$. The $(1,1)$-tensor field
	\begin{equation}\label{eqn:benenti.tensor}
	L(g,\bar{g}) =
	\left|\frac{\det(\bar{g})}{\det(g)}\right|^{\frac{1}{n+1}}\,\bar{g}^{-1}g
	\end{equation}
	on $M$ is called the \emph{Benenti tensor} associated with $(g,\bar{g})$.
	In view of Formula~\eqref{eqn:g.in.terms.of.sigma}, we also set
	\begin{equation}\label{eqn:benenti.tensor.from.sigma}
		L(\sigma,\bar\sigma)=\bar\sigma\sigma^{-1}.
	\end{equation}
\end{definition}

\begin{proposition}
\label{prop:Riem.Benenti}
Let $g,\bar{g}$ be two projectively equivalent metrics on the same
$n$-dimensional manifold~$M$. Then $L=L(g,\bar{g})$ is self-adjoint
w.r.t.~$g$. In particular, if $g$ is a Riemannian metric, then $L(g,\bar{g})$ 
is
diagonalisable.
\end{proposition}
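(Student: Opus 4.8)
The plan is to prove self-adjointness of $L=L(g,\bar g)$ directly from its definition~\eqref{eqn:benenti.tensor} and the symmetry of the metrics, and then to deduce diagonalisability in the Riemannian case from the spectral theorem for self-adjoint operators on a Euclidean space. For the first part, write $L = c\,\bar g^{-1}g$ with $c=\left|\det(\bar g)/\det(g)\right|^{1/(n+1)}$ a (positive) scalar function; since multiplication by a scalar does not affect self-adjointness, it suffices to show that $\bar g^{-1}g$ is $g$-self-adjoint. Recall that a $(1,1)$-tensor $A$ is self-adjoint w.r.t.\ $g$ precisely when the bilinear form $g(A\cdot,\cdot)$ is symmetric. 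First I would compute, in coordinates, $g_{ih}(\bar g^{-1}g)^h_{\ j} = g_{ih}\,\bar g^{hk}g_{kj}$, and observe this expression is manifestly symmetric under $i\leftrightarrow j$ because both $g$ and $\bar g^{-1}$ are symmetric. Hence $g(L\cdot,\cdot) = c\, g\,\bar g^{-1}g$ is a symmetric bilinear form, which is exactly the statement that $L$ is self-adjoint w.r.t.\ $g$.

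For the second part, assume $g$ is Riemannian, i.e.\ positive definite. Fix a point $p\in M$; on the tangent space $T_pM$ equipped with the inner product $g_p$, the endomorphism $L_p$ is self-adjoint by the first part. The classical spectral theorem then guarantees an orthonormal (w.r.t.\ $g_p$) basis of $T_pM$ consisting of eigenvectors of $L_p$; in particular $L_p$ is diagonalisable over $\mathbb{R}$. Since $p$ was arbitrary, $L$ is pointwise diagonalisable, which is the assertion. (If one wants a genuinely smooth diagonalisation near points where the eigenvalue multiplicities are locally constant, this follows from standard perturbation theory for self-adjoint operators, but the proposition as stated only asserts pointwise diagonalisability and does not require this refinement.)

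I do not expect a serious obstacle here: the only subtlety is bookkeeping with the conformal factor $c$ and with $\det$, and making sure the sign/absolute value in~\eqref{eqn:benenti.tensor} does not interfere — it does not, since $c>0$ and rescaling by a positive scalar preserves both self-adjointness and the reality of the spectrum. One should perhaps remark that the same computation shows $L$ is also self-adjoint w.r.t.\ $\bar g$, since $\bar g(L\cdot,\cdot) = c\,\bar g\,\bar g^{-1}g = c\,g$ is symmetric as well; this symmetry between $g$ and $\bar g$ is conceptually the reason the construction is natural. In the non-Riemannian (indefinite) case the spectral theorem fails in general, so one can only conclude self-adjointness, not diagonalisability — which is why the diagonalisability clause is explicitly restricted to the Riemannian setting.
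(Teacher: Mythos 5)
Your proposal is correct and follows essentially the same route as the paper: both show that the bilinear form $gL = f\,g\bar g^{-1}g$ is symmetric (using symmetry of $g$ and $\bar g^{-1}$, with the positive scalar factor playing no role), and then invoke the spectral theorem pointwise for the Riemannian case. No gaps.
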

\begin{proof}
It will be sufficient to prove that the matrix $(g L)_{ik}=g_{ij}L\indices{^j_k}$ is symmetric. Indeed, setting $f=\left|\frac{\det(\bar{g})}{\det(g)}\right|^{\frac{1}{n+1}}$ and denoting by $A^T$ the transpose of matrix $A$, we have that
$$
(gL)^T=(fg\bar{g}^{-1}g)^T=fg^T(\bar{g}^{1})^Tg^T=fg\bar{g}^{-1}g=gL\,.
$$
\end{proof}

\begin{remark}\label{rem:gLk.symmetric}
By induction, $gL^k$ is symmetric for any $k\in\mathbb{N}$. Indeed, supposing $gL^{k-1}$ symmetric, we have that
$$
	(gL^k)^T
	=(gL^{k-1}L)^T
	=L^TgL^{k-1}
	=(gL)^TL^{k-1}
	=(gL)L^{k-1}
	=gL^k\,.
$$
\end{remark}

\noindent If $g$ is of mixed signature, $L(g,\bar{g})$ is not necessarily
diagonalisable. For instance, consider
\[
	g=2(y^2+x)dxdy\text{ and }
	\bar{g}=-2\frac{y^2+x}{y^3}dxdy+\frac{(y^2+x)^2}{y^4}dy^2\,.
\]
These Lorentzian metrics are studied in \cite{matveev_2012}. 
The metrics $g$ and $\bar g$ are projectively equivalent as their projective 
connection is (see~\eqref{eqn:projective.connection.2.dim})
\[
	y_{xx}=\frac{1}{y^2+x}y_x-\frac{2y}{y^2+x} y_x^2\,,
\]
and $L(g,\bar{g})$ is not diagonalisable because
\[
	L(g,\bar{g})=
	-\left(
	\begin{array}{cc}
	y & y^2+x
	\\
	0 & y
	\end{array}
	\right)\,.
\]
\noindent We quote the following proposition, which follows from a more
general statement in the reference.
\begin{proposition}[\cite{bolsinov_2011}, Theorem 1]\label{prop:integrability}
Let $g$ and $\bar{g}$ be projectively equivalent  metrics. Let us assume that $L(g,\bar{g})$ is diagonalisable. Then the eigendistributions of $L(g,\bar{g})$ are integrable.
\end{proposition}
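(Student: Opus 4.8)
The plan is to exploit the fact, established in Proposition~\ref{prop:Riem.Benenti} and Remark~\ref{rem:gLk.symmetric}, that $gL^k$ is symmetric for every $k$, and to combine this with the linear PDE system~\eqref{eqn:linear.system} satisfied by $\sigma$ (equivalently $\bar\sigma$) to control the covariant derivatives of $L$. First I would record the differential identity that the pair $(\sigma,\bar\sigma)\in\solSp$ imposes on $L=\bar\sigma\sigma^{-1}$: differentiating $L=L(\sigma,\bar\sigma)$ and using \eqref{eqn:linear.system} for both $\sigma$ and $\bar\sigma$, one obtains an expression for $\nabla_a L\indices{^b_c}$ that, after lowering an index with $g$, is of the shape $\nabla_a(gL)_{bc}=\lambda_b\, g_{ac}+\lambda_c\,g_{ab}$ for a suitable covector $\lambda$ (this is the classical ``fundamental equation'' for projectively equivalent metrics, cf.\ Sinjukov/Benenti; it follows purely algebraically from the two copies of~\eqref{eqn:linear.system}). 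The key structural consequence I want is that $\nabla L$ at a point is determined by $L$ and by the single covector $\lambda$, and in particular that $\nabla_a L\indices{^b_c}$ vanishes whenever it is contracted appropriately against an eigenvector of $L$ lying in the ``wrong'' eigenspace.

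Next, fix an open set on which $L$ is diagonalisable and the multiplicities of its eigenvalues are locally constant (a dense open subset; integrability is a closed condition so it suffices to prove it there). Let $\rho$ be an eigenvalue with eigendistribution $D_\rho=\ker(L-\rho\,\mathrm{Id})$. To show $D_\rho$ is integrable I would take two local vector fields $U,V$ that are sections of $D_\rho$ and check that $[U,V]$ again annihilates $L-\rho\,\mathrm{Id}$, i.e.\ that for any eigenvector $W$ of a different eigenvalue $\mu\ne\rho$ one has $g([U,V],W)=0$ after pairing through $g$ (using that $g$ is nondegenerate and that distinct eigenspaces of the $g$-self-adjoint operator $L$ are $g$-orthogonal, so it is enough to test against the complementary eigenspaces). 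Using $[U,V]=\nabla_U V-\nabla_V U$ and the Leibniz rule on $LU=\rho U$, $LV=\rho V$, the terms involving $\nabla\rho$ cancel by antisymmetrisation, and what remains is controlled by the fundamental equation $\nabla_a(gL)_{bc}=\lambda_b g_{ac}+\lambda_c g_{ab}$: contracting this identity against $U^a V^b W^c$ and its $U\leftrightarrow V$ swap, and using $\rho\ne\mu$, forces the relevant component of $[U,V]$ to vanish. This is essentially a repackaging of the Nijenhuis-torsion argument: the fundamental equation says the Nijenhuis torsion of $L$ is a very specific (in fact, in the right sense, vanishing) expression, and vanishing Nijenhuis torsion together with pointwise diagonalisability gives integrability of each eigendistribution.

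The main obstacle, and the place that needs care rather than routine calculation, is the bookkeeping at points where eigenvalues collide or where an eigenvalue is constant on an open set: there the covector $\lambda$ and the eigenvalue differentials interact, and one must argue that proving integrability on the open dense set where the eigenvalue structure is regular already yields integrability everywhere (by continuity of the eigendistributions on each stratum, or simply because the theorem is only claimed where $L$ is diagonalisable and the relevant distribution is well defined). A second point of care is that, since $g$ need not be Riemannian, $L$ being diagonalisable does not make distinct eigenspaces automatically transverse in a naive sense — but $g$-self-adjointness still gives $g$-orthogonality of eigenspaces with distinct eigenvalues, which is exactly what the pairing argument above uses, so nondegeneracy of $g$ on each eigenspace (which may fail in mixed signature only if an eigenspace is $g$-isotropic, and that is incompatible with diagonalisability of a $g$-self-adjoint operator) closes the gap. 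Finally I would note that this is precisely the special case of \cite[Theorem~1]{bolsinov_2011} quoted here, so a shorter route is simply to cite that theorem; the self-contained argument above is the one I would write out if an independent proof were wanted.
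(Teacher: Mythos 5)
Your proposal is correct in substance, but it is worth pointing out that the paper does not prove this proposition at all: it is quoted verbatim from \cite{bolsinov_2011} (Theorem 1 there), with the remark that it follows from a more general statement in that reference --- exactly the ``shorter route'' you mention in your last sentence. Your self-contained sketch is the classical Sinjukov/Benenti argument and it does go through: from the two copies of \eqref{eqn:linear.system} one gets the fundamental equation $\nabla_c a_{ij}=\lambda_i g_{jc}+\lambda_j g_{ic}$ for $a=gL$ (symmetric by Proposition~\ref{prop:Riem.Benenti}); then, for sections $U,V$ of $D_\rho$ and $W$ a section of the $g$-orthogonal complement of $D_\rho$, antisymmetrising $(\nabla_U a)(V,W)$ in $U,V$ and using $a(U,W)=a(V,W)=0$ together with metricity of $\nabla$ yields $g\bigl((L-\rho\,\mathrm{Id})[U,V],W\bigr)=0$, and since $L-\rho\,\mathrm{Id}$ is invertible on $D_\rho^{\perp}$ and $g$ restricts nondegenerately to each eigenspace (as you correctly argue from $g$-self-adjointness plus diagonalisability, even in mixed signature), this forces $[U,V]\in D_\rho$. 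The two caveats you raise are the right ones: the pointwise bracket computation only needs the eigenvalue structure to be locally regular, and where $D_\rho$ fails to be a smooth constant-rank distribution the statement has no content, so the restriction to the regular stratum is harmless; your appeal to ``integrability is a closed condition'' is dispensable once this is said. Comparing the two routes: citing \cite{bolsinov_2011} buys more than integrability --- Theorem 1 there also provides the adapted product coordinates that the paper exploits in Proposition~\ref{prop:Levi-Civita.normal.forms} --- whereas your argument has the merit of being short, elementary and independent of the splitting--gluing machinery, at the cost of having to state and derive the fundamental equation, which the paper never writes down explicitly.
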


\subsection{Metrisability and the degree of mobility in dimension 3}

\begin{proposition}[\cite{kiosak_2010}, Corollary 3 p. 413]\label{prop:kiosak.matveev}
	The degree of mobility of a 3-dimensional metric $g$ of non-constant
	curvature is at most 2.
\end{proposition}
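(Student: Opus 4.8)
The plan is to turn the linear system~\eqref{eqn:linear.system} into a closed first-order system — a linear connection on a vector bundle whose rank I can compute — and then to squeeze the bound out of that connection's integrability conditions, which in dimension~$3$ are controlled by the projective curvature of $g$.

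First I would \emph{prolong} \eqref{eqn:linear.system} in the usual way. With $n=3$ and $\mu^b:=\tfrac14\,\nabla_i\sigma^{ib}$ the system becomes $\nabla_a\sigma^{bc}=\delta^b_a\mu^c+\delta^c_a\mu^b$; differentiating once more and taking the relevant trace yields an equation of the form $\nabla_a\mu^b=\delta^b_a\rho+(\text{terms linear in }\sigma\text{ with coefficients built from the curvature of }g)$ for a new scalar density $\rho$, and one further differentiation closes the system as $\nabla_a\rho=(\text{terms linear in }\mu\text{ with coefficients built from the Schouten tensor of }g)$. Hence the solutions of \eqref{eqn:linear.system} are exactly the parallel sections of an induced linear connection $\widetilde\nabla$ on a bundle $E$ of rank $\binom{4}{2}+3+1=\binom{5}{2}=10$, so a priori the degree of mobility is at most $10$, with equality precisely when $\widetilde\nabla$ is flat, i.e.\ when the projective structure of $g$ is flat, i.e.\ — by Beltrami's theorem — when $g$ has constant curvature.

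To bring the bound down to $2$ I would examine the curvature $\widetilde R$ of $\widetilde\nabla$: its blocks are algebraic expressions in the projective curvature obstruction of $g$ (the projective Weyl/Cotton tensor $W$) contracted against the slots $(\sigma,\mu,\rho)$ of a section, and $W\equiv0$ is equivalent to $g$ having constant curvature. Now if $\dim\solSp(g)\ge3$, the holonomy of $\widetilde\nabla$ fixes a subspace $V\subseteq E$ with $\dim V\ge3$, so $\widetilde R$ annihilates $V$ at every point; evaluating against three linearly independent parallel jets — one of which may be taken to be the full-rank jet coming from $g$ itself, so that its $\sigma$-slot is invertible — yields, pointwise, a linear system on $W$. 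The crux is the pointwise linear-algebra step: in dimension~$3$, using the algebraic relations among $\sigma$, $\mu$ and $\rho$ imposed by the prolongation together with the index symmetries of $W$, the vanishing of those contractions forces $W\equiv0$. Then $g$ is projectively flat, hence of constant curvature, contradicting the hypothesis, so $\dim\solSp(g)\le2$.

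I expect the main obstacle to be exactly that last step — deducing $W=0$ from three independent parallel sections killing $\widetilde R$ — since this is where the dimension being $3$ is genuinely used (for $n\ge4$ the analogous deduction fails, and non-constant-curvature metrics of higher degree of mobility do occur) and it requires careful bookkeeping of the symmetries of $W$ alongside the auxiliary identities for $\sigma,\mu,\rho$. A secondary point, handled by a continuity argument, is to propagate the bound across the locus where some solution $\sigma$ drops rank, so that it holds on all of the manifold and not merely on the open set where non-proportional projectively equivalent metrics exist.
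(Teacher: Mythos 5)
First, note that the paper does not prove this proposition at all: it is imported verbatim from the literature (Kiosak--Matveev, Corollary~3, p.~413 of the cited reference), so there is no internal proof to compare against; your proposal would have to stand on its own as a complete argument, and it does not yet do so.

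Your general framework is the standard one (and correct as far as it goes): prolonging the metrisability equation~\eqref{eqn:linear.system} to a linear connection on a rank-$10$ bundle with slots $(\sigma^{bc},\mu^b,\rho)$, identifying solutions with parallel sections, and using that flatness of this connection is equivalent to projective flatness, hence (Beltrami plus metrisability) to constant curvature. The genuine gap is the decisive step, which you yourself label ``the crux'' and then only assert: that in dimension $3$ the existence of three linearly independent parallel sections forces the projective Weyl tensor $W$ to vanish. The mechanism you propose for it --- curvature of the prolongation connection annihilates the parallel sections, then ``pointwise linear algebra'' on the resulting contractions kills $W$ --- is not adequate. Pointwise, the integrability condition is a linear condition on $W$ and the Cotton-type tensor contracted with the three (symmetric, possibly degenerate) $\sigma$-slots and the $\mu$-slots, and three such sections do not algebraically span enough of the bundle fibre to force $W=0$ at a point; the known proofs (Kiosak--Matveev, and the related tractor/prolongation treatments) must differentiate the integrability conditions, invoke Bianchi identities, and in the degree-of-mobility-$\ge 3$ situation derive the existence of a constant $B$ governing an extended system for all solutions before any conclusion about $W$ can be drawn --- this differential step is also exactly where the hypothesis $n=3$ (as opposed to $n\ge 4$, where non-constant-curvature metrics with degree of mobility $\ge 3$ exist) actually enters, and your sketch does not show where or how. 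Until that step is carried out, the proposal is a plausible plan of attack but not a proof; as it stands, the honest course is what the paper does, namely cite Kiosak--Matveev.
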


\begin{remark}\label{rem:degree.mob.1}
	In case the degree of mobility is 1, any metric $\bar{g}$ projectively 
	equivalent to $g$ is a constant multiple of $g$, i.e.~$\bar{g}=c\,g$ with 
	$c\ne0$.
	Thus, for such $g$, any projective vector field $w\ne0$ is homothetic. In 
	this case, in a sufficiently small neighborhood around a non-singular 
	point of $w$, we may thus choose coordinates $(x,y,z)$ such that 
	$w=\partial_x$, and this choice implies that the metric $g$ can locally 
	be written as in~\eqref{eqn:homothetic.normal.form}.
\end{remark}

The following lemma is useful for the understanding eigendistributions of
Benenti tensors $L(g,\bar{g})$ for 3-dimensional metrics.
\begin{lemma}\label{lemma:indep.mult.eigen}
	Let $g$ be an $n$-dimensional metric of non-constant curvature with
	degree of mobility $2$. Let~$\bar{g}$ be a metric projectively equivalent
	to $g$ such that $\bar{g}\neq c g$, $\forall\, c\in\mathbb{R}$.
	Then the number and multiplicity of distinct eigenvalues of the Benenti
	tensor $L=L(g,\bar{g})$ does not depend on the choice of $\bar{g}$.
\end{lemma}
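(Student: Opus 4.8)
The plan is to exploit the fact that, by Proposition~\ref{prop:kiosak.matveev}, when the degree of mobility equals $2$ the metrisation space $\solSp(g)$ is a two-dimensional linear space containing (the tensor density associated to) $g$. Any metric $\bar g$ with $\bar g\neq cg$ corresponds, via \eqref{eqn:sigma.in.terms.of.g}, to a tensor density $\bar\sigma\in\solSp(g)$ that together with $\sigma$ (the density of $g$) forms a basis of $\solSp$. Hence every admissible $\bar g$ produces a $\bar\sigma$ of the form $\bar\sigma = \alpha\,\sigma + \beta\,\sigma_0$ with $\beta\neq0$, where $\sigma_0$ is a fixed second basis vector. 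Via \eqref{eqn:benenti.tensor.from.sigma} the Benenti tensor is $L(\sigma,\bar\sigma)=\bar\sigma\sigma^{-1}$, so all the Benenti tensors arising from admissible $\bar g$ are of the form $L' = \alpha\,\Id + \beta\,L_0$ with $\beta\neq0$, where $L_0=\sigma_0\sigma^{-1}$ is one fixed such tensor. This is the conceptual heart of the argument.

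The key steps, in order, are: (i) Fix one admissible $\bar g_0$ (it exists by hypothesis) and set $L_0 = L(g,\bar g_0)$; pick the corresponding $\sigma,\sigma_0$ as a basis of the $2$-dimensional space $\solSp(g)$. (ii) Given any other admissible $\bar g$, write its density $\bar\sigma$ in this basis as $\bar\sigma=\alpha\sigma+\beta\sigma_0$; observe $\beta\neq0$, since $\beta=0$ would force $\bar g$ proportional to $g$, contradicting $\bar g\neq cg$. (iii) Conclude $L(g,\bar g)=\bar\sigma\sigma^{-1}=\alpha\,\Id+\beta\,L_0$ pointwise. (iv) Note that $\alpha,\beta$ are a priori functions on $M$; but an affine-in-$L_0$ relation with \emph{functional} coefficients would still preserve the partition of $M$ according to the multiplicities of the eigenvalues of $L_0$: indeed, at each point $p$, the eigenvalues of $L(g,\bar g)_p$ are exactly $\alpha(p)+\beta(p)\lambda_i(p)$ where $\lambda_i(p)$ are the eigenvalues of $(L_0)_p$, and since $\beta(p)\neq0$ this is a bijection between the two eigenvalue sets which preserves multiplicities and, crucially, the dimensions of the generalized eigenspaces (Jordan block structure is unchanged by an invertible affine substitution $t\mapsto\alpha+\beta t$). (v) Therefore the number of distinct eigenvalues of $L(g,\bar g)$ at each point, together with their multiplicities (algebraic and geometric), coincides with that of $L_0$, independently of the choice of $\bar g$; as these are the quantities the statement refers to, we are done.

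I expect the main obstacle to be making step~(iv) fully rigorous, i.e. handling the fact that $\alpha$ and $\beta$ are functions rather than constants and that "the number and multiplicity of distinct eigenvalues" must be interpreted correctly (it may jump on a singular locus, but the claim is that this stratification is the same for all $\bar g$). The clean way around this is to work pointwise: fix $p\in M$, let $\lambda_1,\dots,\lambda_k$ be the distinct eigenvalues of $(L_0)_p$ with algebraic multiplicities $m_1,\dots,m_k$ (and with prescribed Jordan structure); then $L(g,\bar g)_p$ has exactly $k$ distinct eigenvalues $\alpha(p)+\beta(p)\lambda_j$ with the same multiplicities $m_j$ and the same Jordan structure, because conjugation-invariant data of a matrix is unchanged under $A\mapsto \alpha\Id+\beta A$ for $\beta\neq0$. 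Since $k$ and the $m_j$ depend only on $p$ and on $L_0$ — not on $\alpha(p),\beta(p)$ — and since any two admissible choices $\bar g,\bar g'$ give Benenti tensors that are each affine-in-$L_0$ with nonvanishing linear coefficient, they share this data at every point. One small auxiliary point to verify is that $\bar\sigma$ is indeed nonproportional to $\sigma$ as a tensor density iff $\bar g$ is nonproportional to $g$ as a metric, which follows at once from the bijectivity of the correspondence \eqref{eqn:sigma.in.terms.of.g}–\eqref{eqn:g.in.terms.of.sigma} and the fact that $|\det(\cdot)|^{1/(n+1)}$ is multiplicative; this is routine.
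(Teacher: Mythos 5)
Your proposal is correct and follows essentially the same route as the paper: express $\bar\sigma$ in a basis of the $2$-dimensional metrisation space containing $\sigma$, observe that the coefficient of the second basis element is non-zero, and conclude that $L(g,\bar g)$ is an invertible affine transform $\alpha\,\Id+\beta\,L_0$ of a fixed Benenti tensor, which preserves the number and multiplicities of eigenvalues. The only remark is that your caution in step (iv) is unnecessary (and your pointwise non-vanishing of $\beta$ would not actually follow from mere non-proportionality if $\beta$ were a function): since $\solSp$ is a finite-dimensional \emph{real} vector space of solutions of the linear system~\eqref{eqn:linear.system}, the coefficients $\alpha,\beta$ in the basis decomposition are real constants, exactly as in the paper's proof, so the affine relation holds with constant coefficients and the conclusion is immediate.
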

\begin{proof}
	The sought functions are solutions of $\det(L-s\,\Id)=0$. Set
	$\sigma^{ij}=|\det(g)|^{\frac{1}{n+1}}g^{ij}$ and $\bar{\sigma}$
	analogously (cf. \eqref{eqn:sigma.in.terms.of.g}).
	We now replace the metric $\bar{g}$, using
	$\hat{g}=|\det(\hat\sigma)|^{-1}\hat{\sigma}^{-1}$ instead. Since we
	assume that the degree of mobility is equal to $2$, there exist two real
	constants $c_1,c_2$ such that
	\[
	\hat\sigma = c_1\sigma+c_2\bar{\sigma}\,,\quad \text{with}\,\,c_2\neq 0\,.
	\]
	As a consequence, we replace $L=L(g,\bar{g})$ by
	$\hat L=L(g,\hat{g})=\hat{\sigma}\sigma^{-1}=c_1 \Id+c_2L$.
	Therefore, a solution~$t$ of $\det(\hat L-t\,\Id)=0$ is in one-to-one
	correspondence with a solution of $\det(L-s\,\Id)=0$. Indeed,
	\[
	0 = \det(\hat L-t\,\Id)
	= c_2^n\,\det\left( L+\frac{1}{c_2}(c_1-t)\Id\right)\,,
	\]
	and thus we identify $s=\frac{c_1}{c_2}-\frac{t}{c_2}$. Note that the
	eigenvalues themselves change, but their number and multiplicity is
	preserved.
\end{proof}

\begin{corollary}
Let $g$ be a metric on a  $3$-dimensional manifold.
Let $\bar g$ and $\hat g$ be metrics on the same manifold and
projectively equivalent with $g$.
Assume $g$, $\bar g$ and $\hat g$ are pairwise non-proportional.
Then the number and multiplicity of eigenvalues of the Benenti tensor
$L(g,\bar g)$ corresponds with those of $L(g,\hat g)$.
\end{corollary}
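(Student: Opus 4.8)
The plan is to read the statement as a direct consequence of Lemma~\ref{lemma:indep.mult.eigen}, once the degree of mobility of $g$ has been pinned down to $2$. First I would observe that the mere existence of a metric $\bar g$ projectively equivalent to $g$ but not proportional to it already forces $\dim\solSp(g)\ge 2$. Indeed, the tensors $\sigma$ and $\bar\sigma$ associated with $g$ and $\bar g$ via~\eqref{eqn:sigma.in.terms.of.g} both solve the linear system~\eqref{eqn:linear.system}, and they are linearly independent: if $\bar\sigma$ were a scalar multiple of $\sigma$, then by~\eqref{eqn:g.in.terms.of.sigma} $\bar g$ would be a constant multiple of $g$, contradicting the non-proportionality hypothesis. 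The same remark applies to $\hat\sigma$, so in fact $\sigma,\bar\sigma,\hat\sigma$ span at least a $2$-dimensional subspace of $\solSp(g)$.

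Next I would pin down the degree of mobility exactly. Since the metrics relevant to this paper (and to the reduction performed after Proposition~\ref{prop:kiosak.matveev}) are of non-constant curvature, I would add this hypothesis on $g$; under it, Proposition~\ref{prop:kiosak.matveev} upgrades the inequality above to $\dim\solSp(g)=2$. Now the hypotheses of Lemma~\ref{lemma:indep.mult.eigen} are satisfied by the pair $(g,\bar g)$ and, equally, by the pair $(g,\hat g)$, since both $\bar g$ and $\hat g$ are projectively equivalent to $g$ and are not constant multiples of $g$. The lemma asserts that the number of distinct eigenvalues of the Benenti tensor, together with their multiplicities, is an invariant of $g$ alone — it does not depend on the second metric used to build the tensor. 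Applying this once to $L(g,\bar g)$ and once to $L(g,\hat g)$ shows that both tensors realise this common number and multiplicity pattern, which is exactly the claim.

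I do not expect a genuine obstacle: the corollary is essentially a repackaging of Proposition~\ref{prop:kiosak.matveev} and Lemma~\ref{lemma:indep.mult.eigen}, and the argument is the one Lemma~\ref{lemma:indep.mult.eigen} was designed to support (passing between $\bar g$ and $\hat g$ through the intermediate combination $c_1\sigma+c_2\bar\sigma$). The only point that requires a moment's attention is the reduction to $\dim\solSp(g)=2$, i.e.\ ensuring that the constant-curvature case is set aside (it is handled classically and separately), after which the two-dimensionality of $\solSp(g)$ makes everything immediate.
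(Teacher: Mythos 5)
Your argument is correct and is essentially the paper's own proof, which simply cites Lemma~\ref{lemma:indep.mult.eigen} together with Proposition~\ref{prop:kiosak.matveev}; you merely spell out the intermediate steps (linear independence of $\sigma,\bar\sigma$ forcing $\dim\solSp\ge2$, then equality via Proposition~\ref{prop:kiosak.matveev}, then two applications of the lemma). Your observation that the non-constant-curvature hypothesis must be added (or understood implicitly) to invoke Proposition~\ref{prop:kiosak.matveev} and Lemma~\ref{lemma:indep.mult.eigen} is a fair and correct reading of how the paper intends the corollary to be used.
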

\begin{proof}
This follows from Lemma~\ref{lemma:indep.mult.eigen} in combination with
Proposition~\ref{prop:kiosak.matveev}.
\end{proof}

\begin{lemma}\label{la:full.rank.basis}
	There is a metrisable $3$-dimensional projective connection such that
	$\dim\solSp=2$ if and only if there is a rank-3 basis
	$(\sigma,\bar\sigma)$ of $\solSp$.
\end{lemma}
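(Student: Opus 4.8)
The statement is an "if and only if": a metrisable $3$-dimensional projective connection with $\dim\solSp=2$ exists precisely when $\solSp$ admits a basis $(\sigma,\bar\sigma)$ with both elements of rank $3$. The "if" direction is essentially immediate: if $(\sigma,\bar\sigma)$ is a rank-$3$ basis of $\solSp$, then $\sigma$ (or $\bar\sigma$) corresponds, via the substitution \eqref{eqn:g.in.terms.of.sigma}, to a genuine (non-degenerate) metric $g$, whose associated projective connection \eqref{eqn:proj.conn.gen.multidim} is then metrisable by construction, and since $\solSp(g)$ is $2$-dimensional by hypothesis, we are done. The content is in the "only if" direction: given a metrisable projective connection with $\dim\solSp=2$, I must produce a basis of $\solSp$ consisting of two rank-$3$ (full-rank) solutions.

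The plan for the "only if" direction is as follows. By metrisability there is at least one non-degenerate $\sigma\in\solSp$, i.e.\ a rank-$3$ solution; pick any such $\sigma$ and complete it to a basis $(\sigma,\tau)$ of the $2$-dimensional space $\solSp$. If $\tau$ already has rank $3$, take $\bar\sigma=\tau$ and stop. Otherwise $\tau$ is degenerate (rank $\le 2$), and the idea is to replace $\tau$ by a generic linear combination $\bar\sigma = \tau + \lambda\,\sigma$ and show that for all but finitely many $\lambda\in\R$ this has rank $3$. Indeed $\det(\tau+\lambda\sigma)$ is a polynomial in $\lambda$ of degree $3$ whose leading coefficient is $\det(\sigma)\ne 0$, hence it is not the zero polynomial and vanishes for at most $3$ values of $\lambda$; choosing $\lambda$ outside this finite set makes $\bar\sigma$ a rank-$3$ solution, and $(\sigma,\bar\sigma)$ is still a basis since $\bar\sigma\notin\spanned(\sigma)$ (as $\tau\notin\spanned(\sigma)$ and we only added a multiple of $\sigma$). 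This yields the desired rank-$3$ basis.

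The main subtlety — and the only place one must be a little careful — is whether $\det(\tau+\lambda\sigma)$ could possibly be the zero polynomial in $\lambda$. This cannot happen precisely because its $\lambda^3$-coefficient equals $\det\sigma$, which is nonzero by the choice of $\sigma$ as a non-degenerate solution; so the polynomial is honestly cubic and in particular nonzero. (Note that the entries of $\sigma$ and $\tau$, hence the coefficients of this polynomial, vary from point to point, but at each point the leading coefficient is nonzero, so at each point only finitely many $\lambda$ are excluded; one then uses that $\sigma$ is non-degenerate on the whole relevant neighbourhood, which is part of what it means to be a rank-$3$ solution in Definition~\ref{def:Liouville.space.metr.space}, to select a single $\lambda$ that works throughout — alternatively one simply works on the open dense set where $\bar\sigma$ is non-degenerate, which suffices for the local statements used later in the paper.) With this observation the argument is complete.
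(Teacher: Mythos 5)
Your proof is correct and follows essentially the same route as the paper: both arguments start from one full-rank solution $\sigma$ guaranteed by metrisability and then exploit that degeneracy is a non-generic condition in the $2$-dimensional space $\solSp$ to find a second, non-proportional full-rank solution. The paper phrases this as openness of the set $\solSp\setminus\{\sigma\,|\,\det\sigma=0\}$, while you make it explicit by noting that $\det(\tau+\lambda\sigma)$ is a cubic in $\lambda$ with leading coefficient $\det\sigma\ne0$, so only finitely many $\lambda$ fail at a given point; this is just a concrete instance of the same idea, and your handling of the point-dependence (shrinking to a neighbourhood where the chosen combination stays non-degenerate) matches the paper's implicit local convention.
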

\begin{proof}
	The backwards direction is trivial.
	Thus let us assume $\dim\solSp=2$ and the existence of a rank-3 solution
	$\sigma$ (otherwise the projective connection would not be metrisable).
	Then, since~\eqref{eqn:linear.system} is linear, all multiples $k\sigma$
	with $k\ne0$ are rank-3 solutions.
	Now, since the condition $\rank(\sigma)<3$ is a closed condition, the set
	$\solSp\setminus\{\sigma|\det\sigma=0\}$ is open.
	Thus,
there exists another rank-3 solution, say
	$\bar\sigma$, that is not proportional to $\sigma$.
	Since $\dim\solSp=2$ the pair $(\sigma,\bar\sigma)$ forms a rank-3 basis
	of $\solSp$.
\end{proof}

\subsection{Projective connections and projective vector fields in dimension
3}\label{sec:proj.conn.3d}

In coordinates $(x^1,x^2,x^3)=(x,y,z)$, in view of \eqref{eqn:proj.conn.gen.multidim.2}, a 3-dimensional projective connection is given by
\begin{equation}\label{eqn:3.dim.proj.conn}
\left\{
\begin{array}{rl}
y_{xx}
&=f^2_{11} + f^2_{12}y_x + f^2_{13}z_x + f^2_{22}y_x^2 + 2f^2_{23}y_x z_x +
f^2_{33}z_x^2 + f_{11}y_x^3 + 2f_{12}y_x^2 z_x + f_{22}y_x z_x^2
\\
&=:F^2(x,y,z,y_x,z_x)
\\
\\
z_{xx}
&=f^3_{11} + f^3_{12}y_x + f^3_{13}z_x + f^3_{22}y_x^2 + 2f^3_{23}y_x z_x +
f^3_{33}z_x^2 + f_{11}y_x^2  z_x + 2f_{12}y_x  z_x^2 + f_{22} z_x^3
\\
&=:F^3(x,y,z,y_x,z_x)
\end{array}
\right.
\end{equation}
Taking into account Remark \ref{rem.proj.symm.as.point}, below we write the system of differential equations that the components of a vector field
\begin{equation}\label{eqn:v}
v=v^1\partial_x+v^2\partial_y+v^3\partial_z\,,\quad v^i=v^i(x,y,z)
\end{equation}
have  to satisfy to be a projective vector field for the projective
connection \eqref{eqn:3.dim.proj.conn}. For this purpose, it is useful to recall some basic facts
concerning the theory of infinitesimal symmetries of a system of 
type~\eqref{eqn:3.dim.proj.conn}. More details can be found, for
instance, in \cite{olver2000applications}.
Consider the second jet space $J^2(1,2)$ with an independent variable $x$ and 
two dependent variables $y,z$ (since we are working locally, we may think of
$J^2(1,2)$ as $\mathbb{R}^7$ with coordinates  $(x,y,z,y_x,z_x,y_{xx},z_{xx})$). We define the total derivative operator restricted to system
 \eqref{eqn:3.dim.proj.conn}:
\begin{equation*}
D_x=\partial_x+y_x\partial_y+z_x\partial_z+F^2\partial_{y_x}+F^3\partial_{z_x}\,.
\end{equation*}
The prolongation of vector field \eqref{eqn:v}  to the second jet $J^2(1,2)$ is a vector field given by
\begin{multline}
v^{(2)}=v^1\partial_x+v^2\partial_y+v^3\partial_z+\left(D_x(v^2)-y_xD_x(v^1)\right)\partial_{y_x} + \left(D_x(v^3)-z_xD_x(v^1)\right)\partial_{z_x} +
\\
\left(D_x^2(v^2)-y_xD_x^2(v^1)-2y_{xx}D_x(v^1)\right)\partial_{y_{xx}} + \left(D_x^2(v^3)-z_xD_x^2(v^1)-2z_{xx}D_x(v^1)\right)\partial_{z_{xx}}
\end{multline}
where $D^2_x=D_x\circ D_x$. The above vector field is determined by the local
one-parametric group of transformations $\phi^{(2)}_t$ given by the
prolongation to the second jet space $J^2(1,2)$ of the local flow $\phi_t$ 
of~\eqref{eqn:v} (\cite[Sec. 2.3]{olver2000applications}).

The vector field \eqref{eqn:v} is an infinitesimal symmetry of system  
\eqref{eqn:3.dim.proj.conn} (i.e.\ in other words, a projective vector field 
of the projective connection  \eqref{eqn:3.dim.proj.conn}, see again Remark 
\ref{rem.proj.symm.as.point}) if and only if
\begin{equation}\label{eqn:cond.proj}
\left.
\begin{array}{l}
v^{(2)}(y_{xx}-F^2)=0
\\
v^{(2)}(z_{xx}-F^3)=0
\end{array}
\right\}
\,\text{when restricted to\,\,} \{y_{xx}=F^2\,,z_{xx}=F^3\}\,.
\end{equation}
A straightforward computation shows that conditions \eqref{eqn:cond.proj} give
a second order system of 18 partial differential equations (PDEs)  in the unknown functions $(v^1,v^2,v^3)$.

\section{The action of the projective algebra\texorpdfstring{ on the
metrisation space $\solSp$}{}.}
\label{sec:Lv.action}

The action $\Phi$ of a projective vector field $v\in\projalg$ on $\solSp$ is
\begin{equation}\label{eq:proj.action}
  \Phi:\projalg\times\solSp\to\solSp\,,\quad
  \Phi(v,\sigma)=\lie_v\sigma\,,
\end{equation}
where $\lie_v$ is the Lie derivative along $v$.
\begin{remark}
	Note that $\solSp$ is invariant under $\Phi$ 
	\cite{matveev_2012,eastwood_2008}.
	This can be proven in an elementary way by choosing local coordinates
	$(x,y_2,\dots,y_n)$ such that $v=\partial_x$. Then in
	\eqref{eqn:proj.conn.gen.multidim} the
	coefficients depend on $y_2,\dots,y_n$ only and the Lie derivative acts
	by usual derivatives. Therefore $\lie_v\sigma\in\solSp$
	\cite{eastwood_2008}.
\end{remark}

\noindent Since $\solSp\sim\mathbb{R}^2$, action \eqref{eq:proj.action}, for
a specific $v\in\projalg$, is encoded in a $2\times2$ matrix $A$
\cite{matveev_2012}:
\begin{equation}\label{eqn:abcd}
	\lie_v\begin{pmatrix} \sigma \\ \bar\sigma \end{pmatrix}
	:= \begin{pmatrix} \lie_v\sigma \\ \lie_v\bar\sigma \end{pmatrix}
	=A\begin{pmatrix} \sigma \\ \bar\sigma \end{pmatrix}\,.
\end{equation}
where
\begin{equation}\label{eq:A}
	A=
	\begin{pmatrix}
			a & b \\ c & d
		\end{pmatrix}\,,\quad a,b,c,d\in\mathbb{R}\,.
\end{equation}
The elements $\sigma,\bar\sigma\in\solSp$ can correspond to metrics
($\sigma,\bar\sigma$ are full-rank solutions), or they can be of lower rank.

\noindent Concerning~\eqref{eqn:abcd} we also remark the following:
	
\begin{itemize}
\item	For a tensor section $\sigma$
	of (projective) \emph{weight} $w$ (compare~\cite{eastwood_2008}), the Lie
	derivative is related to the covariant
	derivative by
	\[
		\lie_v\sigma^{ij} = v^k\nabla_k\sigma^{ij}
		-\nabla_kv^i\,\sigma^{kj}
		-\nabla_kv^j\,\sigma^{ik}
		+w\,\nabla_kv^k\,\sigma^{ij}\,.
	\]
	
	\item
	A non-degenerate, weighted tensor section $\sigma\in\solSp$ has weight
	$-2$.
	Let us recall that $g$ and $\sigma$ are related
	by~\eqref{eqn:sigma.in.terms.of.g} and \eqref{eqn:g.in.terms.of.sigma}. 
	Then their Lie derivatives are related by
	\begin{equation}\label{eqn:Lv.sigma}
		\lie_v\sigma^{ij}=|\det(g)|^{\frac{1}{n+1}}\left(  \frac{1}{n+1} 
		\trace(g^{-1}\lie_vg)g^{ij} + \lie_vg^{ij} \right)
	\end{equation}
	and
	\begin{equation}\label{eqn:Lvg.w.r.t.sigma}
		\lie_vg^{ij}=|\det(\sigma)|\big(\lie_v\sigma^{ij} +
		\trace(\sigma_{ac}\lie_v\sigma^{cb})\sigma^{ij}\big)
	\end{equation}
	
	\item
	In the present paper we follow the notation in~\cite{matveev_2012}.
	The matrix $A$ in \eqref{eqn:abcd} therefore is not the matrix
	representing, in the basis $(\sigma,\bar\sigma)$, the Lie derivative by
	the usual conventions, but its
	transpose.
\end{itemize}

The following lemma is based on similar statements in \cite{manno_2018} and
\cite{solodovnikov_1956}.

\begin{lemma}\label{la:normalising.Lw}
Let $g$ be a metric of degree of mobility 2 admitting a projective vector
field $v$.
Let $\sigma$ be as in~\eqref{eqn:sigma.in.terms.of.g}.
Then there exist a constant $k\ne0$ and a solution $\bar\sigma\ne0$ of
\eqref{eqn:linear.system} such that
\[
	\lie_{kv}\sigma = a\sigma+b\bar\sigma.
\]
where $\bar\sigma$ and $\sigma$ are not proportional and where
\begin{enumerate}
	\item $a=b=0$ if $v$ is a Killing vector field for $g$.
	\item $b=0,a=1$ if $v$ is homothetic for $g$, and not a Killing vector
	field for $g$.
	\item $a=0,b=1$ if $v$ is an essential projective vector field for $g$.
\end{enumerate}
\end{lemma}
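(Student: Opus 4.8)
The plan is to use the fact that $\projalg(g)$ acts on $\solSp\cong\R^2$ via the $2\times2$ matrix $A$ of \eqref{eqn:abcd}, and to exploit the freedom we have in choosing (i) a scalar multiple of $v$ and (ii) a complement $\bar\sigma$ to $\sigma$ inside the $2$-dimensional space $\solSp$. Concretely, once $\sigma$ is fixed (it is determined by $g$ up to sign by \eqref{eqn:sigma.in.terms.of.g}), the Lie derivative $\lie_v\sigma$ is again an element of $\solSp$ by the remark following \eqref{eq:proj.action}, so we may write $\lie_v\sigma=a_0\sigma+b_0\tau$ for \emph{some} $\tau\in\solSp$ not proportional to $\sigma$, provided $\lie_v\sigma$ is not itself proportional to $\sigma$; if it is, we pick any $\tau$ completing $\sigma$ to a basis, so that $b_0=0$.

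First I would treat the three cases by looking at $\lie_v\sigma$. If $v$ is Killing, then $\lie_vg=0$, hence by \eqref{eqn:Lv.sigma} we get $\lie_v\sigma=0$, so $a=b=0$ with $k=1$ and any non-proportional $\bar\sigma$ (which exists since $\dim\solSp=2$); this is case (1). If $v$ is homothetic but not Killing, $\lie_vg=\lambda g$ with $\lambda\neq0$; plugging into \eqref{eqn:Lv.sigma} and using $\trace(g^{-1}\lie_vg)=n\lambda$ and $\lie_vg^{ij}=-\lambda g^{ij}$, one computes that $\lie_v\sigma$ is a nonzero constant multiple $\mu\sigma$ of $\sigma$ with $\mu\neq0$; then replacing $v$ by $kv=\tfrac1\mu v$ gives $\lie_{kv}\sigma=\sigma$, i.e.\ $a=1$, $b=0$, again with any non-proportional $\bar\sigma$; this is case (2). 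The essential case (3) is where $v$ is \emph{not} homothetic: I claim $\lie_v\sigma$ is then \emph{not} proportional to $\sigma$. Indeed, if $\lie_v\sigma=c\,\sigma$ for a function $c$, then applying the operator again and using the linearity of \eqref{eqn:linear.system} (so that each $\lie_v^j\sigma\in\solSp$) one forces $c$ to be constant; but $\lie_v\sigma=c\,\sigma$ with $c$ constant translates, via \eqref{eqn:Lvg.w.r.t.sigma}, into $\lie_vg=\lambda g$ with $\lambda$ constant — i.e.\ $v$ homothetic — contradicting essentiality. Hence $\sigma$ and $\bar\sigma:=\lie_v\sigma$ are a basis of $\solSp$, and $\lie_v\sigma=0\cdot\sigma+1\cdot\bar\sigma$, so $a=0$, $b=1$ with $k=1$; this is case (3).

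In all three cases the non-proportionality of $\bar\sigma$ and $\sigma$ is built into the construction, and $\bar\sigma\neq0$ because $\dim\solSp=2>1$. I would finally note that $\bar\sigma$ need not be a full-rank (i.e.\ metric-inducing) solution — the statement only claims it is a nonzero element of $\solSp$ — so no appeal to Lemma~\ref{la:full.rank.basis} is needed here.

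**Main obstacle.** The delicate point is the essential case: ruling out $\lie_v\sigma=c\,\sigma$ with \emph{non-constant} $c$. The clean way is to observe that $\sigma$ and $\lie_v\sigma$ both lie in the $2$-dimensional space $\solSp$, and that if $\lie_v\sigma$ were a \emph{pointwise} multiple of $\sigma$ then differentiating the relation $\lie_v\sigma=c\sigma$ and re-using $\lie_v\sigma,\lie_v^2\sigma\in\solSp$ forces $c$ constant (here the hypothesis $\dim\solSp=2$, hence Proposition~\ref{prop:kiosak.matveev} for non-constant curvature, is what keeps the iterated Lie derivatives trapped in a small space); then the conversion formula \eqref{eqn:Lvg.w.r.t.sigma} shows $c$ constant is equivalent to $v$ homothetic, closing the argument. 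One should also dispose of the constant-curvature case separately — but there $g$ has degree of mobility greater than $2$, which is excluded by hypothesis, so the lemma's assumption $\dim\solSp=2$ already handles it.
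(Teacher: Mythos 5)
Your proof is correct and follows essentially the same route as the paper: $\lie_v\sigma=0$ in the Killing case, $\lie_v\sigma=\mu\sigma$ with constant $\mu\ne0$ and the rescaling $k=\mu^{-1}$ in the properly homothetic case, and $\bar\sigma:=\lie_v\sigma$ with $k=1$ in the essential case, with non-proportionality checked via~\eqref{eqn:Lvg.w.r.t.sigma}. Your extra worry about a pointwise non-constant factor $c$ is unnecessary (and the sketched argument for it is vaguer than needed), since by~\eqref{eqn:abcd} the action of $\lie_v$ on the two-dimensional space $\solSp$ already has constant coefficients, so only constant proportionality needs to be excluded — which your homothety argument does.
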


\noindent The lemma normalises the constants $a,b$ in~\eqref{eq:A}.
Note however that in the lemma, for essential $v$, the solution $\bar\sigma$
is not necessarily of full rank.
\begin{proof}
If $v$ is Killing, then $\lie_vg=0$ and thus $\lie_v\sigma=0$. We set $k=1$.
Let $\bar g$ be projectively equivalent to $g$, but not proportional to $g$.
Then we obtain a solution $\bar\sigma$ of~\eqref{eqn:linear.system} from
$\bar g$ using~\eqref{eqn:sigma.in.terms.of.g}. For any such~$\bar\sigma$, we
find $a=b=0$.
If $v$ is properly homothetic, we have $\lie_v\sigma=\mu\sigma$ for some
constant $\mu\ne0$.
We decree $k=\mu^{-1}$, and obtain $\lie_{kv}\sigma=\sigma$. We conclude
$a=1,b=0$ analogously to the previous case.
Finally, if $v$ is an essential projective vector field, we let
$\bar\sigma:=\lie_v\sigma$ and $k=1$. Then $a=0,b=1$.
\end{proof}

We have the following lemma, found in~\cite{bolsinov_2011}, see
also~\cite{bolsinov_2015cproj}. It generalises equations
in~\cite{solodovnikov_1956}.
\begin{lemma}[\cite{bolsinov_2011}]
	Let $L=L(\sigma,\bar\sigma)$ with $\sigma$ of full rank. Then,
	for a projective vector field $v$,
	\begin{equation}\label{eqn:LvL.solpol}
		\lie_vL = -bL^2+(d-a)L+c\Id
	\end{equation}
\end{lemma}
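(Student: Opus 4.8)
The plan is to compute $\lie_v L$ directly from the definition $L = L(\sigma,\bar\sigma) = \bar\sigma\sigma^{-1}$, using the fact that $v$ acts on the two-dimensional space $\solSp$ by the matrix $A = \begin{pmatrix} a & b \\ c & d\end{pmatrix}$ as in~\eqref{eqn:abcd}, i.e.~$\lie_v\sigma = a\sigma + b\bar\sigma$ and $\lie_v\bar\sigma = c\sigma + d\bar\sigma$. First I would record the Leibniz rule for the Lie derivative applied to the product $\bar\sigma\sigma^{-1}$, namely $\lie_v L = (\lie_v\bar\sigma)\sigma^{-1} + \bar\sigma\,\lie_v(\sigma^{-1})$, together with the standard identity $\lie_v(\sigma^{-1}) = -\sigma^{-1}(\lie_v\sigma)\sigma^{-1}$, which follows by differentiating $\sigma\sigma^{-1} = \Id$. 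Here $\sigma^{-1}$ denotes the inverse of the weighted tensor $\sigma$, a $(0,2)$-tensor of weight $+2$, so that the contractions below are all tensorially well-defined; since $\sigma$ has full rank this is legitimate.

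Substituting the two relations from $A$, I get
\begin{equation*}
	\lie_v L = (c\sigma + d\bar\sigma)\sigma^{-1} - \bar\sigma\sigma^{-1}(a\sigma + b\bar\sigma)\sigma^{-1}
	= c\,\Id + d\,\bar\sigma\sigma^{-1} - a\,\bar\sigma\sigma^{-1} - b\,\bar\sigma\sigma^{-1}\bar\sigma\sigma^{-1}.
\end{equation*}
Recognising $\bar\sigma\sigma^{-1} = L$ and $\bar\sigma\sigma^{-1}\bar\sigma\sigma^{-1} = L^2$, this collapses to $\lie_v L = c\,\Id + (d-a)L - bL^2$, which is exactly~\eqref{eqn:LvL.solpol}. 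So the core of the argument is a two-line computation once the bookkeeping about weights and the transpose convention (noted in the third bullet after~\eqref{eqn:abcd}) is handled correctly.

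The one genuine subtlety — and the step I expect to require the most care — is checking that $L = \bar\sigma\sigma^{-1}$ is \emph{contravariantly} consistent, i.e.~that the matrix $A$ acting on the column $(\sigma,\bar\sigma)^T$ produces the stated $\lie_v L$ rather than its transpose or a sign-flipped variant. Because the paper follows the convention of~\cite{matveev_2012} in which $A$ in~\eqref{eqn:abcd} is the transpose of the usual matrix of $\lie_v$ in the basis $(\sigma,\bar\sigma)$, one must be scrupulous about which entries multiply $\sigma$ versus $\bar\sigma$ in each of the two rows; a transposition error would swap $b$ and $c$ and send $(d-a)$ to $(a-d)$. I would verify the normalisation against Lemma~\ref{la:normalising.Lw}: for an essential projective field with $a=0,b=1$ (and then $\bar\sigma = \lie_v\sigma$), the formula should give $\lie_v L = -L^2 + dL + c\,\Id$, and for a Killing field ($a=b=c=d=0$, since $A=0$) it should give $\lie_v L = 0$, consistent with $L$ being $v$-invariant — both sanity checks pass, confirming the orientation of the indices. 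The remaining verification that $\lie_v(\sigma^{-1}) = -\sigma^{-1}(\lie_v\sigma)\sigma^{-1}$ holds for weighted tensors is routine: the weights of $\sigma$ and $\sigma^{-1}$ are opposite, so the $w\,\nabla_k v^k$ terms in the Lie-derivative formula~\eqref{eqn:Lv.sigma} cancel appropriately and the identity reduces to the familiar matrix statement.
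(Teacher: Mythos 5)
Your computation is exactly the paper's proof: both use the Leibniz rule together with $\lie_v(\sigma^{-1})=-\sigma^{-1}(\lie_v\sigma)\sigma^{-1}$ (valid since $\sigma$ has full rank) and substitute $\lie_v\sigma=a\sigma+b\bar\sigma$, $\lie_v\bar\sigma=c\sigma+d\bar\sigma$ from~\eqref{eqn:abcd} to obtain $-bL^2+(d-a)L+c\,\Id$. Your additional sanity checks on the convention for $A$ are fine but not needed beyond what the paper states.
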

\begin{proof}
	This follows from a direct computation, using the formula
	$\lie_v(\sigma^{-1})=-\sigma^{-1}(\lie_v\sigma)\sigma^{-1}$ that holds if $\sigma$ is not of lower rank. Indeed, on account of \eqref{eqn:abcd},
	\begin{align*}
		\lie_vL
		&= \lie_v(\bar\sigma\sigma^{-1})
		= \lie_v(\bar\sigma)\sigma^{-1} + \bar\sigma\lie_v\sigma^{-1}
		= \lie_v(\bar\sigma)\sigma^{-1}
		- \bar\sigma\sigma^{-1}(\lie_v\sigma)\sigma^{-1}
		\\
		&= (c\Id+dL) - L(a\Id+bL)
		= -bL^2+(d-a)L+c\Id\,.
	\end{align*}
\end{proof}

\begin{lemma}\label{lemma:Lwg1L.symm}
Let $L=L(g,\bar{g})$ with $g$ and $\bar{g}$ projectively equivalent. Let $v$
be a projective vector field of~$g$. Then $(\lie_vg)L$ is symmetric. In
particular $(\lie_vg)\big(L(u),w\big)=(\lie_vg)\big(u,L(w)\big)$.
\end{lemma}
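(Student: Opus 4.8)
The statement to prove is Lemma~\ref{lemma:Lwg1L.symm}: for $L=L(g,\bar g)$ with $g,\bar g$ projectively equivalent and $v$ a projective vector field of $g$, the $(0,2)$-tensor $(\lie_vg)L$ is symmetric, equivalently $(\lie_vg)(L(u),w)=(\lie_vg)(u,L(w))$.

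My plan is to reduce to facts already established in the excerpt. By Remark~\ref{rem:degree.mob.1} and Proposition~\ref{prop:kiosak.matveev}, if the degree of mobility of $g$ is $1$ then $\bar g=cg$, so $L=c\,\Id$ and $(\lie_vg)L=c\,\lie_vg$ is symmetric trivially; hence we may assume the degree of mobility is $2$. I want to express $\lie_vg$ in terms of $L$ and $g$.

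Let me do that first. The plan is to differentiate the defining relation $L=L(g,\bar g)=f\,\bar g^{-1}g$, where $f=\bigl|\det(\bar g)/\det(g)\bigr|^{1/(n+1)}$. Equivalently, in matrix form $gL=f\,g\bar g^{-1}g$, which Proposition~\ref{prop:Riem.Benenti} tells us is symmetric. Applying $\lie_v$ to $gL$ gives $\lie_v(gL)=(\lie_vg)L+g\,\lie_vL$. Since $L$ satisfies Equation~\eqref{eqn:LvL.solpol}, $\lie_vL=-bL^2+(d-a)L+c\,\Id$, so $g\,\lie_vL=-b\,gL^2+(d-a)gL+c\,g$. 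By Remark~\ref{rem:gLk.symmetric}, $gL^2$ and $gL$ are symmetric; of course $g$ is symmetric. Hence $g\,\lie_vL$ is symmetric. On the other hand, $\lie_v(gL)$ is the Lie derivative of a symmetric $(0,2)$-tensor and is therefore symmetric. Subtracting, $(\lie_vg)L=\lie_v(gL)-g\,\lie_vL$ is the difference of two symmetric $(0,2)$-tensors, hence symmetric. This proves the first claim.

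The second claim is just the coordinate-free restatement: symmetry of $(\lie_vg)L$ as a $(0,2)$-tensor built by composing the $(1,1)$-tensor $L$ with the $(0,2)$-tensor $\lie_vg$ means exactly $(\lie_vg)(L(u),w)=(\lie_vg)(u,L(w))$ for all vector fields $u,w$. The one point requiring care — and the only genuine obstacle — is the hypothesis in the lemma giving Equation~\eqref{eqn:LvL.solpol}, namely that $\sigma$ be of full rank, i.e.\ that one of the two solutions in a basis of $\solSp$ correspond to an honest metric; this is exactly what Lemma~\ref{la:full.rank.basis} guarantees, so $L=L(\sigma,\bar\sigma)=\bar\sigma\sigma^{-1}$ is well defined and \eqref{eqn:LvL.solpol} applies. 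One should also note $L(g,\bar g)$ and $L(\sigma,\bar\sigma)$ agree up to the identifications \eqref{eqn:sigma.in.terms.of.g}--\eqref{eqn:g.in.terms.of.sigma}, so using the $\sigma$-version of $L$ is harmless. Everything else is the routine bookkeeping of Lie derivatives of tensors contracted with the metric, so I would not spell it out in full.
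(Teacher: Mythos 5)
Your proof is correct and follows essentially the same route as the paper's: write $(\lie_vg)L=\lie_v(gL)-g\,\lie_vL$, observe that $\lie_v(gL)$ is symmetric because $gL$ is, and use \eqref{eqn:LvL.solpol} together with Remark~\ref{rem:gLk.symmetric} to see that $g\,\lie_vL$ is symmetric. The only superfluous precaution is the appeal to Lemma~\ref{la:full.rank.basis}: since $g$ is a genuine metric, the section $\sigma$ associated to it via \eqref{eqn:sigma.in.terms.of.g} is automatically of full rank, so \eqref{eqn:LvL.solpol} applies directly.
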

\begin{proof}
We have that
$$
(\lie_vg)L=\lie_v(gL) - g(\lie_vL)\,.
$$
Since $gL$ is symmetric (see the proof of Proposition \ref{prop:Riem.Benenti}), also $\lie_v(gL)$ is symmetric. So, in order to prove the statement of the lemma, in view of the above formula, it would be enough to prove that $g(\lie_vL)$ is symmetric. Since $\lie_vL$ is given by \eqref{eqn:LvL.solpol}, this follows by Remark \ref{rem:gLk.symmetric}.
\end{proof}

\begin{definition}\label{def:solpol}
Let $v$ be a projective vector field. Let $\lie_v$ be described by a matrix $A$ as in \eqref{eqn:abcd}.	We call the formal polynomial
	\begin{equation}\label{eq:solodovnikov.polynomial}
		\solpol_A(t) = -bt^2+(d-a)t+c
	\end{equation}
	\emph{Solodovnikov's polynomial}.
\end{definition}

\begin{lemma}\label{la:eigenvalues.L.roots.solodovnikov}
	Let $g$ be a 3-dimensional metric with degree of mobility 2 admitting 
	projective vector field $v$.
	Let $\bar g$ be projectively equivalent, but non-proportional, to $g$. 
	Let $\lambda$ be an eigenvalue of $L=L(g,\bar g)$.
	Then
	\begin{equation}\label{eqn:Andreas.Solod.1}
		\solpol_A(\lambda) =v(\lambda)\,,
	\end{equation}
	with $\solpol_A$ given by \eqref{eq:solodovnikov.polynomial}. In
	particular, if $\lambda$ is constant, then it is a root of $\solpol_A$.
\end{lemma}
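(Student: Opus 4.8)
The plan is to combine the eigenvalue equation for $L$ with the tensorial identity \eqref{eqn:LvL.solpol} for $\lie_vL$. First I would fix a point $p$ and an eigenvector $u\neq0$ of $L$ at $p$ with $L(u)=\lambda u$. The idea is to differentiate the relation $L(u)=\lambda u$ along $v$, so that $v(\lambda)$ appears on the right-hand side, while the left-hand side produces $(\lie_vL)(u)$ plus a term $L(\lie_vu)$ that I will arrange to cancel or be absorbed. Concretely, from $\lie_v(L(u)) = (\lie_vL)(u) + L(\lie_vu)$ and $\lie_v(\lambda u) = v(\lambda)\,u + \lambda\,\lie_vu$, subtracting gives
\begin{equation}\label{eqn:plan.step1}
	(\lie_vL)(u) + L(\lie_vu) = v(\lambda)\,u + \lambda\,\lie_vu\,.
\end{equation}
Now I substitute $\lie_vL = -bL^2+(d-a)L+c\,\Id$ from \eqref{eqn:LvL.solpol}; since $L(u)=\lambda u$ and $L^2(u)=\lambda^2 u$, the first term becomes $\bigl(-b\lambda^2+(d-a)\lambda+c\bigr)u = \solpol_A(\lambda)\,u$. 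So \eqref{eqn:plan.step1} reads
\begin{equation}\label{eqn:plan.step2}
	\solpol_A(\lambda)\,u + L(\lie_vu) = v(\lambda)\,u + \lambda\,\lie_vu\,,
\end{equation}
i.e. $\bigl(\solpol_A(\lambda)-v(\lambda)\bigr)u = (\lambda\,\Id - L)(\lie_vu)$.

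The remaining task is to show the right-hand side of \eqref{eqn:plan.step2} contributes nothing along the $u$-direction. If $\lambda$ is a simple eigenvalue at $p$, then $(\lambda\,\Id-L)$ restricted to the complementary invariant subspace is invertible, and its image is exactly that complement, which does not contain a nonzero multiple of $u$; hence the component of $(\lambda\,\Id-L)(\lie_vu)$ along $u$ vanishes, forcing $\solpol_A(\lambda)=v(\lambda)$ at $p$. For a repeated eigenvalue I would instead use that $L$ is self-adjoint with respect to $g$ (Proposition~\ref{prop:Riem.Benenti}): pair \eqref{eqn:plan.step2} with $g(\cdot,u)$. The term $g\bigl((\lambda\,\Id-L)(\lie_vu),u\bigr) = g\bigl(\lie_vu,(\lambda\,\Id-L)(u)\bigr) = 0$ by self-adjointness and $L(u)=\lambda u$, while $g(u,u)$ may be zero in indefinite signature, so I would pick $u$ in the $g$-nondegenerate part of the eigenspace (or use a vector with $g(u,u)\neq0$ when the signature permits, noting the eigenspaces of a $g$-self-adjoint operator are $g$-nondegenerate away from the nilpotent blocks, and handle Jordan blocks separately via the generalized eigenspace filtration). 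This yields $\solpol_A(\lambda)-v(\lambda)=0$ at $p$, and since $p$ was arbitrary, \eqref{eqn:Andreas.Solod.1} holds identically.

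For the final sentence: if $\lambda$ is constant then $v(\lambda)=0$, so \eqref{eqn:Andreas.Solod.1} gives $\solpol_A(\lambda)=0$, i.e. $\lambda$ is a root of Solodovnikov's polynomial.

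I expect the main obstacle to be the indefinite-signature / non-diagonalisable case, where $u$ may be $g$-isotropic and the pairing argument degenerates; the cleanest fix is probably to work with the generalized eigenspaces (which are $g$-orthogonal to one another and on which $g$ is nondegenerate), observe that \eqref{eqn:LvL.solpol} shows $\lie_vL$ is a polynomial in $L$ and hence preserves this decomposition, and then run the above computation inside a single generalized eigenspace where $L-\lambda\,\Id$ is nilpotent — there the component of $(\lambda\,\Id-L)(\lie_vu)$ lying outside $\operatorname{im}(\lambda\,\Id-L)$ is automatically zero, and $u$ can be chosen outside that image (e.g. a cyclic generator), again forcing the scalar identity. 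The diagonalisable Riemannian case, which is the one actually needed in the bulk of the paper, is the short argument of the first two paragraphs.
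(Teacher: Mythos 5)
Your derivation of the key identity $\bigl(\solpol_A(\lambda)-v(\lambda)\bigr)u=(\lambda\,\Id-L)([v,u])$ is exactly the paper's starting point (its Equation~\eqref{eqn:Andreas.Solod.2}), and your treatment of the easy cases is sound: for a simple eigenvalue the image argument works (and is a nice alternative to the paper, which does not use it), and whenever one can choose an eigenvector with $g(u,u)\neq0$ the pairing with $g(\cdot,u)$ plus self-adjointness of $L$ settles the claim — this is precisely the paper's first step, and it already covers the Riemannian case and, more generally, any non-null eigenvector.

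The gap is in your fallback for the degenerate case, which is exactly where the lemma has content beyond the Riemannian setting (the lemma assumes neither diagonalisability nor definite signature, and the paper exhibits a non-diagonalisable $L$). If $\lambda$ sits in a Jordan block of size $\ge 2$, the proper eigenspace inside the generalized eigenspace is isotropic (for a size-2 block, $g(e_1,e_1)=g(Ne_2,e_1)=g(e_2,Ne_1)=0$), so no non-null eigenvector is available, and moreover every eigenvector lies \emph{inside} $\operatorname{im}(L-\lambda\,\Id)$ restricted to that generalized eigenspace (for a single block, $\ker N\subseteq\operatorname{im}N$). Your proposed fix — "choose $u$ outside that image, e.g.\ a cyclic generator" — is not admissible: the whole computation rests on $L(u)=\lambda u$, and a cyclic generator is not an eigenvector, while no eigenvector lies outside the image; projecting onto the generalized eigenspace therefore yields $\bigl(\solpol_A(\lambda)-v(\lambda)\bigr)w=-N\xi$ with $w\in\operatorname{im}N$, from which nothing follows. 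This is the case the paper's proof is really about: with $g(w,w)=0$ and $u:=[v,w]$ not proportional to $w$, it derives $Lu=\lambda u+(v(\lambda)-\solpol_A(\lambda))w$, applies \eqref{eqn:LvL.solpol} a second time to get the relation \eqref{eqn:nice}, and then pairs with $w$ or with $u$, using that in dimension~3 one cannot have $g(w,w)=g(u,w)=g(u,u)=0$ for independent $u,w$, together with the symmetry of $(\lie_vg)L$ from Lemma~\ref{lemma:Lwg1L.symm}. Some argument of this second-order type (or an explicit reduction to it) is needed to close your proof; as written, the isotropic Jordan-block case remains unproven.
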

\begin{proof}
	Let $w$ be an eigenvector of $L$ with the eigenvalue $\lambda$, 
	i.e.~$Lw=\lambda w$.
	Because of the identity
	\[
	\lie_v(\lambda w) = \lie_v(Lw)
	= (\lie_vL)w + L\lie_vw
	= (\lie_vL)w + L[v,w]\,,
	\]
	and due to~\eqref{eqn:LvL.solpol}, we have
	\begin{equation}\label{eqn:Andreas.Solod.2}
		0
		= \solpol_A(L)w - (\lie_vL)w
		=\solpol_A(\lambda) w-v(\lambda) w +(L-\lambda\Id)\,[v,w]\,.
	\end{equation}
	This implies
	\begin{equation}\label{eqn:appoggio.gianni.1}
		\big(-\solpol_A(\lambda)+v(\lambda)\big) g(w,w) =
		g\big((L-\lambda\Id)\,[v,w] ,w\big)=0\,,
	\end{equation}
	where the second equality holds because of
	\[
	g\big((L-\lambda\Id)\,[v,w] ,w\big)
	\overset{(*)}{=}
	g\big([v,w],(L-\lambda\Id)w\big)
	=g\big([v,w] ,0\big)=0\,.
	\]
	The equality~$(*)$ above holds true as $gL-\lambda g$ is symmetric, see 
	the	proof of Proposition \ref{prop:Riem.Benenti}.
	
	If $g(w,w)\neq 0$, Equation~\eqref{eqn:appoggio.gianni.1} immediately 
	yields~\eqref{eqn:Andreas.Solod.1}.\footnote{%
		Note that this proves the claim, even in arbitrary dimension, for any 
		non-null eigenvector $w$, and in particular for Riemannian metrics.}
	Thus let $g(w,w)=0$.
	If $[v,w]$ is proportional to~$w$, i.e.~$[v,w]=\mu w$ for some function 
	$\mu$, then
	$(L-\lambda\Id)\,[v,w] =\mu\big( Lw-\lambda w\big)=0$ and, because
	of~\eqref{eqn:Andreas.Solod.2}, we obtain \eqref{eqn:Andreas.Solod.1}.
	It remains to prove the claim for $g(w,w)=0$ if $u:=[v,w]$ is not 
	proportional to $w$.
	By virtue of~\eqref{eqn:Andreas.Solod.2}, in this case we have
	\begin{equation}\label{eqn:Lu}
		Lu=v(\lambda) w+\lambda u -\solpol_A(\lambda) w\,.
	\end{equation}
	Hence Equation~\eqref{eqn:LvL.solpol} yields the first equality in
	\begin{equation}\label{eqn:S(L).LvL}
		\left( -bL^2+(d-a)L+c\Id \right) u = (\lie_vL)u
		= \lie_v(Lu)-L[v,u]\,,
	\end{equation}
	where the second equality follows from the Leibniz rule.
	In order to compute the left hand side of this identity, we 
	use~\eqref{eqn:Lu} to obtain
	\begin{equation}\label{eqn:L2u}
		L^2u=2v(\lambda)\lambda w + \lambda^2u-2\solpol_A(\lambda)\lambda w
	\end{equation}
	After resubstituting \eqref{eqn:Lu} and \eqref{eqn:L2u}
	into~\eqref{eqn:S(L).LvL}, using \eqref{eqn:Andreas.Solod.2}, we have
	\begin{equation}\label{eqn:nice}
		2(\solpol_A(\lambda)-v(\lambda))u + Bw = (\lambda\Id-L)[v,u]
	\end{equation}
	for some expression $B$.
	To finish the proof, we study~\eqref{eqn:nice} for the two possible cases:
	First, in the case when $g(u,w)\neq0$, Equation~\eqref{eqn:nice} together
	with Proposition~\ref{prop:Riem.Benenti} implies
	\begin{align*}
		2(\solpol_A(\lambda)-v(\lambda))g(u,w)
		=g\big([v,u],(\lambda\Id-L)w\big)=0\,.
	\end{align*}
	We obtain~\eqref{eqn:Andreas.Solod.1}.
	Second, in the case when $g(u,w)= 0$, we have
	\begin{equation}\label{eqn:lievgww.zero.2}
		(\lie_vg)(u,w)+g([v,u],w)+g(u,u) = v(g(u,w)) = 0\,,
	\end{equation}
	along with
	\begin{equation}\label{eqn:lievgww.zero}
		(\lie_vg)(w,w) = v(g(w,w)) = 0\,,
	\end{equation}
	which follows already from~$g(w,w)=0$.
	Therefore, using~\eqref{eqn:nice} and then Lemma~\ref{lemma:Lwg1L.symm}, 
	we conclude
	\begin{equation}\label{eqn:serve}
		2(\solpol_A(\lambda)-v(\lambda))(\lie_vg)(u,w)=
		(\lie_vg)\big([v,u],(\lambda\Id-L)w\big)
		=0\,.
	\end{equation}
	Note that the conditions $g(u,w)=0=g(w,w)$ imply, together with the
	assumption that $u$ and $w$ are non-proportional, that
	\begin{equation}\label{eqn:guu.not.zero}
		g(u,u)\neq 0\,,
	\end{equation}
	as otherwise $g$ would be degenerate.
	Taking the scalar product w.r.t.~$g$ of~\eqref{eqn:nice} and $u$, we
	arrive at
	\begin{multline*}
		2(\solpol_A(\lambda)-v(\lambda))g(u,u)
		\overset{\text{Prop. \ref{prop:Riem.Benenti}}}{=}
		g\big([v,u], (\lambda\Id-L)u\big)
		\overset{\eqref{eqn:Lu}}{=}
		g\big([v,u], \solpol_A(\lambda) w-v(\lambda)w \big)
		\\
		\overset{\eqref{eqn:lievgww.zero.2}}{=}
		-(\solpol_A(\lambda) -v(\lambda))\big((\lie_vg)(u,w)+g(u,u)\big)
		\overset{\eqref{eqn:serve}}{=}
		-(\solpol_A(\lambda) -v(\lambda))g(u,u)
	\end{multline*}
	implying \eqref{eqn:Andreas.Solod.1} in view of 
	\eqref{eqn:guu.not.zero}.
	This concludes the proof.
\end{proof}

In view of \eqref{eqn:Andreas.Solod.1} and \eqref{eqn:Andreas.Solod.2}, we obtain the following corollary.
\begin{corollary}
Let the hypotheses be as in Lemma \ref{la:eigenvalues.L.roots.solodovnikov}. Then
 $[v,w]$ is an eigenvector of $L$ with eigenvalue~$\lambda$.
\end{corollary}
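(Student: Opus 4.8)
The plan is to read the statement off directly from the computation already performed in the proof of Lemma~\ref{la:eigenvalues.L.roots.solodovnikov}. With $w$ an eigenvector of $L=L(g,\bar g)$ for the eigenvalue $\lambda$, that proof produced Equation~\eqref{eqn:Andreas.Solod.2}, namely
\[
0 = \solpol_A(\lambda)\,w - v(\lambda)\,w + (L-\lambda\Id)\,[v,w]\,.
\]
First I would invoke the conclusion~\eqref{eqn:Andreas.Solod.1} of that lemma, $\solpol_A(\lambda) = v(\lambda)$; substituting this into the displayed identity cancels the first two terms and leaves
\[
(L-\lambda\Id)\,[v,w] = 0\,.
\]
This is precisely the assertion that $[v,w]$ is annihilated by $L-\lambda\Id$, i.e.\ that it lies in the $\lambda$-eigenspace of $L$; equivalently $L[v,w]=\lambda\,[v,w]$, so $[v,w]$ is an eigenvector of $L$ with eigenvalue $\lambda$.

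There is essentially no obstacle: the real work was done in the preceding lemma (in particular the null-eigenvector analysis), and the corollary is just the remark that, once $\solpol_A(\lambda)=v(\lambda)$ is available, Equation~\eqref{eqn:Andreas.Solod.2} collapses to an eigenvalue relation for $[v,w]$. The only point deserving a word is the degenerate case $[v,w]=0$ (e.g.\ if $v$ and $w$ commute), in which the statement should be understood as ``$[v,w]$ lies in the $\lambda$-eigendistribution of $L$'' and holds trivially.
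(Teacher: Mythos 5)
Your argument is exactly the paper's own: the corollary is stated there as following ``in view of \eqref{eqn:Andreas.Solod.1} and \eqref{eqn:Andreas.Solod.2}'', i.e.\ substituting $\solpol_A(\lambda)=v(\lambda)$ into \eqref{eqn:Andreas.Solod.2} to get $(L-\lambda\Id)[v,w]=0$, which is precisely what you do. Your closing remark about the degenerate case $[v,w]=0$ (where one should read the conclusion as membership in the $\lambda$-eigendistribution) is a sensible clarification but does not change the argument.
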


\begin{lemma}\label{la:proj.action.to.metrics}
	Let $g$ and $\bar g$ be a pair of projectively equivalent,
	non-proportional $n$-dimensional metrics with non-constant curvature and
	degree of mobility 2.
	Let $g$ (and $\bar g$) admit the projective vector field~$v$.
	Then
	\begin{subequations}\label{eq:Lvgg}
		\begin{align}
			\mathcal{L}_vg_{ij}
			&= -(n+1)a\,g_{ij}
			- b \left( \tr(L)\,g_{ij}
			+ g_{ik}L\indices{^k_j} \right)
			\label{eq:Lvg1}
			\\
			\mathcal{L}_v\bar g_{ij}
			&=- (n+1)d\,\bar g_{ij}
			- c \left( \tr(\bar{L})\,\bar g_{ij}
			+ \bar g_{ik}\bar{L}\indices{^k_j} \right)
			\label{eq:Lvg2}
		\end{align}
	\end{subequations}
	where $L=L(g,\bar g)$ is the Benenti tensor of $(g,\bar g)$.
	By $\bar{L}$ we denote the inverse of $L$, $\bar{L}=L^{-1}$.
\end{lemma}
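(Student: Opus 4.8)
The plan is to derive \eqref{eq:Lvg1} directly from the relation \eqref{eqn:Lv.sigma} between $\lie_vg$ and $\lie_v\sigma$, together with the normalisation from Lemma~\ref{la:normalising.Lw} encoded in the matrix $A$ of \eqref{eqn:abcd}. Equation~\eqref{eq:Lvg2} then follows by symmetry, exchanging the roles of $g$ and $\bar g$: indeed $L(\bar g,g)=L(g,\bar g)^{-1}=\bar L$, and in the basis $(\bar\sigma,\sigma)$ the action of $\lie_v$ is given by the matrix $\begin{pmatrix} d & c \\ b & a\end{pmatrix}$, so \eqref{eq:Lvg2} is literally \eqref{eq:Lvg1} with $(g,L,a,b)$ replaced by $(\bar g,\bar L,d,c)$. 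Thus it suffices to prove \eqref{eq:Lvg1}.

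First I would recall that $\sigma^{ij}=|\det g|^{\frac1{n+1}}g^{ij}$ and, by \eqref{eqn:abcd} under the normalisation of Lemma~\ref{la:normalising.Lw}, $\lie_v\sigma = a\sigma + b\bar\sigma$. Using $\bar\sigma = \sigma L^{T}$ at the level of matrices (from \eqref{eqn:benenti.tensor.from.sigma}, $L=\bar\sigma\sigma^{-1}$, so $\bar\sigma=L\sigma$; one must be careful about index placement, but $gL$ being symmetric by Proposition~\ref{prop:Riem.Benenti} keeps things consistent), I get
\begin{equation*}
	\lie_v\sigma^{ij} = a\,\sigma^{ij} + b\,\sigma^{ik}L\indices{^j_k}\,.
\end{equation*}
Now invert the relation \eqref{eqn:Lv.sigma}, i.e.\ solve for $\lie_vg^{ij}$ in terms of $\lie_v\sigma^{ij}$: taking the $g$-trace of \eqref{eqn:Lv.sigma} gives $\trace(\sigma^{-1}\lie_v\sigma)$ in terms of $\trace(g^{-1}\lie_vg)$, and back-substituting expresses $\lie_vg^{ij}$ as a combination of $\lie_v\sigma^{ij}$ and $\sigma^{ij}\trace(\sigma^{-1}\lie_v\sigma)$ — this is exactly formula \eqref{eqn:Lvg.w.r.t.sigma}. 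Plugging in $\lie_v\sigma^{ij}=a\sigma^{ij}+b\sigma^{ik}L\indices{^j_k}$, computing $\trace(\sigma_{ac}\lie_v\sigma^{cb}) = na + b\,\tr(L)$, and converting back from $g^{ij}$ to $g_{ij}$ via $\lie_vg_{ij} = -g_{ik}g_{jl}\lie_vg^{kl}$ yields \eqref{eq:Lvg1} after collecting terms; the $(n+1)$ factors appear precisely from the weight-$\tfrac{2}{n+1}$ / $-2$ bookkeeping in \eqref{eqn:sigma.in.terms.of.g}.

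The main obstacle is purely bookkeeping: keeping track of the weight factors and of the transpose convention flagged in the third bullet after \eqref{eqn:abcd} (the matrix $A$ is the transpose of the usual representing matrix), so that the off-diagonal entry $b$ multiplies $\sigma^{ik}L\indices{^j_k}$ and not $L\indices{^i_k}\sigma^{kj}$ — these agree only because $gL$ is symmetric, and it is worth invoking Remark~\ref{rem:gLk.symmetric} explicitly to justify the manipulation. No genuinely hard analytic input is needed; Propositions~\ref{prop:Riem.Benenti} and the identities \eqref{eqn:Lv.sigma}--\eqref{eqn:Lvg.w.r.t.sigma}, plus the normalisation of Lemma~\ref{la:normalising.Lw}, do all the work. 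I would present the computation for $g$ in full and dispatch the $\bar g$ case in one sentence by the symmetry argument above.
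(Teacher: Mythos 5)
Your proposal is correct and follows essentially the same route as the paper's proof: both combine \eqref{eqn:abcd} with \eqref{eqn:Lvg.w.r.t.sigma} (equivalently $\lie_vg=-g\,\lie_v(g^{-1})\,g$) and the relation $\bar\sigma=L\sigma$ to compute $\lie_vg_{ij}$, and then obtain \eqref{eq:Lvg2} by repeating the computation with the roles of $(g,\sigma,a,b)$ and $(\bar g,\bar\sigma,d,c)$ exchanged, which is what the paper's ``analogously'' means. The only superfluous ingredient is your appeal to Lemma~\ref{la:normalising.Lw}: the identity $\lie_v\sigma=a\sigma+b\bar\sigma$ is just \eqref{eqn:abcd} with general $a,b$, and no normalisation is needed (nor wanted, since the lemma is stated for arbitrary entries of $A$).
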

\begin{proof}
	Recall the formula $0=\lie_v(g^{-1}g)=\lie_v(g^{-1}) g+ g^{-1}\lie_vg$,
	whereby $\lie_vg=-g\lie_v(g^{-1})g$.
	By a direct computation using formula~\eqref{eqn:sigma.in.terms.of.g}, 
	and the identities~\eqref{eqn:abcd} and~\eqref{eqn:Lvg.w.r.t.sigma}, it 
	then follows that
	\begin{equation*}
		\lie_vg
		= -\det(\sigma)\left[
		g(a\sigma+b\bar\sigma)g
		+ \tr\left(\sigma^{-1}(a\sigma+b\bar\sigma)\right)g\sigma g 
		\right]
		= -\big( (n+1)ag +bgL+b\tr(L)g \big)\,.
	\end{equation*}
	This proves Equation~\eqref{eq:Lvg1}.
	Equation~\eqref{eq:Lvg2} is obtained analogously.
\end{proof}

\section{Levi-Civita metrics}\label{sec:LC.metrics}

\begin{definition}\label{def:Levi-Civita}
A metric $g$ is said to be a \emph{Levi-Civita metric} if there exits a
metric $\bar{g}$ on the same manifold $M$ such that
\begin{itemize}
	\item $g$ and $\bar{g}$ are projectively equivalent metrics,
	\item $\bar{g}$ is not proportional to $g$, and
	\item $L(g,\bar{g})$ is diagonalisable.
\end{itemize}
\end{definition}

\begin{remark}\label{rmk:dom.is.two.or.cc}
	Let $g$ be a 3-dimensional Levi-Civita metric. Then it is either of constant curvature
	or its degree of mobility is exactly 2.
	The reason is that by Proposition~\ref{prop:kiosak.matveev} the metric is
	either of constant curvature or the degree of mobility is \emph{at most}
	2. But if the degree of mobility is equal to one, then any metric~$\bar
	g$ projectively equivalent to $g$ is proportional to $g$, contradicting
	the fact that $g$ is a Levi-Civita metric.
\end{remark}

\begin{proposition}\label{prop:always.diago}
Let us assume that the degree of mobility of a metric $g$ is 2. Then $g$ is
a Levi-Civita metric if and only if
$L(g,\bar{g})$ is
diagonalisable for all metrics $\bar{g}$ on the same manifold which are
projectively equivalent to $g$.
\end{proposition}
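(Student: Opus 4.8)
The plan is to prove both implications, the nontrivial one being that if $g$ is a Levi-Civita metric (so there exists \emph{some} projectively equivalent non-proportional $\bar g$ with $L(g,\bar g)$ diagonalisable) then $L(g,\hat g)$ is diagonalisable for \emph{every} projectively equivalent metric $\hat g$. The backward implication is immediate: if $L(g,\bar g)$ is diagonalisable for all such $\bar g$, then in particular there is a non-proportional $\bar g$ (which exists because $\dim\solSp=2$, by Lemma~\ref{la:full.rank.basis} there is even a rank-3 basis, so $g$ has a non-proportional partner) with $L(g,\bar g)$ diagonalisable, i.e.\ $g$ satisfies Definition~\ref{def:Levi-Civita}. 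If $\hat g$ happens to be proportional to $g$, then $L(g,\hat g)=c\,\Id$, which is trivially diagonalisable, so the only real content is the case $\hat g\not\propto g$.

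For the forward implication I would argue exactly as in the proof of Lemma~\ref{lemma:indep.mult.eigen}. Fix $\bar g$ with $L=L(g,\bar g)$ diagonalisable, and let $\hat g$ be any projectively equivalent metric. Pass to the weighted tensors $\sigma,\bar\sigma,\hat\sigma$ via~\eqref{eqn:sigma.in.terms.of.g}; these all lie in $\solSp$, which by hypothesis is 2-dimensional. Hence $(\sigma,\bar\sigma)$ spans $\solSp$, so there are constants $c_1,c_2\in\R$ with $\hat\sigma=c_1\sigma+c_2\bar\sigma$. Using $L(\sigma,\bar\sigma)=\bar\sigma\sigma^{-1}$ from~\eqref{eqn:benenti.tensor.from.sigma} and the relation between $L(g,\cdot)$ and $L(\sigma,\cdot)$ from Definition~\ref{def:benenti.tensor}, one gets
\[
	L(g,\hat g) = \hat\sigma\,\sigma^{-1} = c_1\,\Id + c_2\,L(g,\bar g)\,.
\]
Since $L(g,\bar g)$ is diagonalisable, so is any polynomial (here affine) expression $c_1\Id+c_2 L(g,\bar g)$ in it: a basis of eigenvectors of $L(g,\bar g)$ is simultaneously a basis of eigenvectors of $c_1\Id+c_2L(g,\bar g)$. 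Therefore $L(g,\hat g)$ is diagonalisable, which is what we wanted.

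The main (minor) obstacle is bookkeeping: one must be careful that $\sigma$ is genuinely of full rank so that $\sigma^{-1}$ makes sense and the identity $L(g,\hat g)=c_1\Id+c_2L$ is literally the Benenti tensor of the pair $(g,\hat g)$ rather than some rescaled version — but this is exactly the computation already carried out in Lemma~\ref{lemma:indep.mult.eigen}, and $\sigma$ is full-rank because it comes from the metric $g$ via~\eqref{eqn:sigma.in.terms.of.g}. A second small point is that $c_2$ need not be nonzero a priori (it is when $\hat g\not\propto g$, since otherwise $\hat\sigma\propto\sigma$), but this does not matter for diagonalisability: if $c_2=0$ then $L(g,\hat g)=c_1\Id$ is diagonal anyway. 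Thus no genuine difficulty arises once the linearity of System~\eqref{eqn:linear.system} and the two-dimensionality of $\solSp$ (Proposition~\ref{prop:kiosak.matveev}, via Remark~\ref{rmk:dom.is.two.or.cc}) are invoked.
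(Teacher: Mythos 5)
Your proof is correct and follows essentially the same route as the paper: both write $\hat\sigma=c_1\sigma+c_2\bar\sigma$ using $\dim\solSp=2$ and deduce $L(g,\hat g)=c_1\Id+c_2L(g,\bar g)$, from which diagonalisability is inherited (the paper checks this by matching characteristic polynomials and eigenspaces, you by noting a common eigenbasis, which is the same content). Your remarks on the trivial cases $\hat g\propto g$ and $c_2=0$ are fine and do not change the argument.
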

\begin{proof}
The ``$\Leftarrow$'' implication follows immediately from
Definition~\ref{def:Levi-Civita}.
For the ``$\Rightarrow$'' implication we assume that the degree of mobility
of $g$ is 2.
By definition of Levi-Civita metrics, there exists $\bar g$ non proportional to $g$ and  such that $L=L(g,\bar g)$ is
diagonalisable.
Let $\sigma$ and $\bar\sigma$ be the corresponding solutions for
$g,\bar g$, respectively, according to \eqref{eqn:sigma.in.terms.of.g}. Then
$\sigma$ and $\bar\sigma$ are linearly independent.
Since the degree of mobility is $2$, we obtain that any other solution
$\hat\sigma$ of \eqref{eqn:linear.system} is a linear combination of $\sigma$
and~$\bar\sigma$, with $s\ne0\ne t$,
\[
  \hat\sigma = s\sigma+t\bar\sigma\,.
\]
Let $\hat L=L(\sigma,\hat\sigma)$. The characteristic polynomial of $\hat L$
is
\[
   \det(\hat L-\lambda\Id)
  = \det(\hat\sigma\sigma^{-1}-\lambda\Id)
  = \det((s\sigma+t\bar\sigma)\sigma^{-1}-\lambda\Id)
  = \det(tL-(\lambda-s)\Id)
\]
Therefore, any eigenvalue $\lambda$ of $\hat L$ corresponds with an
eigenvalue $\lambda'=\frac{\lambda-s}{t}$ of $L$, and vice versa. This confirms that the algebraic multiplicity is the same.
We now verify that also the geometric multiplicities coincide. For a specific
eigenvalue $\lambda$ of $\hat L$, the eigenspace is (note $t\ne0$)
\[
  \ker(\hat L-\lambda \Id)=\ker(tL+s\Id-\lambda\Id)
  = \ker(tL-t\lambda'\Id)
  = \ker(L-\lambda'\Id)\,.
\]
It follows that $\hat L$ is diagonalisable, since $L$ is diagonalisable.
\end{proof}

In order to have a local description of Levi-Civita metrics of non-constant curvature and with degree of mobility equal to 2, we can proceed taking into account the following considerations:
\begin{enumerate}
\item[1)] Since such metrics have degree of mobility equal to 2, in view of
Proposition \ref{prop:g.inverse}, it will be enough to have a local
description of a \emph{pair} of metrics $g$ and $\bar g$ in the considered
projective class in a common system of coordinates;
\item[2)] This is facilitated by the  diagonalisability of $L(g,\bar g)$ (see
Proposition \ref{prop:always.diago}) and by the integrability of its
eigendistributions (see Proposition \ref{prop:integrability}).
\end{enumerate}
Concerning point $2)$, let us assume to have $m$ integrable
eigendistributions $\mathcal{D}_1,\dots,\mathcal{D}_m$ of $L(g,\bar g)$, with
$\dim(\mathcal{D}_i)=k_i$. We can then choose
a system of coordinates $(x^1,\dots,x^n)$ on $M$
\begin{equation*}
(x^1,\dots,x^n)=\big(x^1_1,\dots,x^{k_1}_1,x^1_2,\dots,x^{k_2}_2,\dots,x^1_m,\dots,x^{k_m}_m\big)\,,\quad\sum_{i=1}^m k_i=n\,,
\end{equation*}
such that $\mathcal{D}_i=\langle \partial_{x_i^1}\,,\dots\,,\partial_{x_i^{k_i}} \rangle$. For cosmetic reasons, w.l.o.g., we assume that the coordinates corresponding to $1$-dimensional eigendistributions are the first ones, i.e.\ the above system of coordinates is rearranged as follows:
\begin{equation}\label{eqn:local.coordinates.ordered}
(x^1,\dots,x^n)=\big(x^1_1,x^1_2,\dots, x^1_r,x^{1}_{r+1},\dots,x^{k_{r+1}}_{r+1},\dots,x^1_m,\dots,x^{k_m}_m\big)\,,\quad\sum_{i=r+1}^m k_i=n-r\,,\,\,k_i\geq 2\,.
\end{equation}

\begin{proposition}[\cite{bolsinov_2011,bolsinov_2015,matveev_2012rel}]
	\label{prop:Levi-Civita.normal.forms}
Let $g$ be a Levi-Civita metric. Then there exists a metric $\bar g$ that is
projectively equivalent and non-proportional to $g$, and (almost everywhere,
in a sufficiently small neighborhood) local
coordinates~\eqref{eqn:local.coordinates.ordered} such that $g,\bar g$ assume
the form
\begin{subequations}\label{eqn:LC.general}
	\begin{align}
		g &= \sum_{i=1}^r P_i (dx^1_i)^2
		+\sum_{i=r+1}^m \left[ P_i
		\sum_{\alpha_i,\beta_i}^{k_i}
		(h_i(x_i))_{\alpha_i,\beta_i} dx_i^{\alpha_i}dx_i^{\beta_i}
		\right]
		\label{eqn:g1.LC.general}
		\\
		\bar g &= \sum_{i=1}^r P_i\rho_i (dx^1_i)^2
		+\sum_{i=r+1}^m \left[ P_i\rho_i
		\sum_{\alpha_i,\beta_i}^{k_i}
		(h_i(x_i))_{\alpha_i,\beta_i} dx_i^{\alpha_i}dx_i^{\beta_i}
		\right]
		\label{eqn:g2.LC.general}
	\end{align}
\end{subequations}
where $(x_i)$ stands for the set of coordinates $(x_i^1,\dots,x_i^{k_i})$ and where
\[
	P_i = \pm\prod (X_i-X_j)\qquad\text{and}\qquad
	\rho_i = \frac1{X_i\prod_\alpha X^{k_\alpha}_\alpha}\,,
\]
with $X_i$ denoting the eigenvalue of $L(g,\bar g)$ for the eigendistribution
$\mathcal{D}_i$. Here, the $k_i$ are numbers larger or equal to 2.
\end{proposition}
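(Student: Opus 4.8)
The plan is to reduce to the known splitting/normal-form results for pairs of projectively equivalent metrics with diagonalisable Benenti tensor, as established in \cite{bolsinov_2011,bolsinov_2015,matveev_2012rel}, and to assemble the stated form by tracking how the eigenvalues and eigendistributions of $L(g,\bar g)$ dictate a block decomposition of $g$ and $\bar g$. First I would invoke Definition~\ref{def:Levi-Civita}: by hypothesis there is a metric $\bar g$, projectively equivalent and non-proportional to $g$, with $L=L(g,\bar g)$ diagonalisable. By Proposition~\ref{prop:integrability} the eigendistributions $\mathcal{D}_1,\dots,\mathcal{D}_m$ of $L$ are integrable, and by the Frobenius theorem one can choose local coordinates adapted to this splitting. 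After the cosmetic reordering described in the paragraph preceding the proposition, these are exactly the coordinates \eqref{eqn:local.coordinates.ordered}, with the first $r$ coordinates spanning the $1$-dimensional eigendistributions and the remaining blocks of size $k_i\ge 2$.

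The core of the argument is then to establish the block-diagonal structure \eqref{eqn:LC.general} in these coordinates. Here I would quote the splitting theory: because $L$ is self-adjoint with respect to $g$ (Proposition~\ref{prop:Riem.Benenti}), $g$ decomposes as an orthogonal direct sum over the eigendistributions, so there are no cross-terms between different blocks; within a block $i$ corresponding to a $k_i$-dimensional eigendistribution, the eigenvalue $X_i$ is (by the integrability arguments in \cite{bolsinov_2011}) a function of the block coordinates $(x_i)$ alone, and in fact, by the structure of the projective-equivalence equations, $X_i$ depends only on $x_i$ in the sense recorded in the statement. The next step is to pin down the conformal factors: the classical Levi-Civita result forces $g$ restricted to block $i$ to be $P_i$ times a metric $h_i(x_i)$ depending on the $i$-th block of coordinates only, with $P_i = \pm\prod_{j\ne i}(X_i-X_j)$ the Levi-Civita product of eigenvalue differences. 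Finally, $\bar g$ is projectively equivalent to $g$ with the same eigendistributions, so it has the same block structure; the relation $\bar g = (\text{factor})\cdot g$ on each block, together with $L=\left|\det\bar g/\det g\right|^{1/(n+1)}\bar g^{-1}g$ having eigenvalue $X_i$ on $\mathcal{D}_i$, determines the factor $\rho_i = 1/\big(X_i\prod_\alpha X_\alpha^{k_\alpha}\big)$ after a short computation with determinants. This yields \eqref{eqn:g2.LC.general}.

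The main obstacle I anticipate is not the orthogonal splitting (which is the easy part) but the verification that each eigenvalue $X_i$ is constant along the other eigendistributions and that the intra-block factorisation has precisely the Levi-Civita form — this is the substantive content of \cite{levi-civita_1896,bolsinov_2011}, and in a self-contained write-up one would have to reproduce the differential-geometric identities governing how $\mathrm{d}X_i$ interacts with the eigendistribution $\mathcal{D}_j$ for $j\ne i$; since those are available in the cited references, the proof here can legitimately proceed by citation. A secondary technical point is the phrase ``almost everywhere, in a sufficiently small neighborhood'': one must restrict to the open dense set where the number and multiplicities of the eigenvalues of $L$ are locally constant (so that $\mathcal{D}_i$ are genuine smooth distributions), which is harmless since Lemma~\ref{lemma:indep.mult.eigen} guarantees these multiplicities are intrinsic to $g$ and not an artefact of the choice of $\bar g$. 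With those caveats, the proposition follows by combining Propositions~\ref{prop:Riem.Benenti} and~\ref{prop:integrability} with the normal-form statements of \cite{bolsinov_2011,bolsinov_2015,matveev_2012rel}.
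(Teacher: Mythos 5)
Your proposal is correct and follows essentially the same route as the paper: the paper states this proposition as a result quoted from \cite{bolsinov_2011,bolsinov_2015,matveev_2012rel}, and its surrounding discussion sets up exactly the ingredients you use — diagonalisability of $L(g,\bar g)$ (Proposition~\ref{prop:always.diago}), integrability of the eigendistributions (Proposition~\ref{prop:integrability}), and the adapted coordinates~\eqref{eqn:local.coordinates.ordered} — while deferring the substantive Levi-Civita block factorisation and the behaviour of the eigenvalues $X_i$ to the cited references, just as you do. The ``almost everywhere'' caveat is handled in the paper via Remark~\ref{rmk:higher.multiplicities} (working near points where the images of the $X_i$ are disjoint), which matches your restriction to where the eigenvalue multiplicities are locally constant.
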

\begin{remark}\label{rmk:higher.multiplicities}
	The metrics in~\eqref{eqn:LC.general} are local. According to
	\cite{bolsinov_2011} they can be achieved after a coordinate change, in a
	sufficiently small neighborhood of a point where $\mathrm{image}(X_i)\cap\mathrm{image}(X_j)=\emptyset$, $\forall\,i\neq j$. We remark that one may assume $X_i>X_{i+1}$ for
	$1\leq i\leq r$ and, respectively, for $r+1\leq i\leq m$.
	
	Due to \cite[Theorem~3]{bolsinov_2015}, the $X_i$ are constants for
	$i\geq r+1$, i.e.\ for the building blocks with geometric multiplicity
	$k_i\geq2$, see also \cite{Weyl_1921,levi-civita_1896} and \cite[Lemma
	6]{kiosak_2009}.
	For $i\leq r$, in the coordinates of \eqref{eqn:LC.general} the
	$X_i=X_i(x_i^1)$ are univariate functions.
\end{remark}

\subsection{Levi-Civita metrics of dimension 3}\label{sec:Cases.111.21}

 The current paper concerns itself with Levi-Civita metrics in dimension~$3$. Due to Proposition~\ref{prop:Levi-Civita.normal.forms} we therefore have to study two distinct cases:

\medskip

\noindent
	 \boxed{\textbf{[1-1-1] metrics}} In this case $r=m=3$, and 
	 \eqref{eqn:LC.general} gives
			\begin{subequations}\label{eqn:111}
				\begin{align}
					g &= \pm(X_1-X_2)(X_1-X_3)\,(dx^1)^2
							\pm(X_2-X_1)(X_2-X_3)\,(dx^2)^2
							\pm(X_3-X_1)(X_3-X_2)\,(dx^3)^2
					\label{eqn:111.1}
					\\
					\bar g &=
					\pm\frac{(X_1-X_2)(X_1-X_3)}{X_1^2X_2X_3}\,(dx^1)^2
						\pm\frac{(X_2-X_1)(X_2-X_3)}{X_1X_2^2X_3}\,(dx^2)^2
						\pm\frac{(X_3-X_1)(X_3-X_2)}{X_1X_2X_3^2}\,(dx^3)^2
					\label{eqn:111.2}
				\end{align}
			\end{subequations}
			and without loss of generality it may be assumed that
			$X_3(x^3)>X_2(x^2)>X_1(x^1)$. Indeed, the eigenvalues of
			$L(g,\bar g)$ are not
			allowed to coincide as otherwise already the following case would
			occur.

\medskip

\noindent
	\boxed{\textbf{[2-1] metrics}} In this case $r=1$, $m=2$ and 
	\eqref{eqn:LC.general} gives
			\begin{subequations}\label{eqn:21.before}
			\begin{align}\label{eqn:21.before.1}
				g &= \pm(X_1-X_2)\,\left[
							(dx^1)^2
							\mp h
						\right]
				\\
				\bar g &= \pm\frac{X_1-X_2}{X_1X_2^2}\,\left[
							\frac{(dx^1)^2}{X_1}
							\mp \frac{h}{X_2}
						\right]
			\end{align}
			\end{subequations}
			where $h=\sum_{i,j=2}^3 h_{ij}dx^idx^j$ is any 2-dimensional metric, $X_1=X_1(x^1)$ and $X_2$ is a constant
			(cf. Remark \ref{rmk:higher.multiplicities}).
			For cosmetic reasons we are going to rename the coordinates
			such that the manifold underlying $h$ has coordinates $x$ and $y$
			and such that the manifold underlying one-dimensional distribution
			has the coordinate $z$. We thus have
			\begin{subequations}\label{eqn:21}
			\begin{align}
				g &= \zeta(z)\,(h \pm dz^2)
				\label{eqn:21.1}
				\\
				\bar g &= \frac{\zeta(z)}{Z(z)\,\rho^2}\,\left(
							\frac{h}{\rho} \pm \frac{dz^2}{Z(z)}
						\right)
				\label{eqn:21.2}
			\end{align}
			\end{subequations}
			where $h=h_{11}dx^2+2h_{12}dxdy+h_{22}dy^2$, $h_{ij}=h_{ij}(x,y)$, and
			$$
				\zeta(z)=Z(z)-\rho\,,\quad\rho\in\R\,.
			$$
			The manifold $M$ underlying $g$ and $\bar g$ is a product of a
			2-dimensional manifold $M_2$ with coordinates $x,y$ and a
			1-dimensional manifold $M_1$ with coordinate $z$.

\medskip

\noindent
Note that the case $r=0$, $m=1$ cannot appear.
Indeed, if $g$ were such a Levi-Civita metric then there would exist a metric $\bar{g}$ projectively equivalent to $g$ but not proportional to it. The tensor \eqref{eqn:benenti.tensor} thus has one eigenvalue of multiplicity three, i.e.\ it is a multiple of the identity. Consequently $g$ and $\bar{g}$ must be already proportional, contradicting the hypothesis to be of Levi-Civita type.

\begin{example}\label{ex:LC.metrics.riemannian}
	For a  Riemannian $3$-dimensional Levi-Civita [1-1-1] metric $g$ we find
	local coordinates $(x_1^1,x_2^1,x_3^1)=(x^1,x^2,x^3)$ such that
	\[
		g = |X_1-X_2|\,|X_1-X_3| \,(dx^1)^2
			+|X_2-X_1|\,|X_2-X_3|\,(dx^2)^2
			+|X_3-X_1|\,|X_3-X_2|\,(dx^3)^2
	\]
	where $X_i=X_i(x^i)$ are univariate functions, which are also the
	eigenvalues of \eqref{eqn:benenti.tensor}.
	Reordering the coordinates, we may w.l.o.g.\ assume $X_1>X_2>X_3$.
	The above metric $g$ appears for the first time
	in~\cite{levi-civita_1896}.
	Some differential projective aspects of such metric is discussed in
	\cite{solodovnikov_1956,matveev_2012rel}.
\end{example}

For Riemannian Levi-Civita metrics in arbitrary dimension $n\geq3$ that admit
a projective vector field $v$, \cite{solodovnikov_1956} provides a collection
of defining ODEs under the requirement that the metric consists of solely
1-dimensional blocks, i.e.\ the metrics of
Example~\ref{ex:LC.metrics.riemannian} and their higher-dimensional
counterparts.

\subsection{Projective vector fields of Levi-Civita metrics}

The following lemma can be obtained from \cite{bolsinov_2011,bolsinov_2015cproj,solodovnikov_1956}.

\begin{lemma}
\label{lemma:eigenvalues.block.diagonals}
Let $g$ be an $n$-dimensional Levi-Civita metric of the form
\eqref{eqn:g1.LC.general} and not of constant curvature.
Let $v$ be a projective vector field for $g$.
Then the components of $v$, in the system of coordinates ~\eqref{eqn:local.coordinates.ordered}, are
\[
 \left(v_1(x_1^1),\dots,v_r(x_r^1),v_{r+1}(x_{r+1}),\dots,v_m(x_m)\right)
\]
where, for $1\leq i\leq r$, $v_i$ are univariate functions of variable $x_i^1$ and, for $i>r$,
$v_i=(v_i^1,\dots,v_i^{k_i})$
depend on the coordinates $x_i=(x_i^1,\dots,x_i^{k_i})$.
\end{lemma}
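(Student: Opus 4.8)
The plan is to exploit the splitting-gluing structure of Levi-Civita metrics together with the differential equations for the eigenvalues established earlier, in particular Lemma~\ref{la:eigenvalues.L.roots.solodovnikov} and Lemma~\ref{la:proj.action.to.metrics}. First I would pass to the normal form~\eqref{eqn:LC.general} for the pair $(g,\bar g)$, where the Benenti tensor $L=L(g,\bar g)$ is diagonal with block structure and the eigendistributions $\mathcal D_i=\langle\partial_{x_i^1},\dots,\partial_{x_i^{k_i}}\rangle$ are coordinate distributions. A projective vector field $v$ of $g$ is also a projective vector field of every metric in the projective class (since $\Phi$ preserves $\solSp$), hence $\lie_vL=-bL^2+(d-a)L+c\,\Id$ by~\eqref{eqn:LvL.solpol}. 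Because the right-hand side is a polynomial in $L$, it is block-diagonal with respect to the same splitting as $L$; consequently $\lie_vL$ preserves each eigendistribution $\mathcal D_i$. By the Corollary following Lemma~\ref{la:eigenvalues.L.roots.solodovnikov}, $[v,w]$ is again an eigenvector of $L$ with the same eigenvalue whenever $w$ is, so $[v,\mathcal D_i]\subseteq\mathcal D_i$ for every $i$; that is, $v$ is an infinitesimal automorphism of each eigendistribution.

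Next I would translate this preservation property into the stated coordinate form. For a block with $k_i\ge2$, Remark~\ref{rmk:higher.multiplicities} tells us that the eigenvalue $X_i$ is a constant, while for a $1$-dimensional block ($i\le r$) the eigenvalue $X_i=X_i(x_i^1)$ is a genuine non-constant function (non-constancy of at least one such $X_i$ is forced because $g$ is not of constant curvature). Using Lemma~\ref{la:eigenvalues.L.roots.solodovnikov}, for $i\le r$ we get $v(X_i)=\solpol_A(X_i)$, i.e.\ $v$ applied to $X_i$ equals a fixed polynomial evaluated at $X_i$; since $X_i$ depends only on $x_i^1$ and $X_i'(x_i^1)\ne0$ on an open dense set, this shows that the $x_i^1$-component $v_i$ of $v$ can be solved for as a function of $X_i$ alone, hence of $x_i^1$ alone. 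The fact that $v$ preserves $\mathcal D_i$ then forces $v_i$ to have no components in the other coordinate directions, i.e.\ $v=\sum_i v_i(x_i^1)\partial_{x_i^1}+\dots$ with the first $r$ blocks contributing univariate pieces.

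For the higher-dimensional blocks $i>r$ the argument is slightly different, because $X_i$ is constant and Lemma~\ref{la:eigenvalues.L.roots.solodovnikov} carries no information. Here I would use Lemma~\ref{la:proj.action.to.metrics}: equation~\eqref{eq:Lvg1} expresses $\lie_vg$ as a linear combination of $g$ and $gL$, both of which are block-diagonal with the $i$-th block proportional to $P_i\,h_i(x_i)$. Since the off-diagonal blocks of $\lie_vg$ vanish and the diagonal blocks only involve the coordinates $x_i$ of the respective factor, the flow of $v$ must preserve the product decomposition $M=\prod_i M_i$ near a generic point; combined with $[v,\mathcal D_i]\subseteq\mathcal D_i$ this yields that the restriction of $v$ to the $i$-th factor, namely $v_i=(v_i^1,\dots,v_i^{k_i})$, depends only on $x_i=(x_i^1,\dots,x_i^{k_i})$ and has no components transverse to $M_i$. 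The main obstacle I anticipate is making rigorous the passage from ``$\lie_vL$ and $\lie_vg$ are block-diagonal'' to ``$v$ itself splits as a sum of vector fields on the factors'': one must rule out, for the $1$-dimensional blocks, the degenerate situation in which $X_i$ is locally constant (handled by non-constant curvature and the genericity assumption $\mathrm{image}(X_i)\cap\mathrm{image}(X_j)=\emptyset$ from Remark~\ref{rmk:higher.multiplicities}), and one must carefully track the coordinate dependence of the $v_i^\alpha$ using the explicit shape of $g$ in~\eqref{eqn:g1.LC.general} together with the commutator conditions, essentially reproducing the separation-of-variables computation underlying Solodovnikov's original ODEs.
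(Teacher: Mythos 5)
Your core mechanism is sound and is essentially the same engine the paper runs: since $\lie_vL=\solpol_A(L)$ is a polynomial in the diagonal tensor $L$, it respects the eigendecomposition, and the Corollary to Lemma~\ref{la:eigenvalues.L.roots.solodovnikov} gives $(L-\lambda\Id)[v,w]=0$ for every eigenvector field $w$, hence $[v,\mathcal{D}_i]\subseteq\mathcal{D}_i$ for every block. What you did not notice is that this already finishes the proof in one line: taking $w=\partial_{x_i^\beta}$ one has $[v,\partial_{x_i^\beta}]=-\sum_{j,\alpha}\partial_{x_i^\beta}(v_j^\alpha)\,\partial_{x_j^\alpha}$, and membership in $\mathcal{D}_i$ forces $\partial_{x_i^\beta}(v_j^\alpha)=0$ for all $j\ne i$; letting $i$ range over all blocks gives exactly the stated dependence of the components. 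The paper does this even more directly, without the eigenvalue equation $\solpol_A(\lambda)=v(\lambda)$: it computes the off-block entry of $\lie_vL=\solpol_A(L)$ in coordinates and reads off $(\lambda_k-\lambda_\ell)\,v^i_{,j}=0$. If you do route the argument through the Corollary, note that it is stated for dimension $3$; for the $n$-dimensional claim you should use the footnoted non-null version of Lemma~\ref{la:eigenvalues.L.roots.solodovnikov} applied to a non-null frame of each $\mathcal{D}_i$, which exists because $g|_{\mathcal{D}_i}$ is non-degenerate.

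The parts of your argument beyond this are where the real problems sit. For the $1$-dimensional blocks you assert that every $X_i$, $i\le r$, is non-constant; this is false: constant eigenvalues on $1$-dimensional blocks occur and are analysed at length in the paper (Propositions~\ref{prop:111.2constantEVs} and~\ref{prop:111.1constantEVs}, and the constant-$\zeta$ case of Corollary~\ref{cor:constant.Z}), and non-constant curvature of $g$ does not force any particular $X_i$ (nor, in fact, any of them) to be non-constant. Where $X_i$ is constant the relation $v(X_i)=\solpol_A(X_i)$ carries no information about $v_i$, so your ``solve for $v_i$'' step collapses there. Similarly, the appeal to Lemma~\ref{la:proj.action.to.metrics} and to ``the flow preserves the product decomposition'' for $i>r$ is both unnecessary and not justified as stated: block-diagonality of $\lie_vg$ alone does not split $v$. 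Fortunately none of this is needed: the bracket condition $[v,\mathcal{D}_j]\subseteq\mathcal{D}_j$ for all $j$, written in coordinates as above, handles all blocks uniformly (constant eigenvalues included) and closes the ``main obstacle'' you flag at the end.
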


\begin{proof}
	Let $\bar g$ be the metric \eqref{eqn:g2.LC.general} and let $L=L(g,\bar
	g)$.
	Since $g$ is a Levi-Civita metric, $L$ is diagonal.
	Let~$i$ and~$j$ be indices such that $x^i$ and $x^j$ are coordinates
	on different eigendistributions, i.e.\ such that
	$\frac{\partial}{\partial x^i}$ and
	$\frac{\partial}{\partial x^j}$ belong to different eigenspaces of $L$.
	We denote the associated eigenvalues by $\lambda_k, \lambda_{\ell}$,
	respectively, with $\lambda_k\ne\lambda_{\ell}$.
	Because $L$ is diagonal, we infer $L\indices{^i_j}=0$.
	For the entry on the $i$-th row and $j$-th column
	of Equation~\eqref{eqn:LvL.solpol}, we therefore find:
	\[
		0 = \lie_vL\indices{^i_j}
		=
		v^cL\indices{^i_{j,c}}
		- v\indices{^i_{,c}}\,L\indices{^c_j}
		+ v\indices{^c_{,j}}\,L\indices{^i_c}
		=
		(\lambda_k-\lambda_{\ell})\,v\indices{^i_{,j}}\,,
	\]
	where we use that $L$ is diagonal.
	Since $\lambda_k-\lambda_{\ell}\ne0$, we conclude $v\indices{^i_{,j}}=0$.
	Repeating this for all pairs $(i,j)$ as above, we confirm the assertion.
\end{proof}

\begin{example}\label{ex:v.block}
	Let us consider the metric~\eqref{eqn:111.1} of non-constant curvature.
	According to Lemma~\ref{lemma:eigenvalues.block.diagonals}, a projective
	vector field $v$ for this metric has the following components:
	\[
		\big(v_1(x_1^1),v_2(x_2^1),v_3(x_3^1)\big)
		= \big(v^1(x),v^2(y),v^3(z)\big)\,.
	\]
	For the metric \eqref{eqn:21.before.1}, of non-constant curvature, a
	projective vector field $v$ has the following components:
	\[
		\big(v_1(x_1^1),v_2(x_2^1,x_2^2)\big)\,.
	\]
	In view of the consideration after equations \eqref{eqn:21.before.1}, for
	the metric~\eqref{eqn:21.1}, of non-constant curvature, a projective
	vector field $v$ has the following components:
	\[
		\big(v^1(x,y),v^2(x,y),v^3(z)\big)\,.	
	\]
\end{example}


\section{Levi-Civita metrics of type [1-1-1]}\label{sec:111}


We begin our discussion with the case of Levi-Civita metrics of type [1-1-1],
i.e.\ metrics that may be put into the form~\eqref{eqn:111.1} at least in a
small neighborhood around almost every point. This case includes Riemannian
metrics of the form~\eqref{eqn:111.1}. In the current section, we admit
Riemannian as well as Lorentzian signature.
The partner metric for \eqref{eqn:111.1} is \eqref{eqn:111.2} such that the
eigenvalues of $L=L(g,\bar g)$ are given by three univariate functions, more
precisely
\begin{equation}\label{eqn:L.diagonal.1.1.1}
	L = X_1(x^1)\,\partial_{x^1}\otimes dx^1
		+ X_2(x^2)\,\partial_{x^2}\otimes dx^2
		+ X_3(x^3)\,\partial_{x^3}\otimes dx^3
\end{equation}

\begin{remark}\label{rmk:transformations.111}
	The form of the metric~\eqref{eqn:111.1} is preserved under any
	simultaneous affine transformation of the coordinates,
	\[
		x^i_\text{new}=\mu x^i_\text{old}+\kappa_i\,,\quad
		\mu,\kappa_i\in\R\,,\ \mu\ne0\,,
	\]
	if we simultaneously redefine
	\[
		X_i^\text{new}(x^i_\text{new})
		= \frac{1}{|\mu|}\ X_i^\text{old}\left(
				\frac{x^i_\text{new}-\kappa_i}{\mu}
			\right)+t\,,
	\]
	with $t\in\R$.
	Note that, in particular, we may therefore transform
	$L_\text{new}=L_\text{old}-t\Id$, even without any modification of the
	coordinate system. Indeed, such a transformation corresponds with
	choosing a different metric $\bar g$ (respectively $\bar\sigma$
	via~\eqref{eqn:sigma.in.terms.of.g}) projectively equivalent, but
	non-proportional, to $g$. This even allows us to transform, without
	changing the metric $g$,
	\[
		L_\text{new}=\kappa L_\text{old}-t\Id
	\]
	for $\kappa,t\in\R$, $\kappa\ne0$.
\end{remark}


\noindent Equation~\eqref{eqn:Andreas.Solod.1}, in the [1-1-1] case, provide
a set of ODEs for the eigenvalues of \eqref{eqn:L.diagonal.1.1.1}, i.e.\ for
$X_1$, $X_2$ and $X_3$. More precisely, we have the following proposition.

\begin{lemma}\label{la:Solod.eq.1}
Let $g$ be a Levi-Civita metric of type [1-1-1] admitting a projective vector
field $v$. Then $v=v^i\partial_{x^i}$ where $v^i=v^i(x^i)$ are univariate
functions. Moreover the following system of ODEs
is satisfied:
\begin{subequations}\label{eqn:ODEs.111}
\begin{align}
	v^i \frac{dX_{i}}{dx^i}
	&= -b\,X^2_i
	+ (d-a)\,X_i
	+ c
	\label{eqn:ODEs.111.first}
	\\
	\frac{dv^{i}}{dx^i}
	&= -(a+d)
	\label{eqn:ODEs.111.second}
\end{align}
\end{subequations}
\end{lemma}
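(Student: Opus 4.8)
The plan is to derive~\eqref{eqn:ODEs.111.first} from Lemma~\ref{la:eigenvalues.L.roots.solodovnikov}, applied to the three eigenvalues of the diagonal Benenti tensor~\eqref{eqn:L.diagonal.1.1.1}, and~\eqref{eqn:ODEs.111.second} by computing $\lie_vg$ in two different ways and comparing. Throughout we use that a Levi-Civita metric of type [1-1-1] of non-constant curvature has degree of mobility $2$ by Remark~\ref{rmk:dom.is.two.or.cc}, so that Lemmas~\ref{la:eigenvalues.L.roots.solodovnikov} and~\ref{la:proj.action.to.metrics} both apply, with $\bar g$ the partner metric~\eqref{eqn:111.2} and $a,b,c,d$ the entries of the matrix $A$ describing $\lie_v$ in the associated basis $(\sigma,\bar\sigma)$, cf.~\eqref{eqn:abcd}.

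First, that $v=v^i\partial_{x^i}$ with $v^i=v^i(x^i)$ is precisely Lemma~\ref{lemma:eigenvalues.block.diagonals} in the case of three one-dimensional eigendistributions, see also Example~\ref{ex:v.block}. Since each eigenvalue $X_i$ of $L$ depends on $x^i$ only and each $v^j$ on $x^j$ only, we have $v(X_i)=v^i\,\frac{dX_i}{dx^i}$; hence evaluating the identity~\eqref{eqn:Andreas.Solod.1} of Lemma~\ref{la:eigenvalues.L.roots.solodovnikov} at the eigenvalue $\lambda=X_i$ gives $v^i\,\frac{dX_i}{dx^i}=\solpol_A(X_i)=-bX_i^2+(d-a)X_i+c$, which is~\eqref{eqn:ODEs.111.first}.

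For~\eqref{eqn:ODEs.111.second} I would compute $(\lie_vg)_{ii}$ directly. Since $g$ is diagonal, $g_{ii}=\pm\prod_{j\neq i}(X_i-X_j)$, and $v$ has the block form above, the off-diagonal entries of $\lie_vg$ vanish identically, while for each fixed $i$ (no summation on $i$)
\[
	(\lie_vg)_{ii}=v^k\,\partial_k g_{ii}+2\,g_{ii}\,\frac{dv^i}{dx^i}.
\]
Dividing by $g_{ii}$, using $\partial_k\ln|g_{ii}|=\sum_{j\neq i}\partial_k\ln|X_i-X_j|$, and inserting $v^k\partial_k X_\ell=v^\ell\,\frac{dX_\ell}{dx^\ell}=\solpol_A(X_\ell)$ from~\eqref{eqn:ODEs.111.first}, each summand turns into the difference quotient $\frac{\solpol_A(X_i)-\solpol_A(X_j)}{X_i-X_j}=-b(X_i+X_j)+(d-a)$, so that
\[
	\frac{(\lie_vg)_{ii}}{g_{ii}}=-b\,(X_i+\tr L)+2(d-a)+2\,\frac{dv^i}{dx^i}.
\]
On the other hand, Equation~\eqref{eq:Lvg1} of Lemma~\ref{la:proj.action.to.metrics} with $n=3$ and $L$ diagonal gives $\frac{(\lie_vg)_{ii}}{g_{ii}}=-4a-b\,(\tr L+X_i)$. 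Comparing the two displays, the eigenvalue-dependent terms cancel and there remains $2(d-a)+2\,\frac{dv^i}{dx^i}=-4a$, i.e.~$\frac{dv^i}{dx^i}=-(a+d)$, which is~\eqref{eqn:ODEs.111.second}.

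The only non-formal point is this last comparison: one must verify that the explicit eigenvalue-dependent terms produced by differentiating the product $\prod_{j\neq i}(X_i-X_j)$ --- once the already-established relations~\eqref{eqn:ODEs.111.first} are substituted --- coincide exactly with the term $-b(\tr L+X_i)$ coming from Lemma~\ref{la:proj.action.to.metrics}. That cancellation is what makes~\eqref{eqn:ODEs.111.second} a first-order ODE for $v^i$ with constant right-hand side; everything else is routine bookkeeping.
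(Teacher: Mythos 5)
Your proposal is correct and follows essentially the same route as the paper: the block form of $v$ comes from Lemma~\ref{lemma:eigenvalues.block.diagonals} (Example~\ref{ex:v.block}), Equation~\eqref{eqn:ODEs.111.first} is Lemma~\ref{la:eigenvalues.L.roots.solodovnikov} evaluated at $\lambda=X_i$, and Equation~\eqref{eqn:ODEs.111.second} is obtained by resubstituting~\eqref{eqn:ODEs.111.first} into~\eqref{eq:Lvg1} and solving for $\frac{dv^i}{dx^i}$. The only difference is that you spell out the cancellation of the eigenvalue-dependent terms explicitly, which the paper leaves as a brief remark; your computation of that cancellation is correct.
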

\begin{proof}
In view of the first part of Example \ref{ex:v.block}, the components $v^i$
of the projective vector field $v$ are such that $v^i=v^i(x^i)$. Then
\eqref{eqn:ODEs.111.first} is nothing but \eqref{eqn:Andreas.Solod.1} with
$\lambda=X_i(x^i)$.
Resubstituting \eqref{eqn:ODEs.111.first} into \eqref{eq:Lvgg}, we can solve
the obtained system w.r.t.\ the first derivatives of the component $v^i$ of
the projective vector field $v$, thus obtaining ~\eqref{eqn:ODEs.111.second}.
\end{proof}

\noindent The following proposition shows that all projective symmetries
Levi-Civita metrics of type [1-1-1] arise from homotheties of 1-dimensional
metrics.
\begin{proposition}\label{prop:splitting.gluing.111}

\

	\noindent(i) Let $g$ be a Levi-Civita metric of type [1-1-1] admitting a
	projective vector field $v= v^i\partial_{x^i}$, $v^i= v^i(x^i)$.
	Then $v$ is a homothetic vector field for the 1-dimensional metric
	$(dx^i)^2$.
	
	\noindent(ii) For $i\in\{1,2,3\}$, let $h_i$ be 1-dimensional metrics.
	Assume $u^i\partial_{x^i}\ne0$ (no summation), $u^i=u^i(x^i)$, is a homothetic vector field for $h_i$,
	$\lie_{u^i}h_i=-2Bh_i$ for a common constant $B\in\R$.
	Let $X_1,X_2,X_3$ be three functions satisfying
	\[
		(-2Bx^i+k_i) \frac{dX_{i}}{dx^i}
		= -b\,X^2_i
		+ \mu\,X_i
		+ c
	\]
	for $k_i\in\R$ and common constants $b,c,\mu\in\R$.
	Then the $X_i$ define a [1-1-1]-type Levi-Civita metric~\eqref{eqn:111}
	with projective vector field $v=u^i\partial_{x^i}$.
\end{proposition}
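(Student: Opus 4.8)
\medskip

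\noindent\textbf{Part (i).} The plan is to read off the claim directly from the ODEs already derived in Lemma~\ref{la:Solod.eq.1}. By that lemma, a projective vector field $v=v^i\partial_{x^i}$ of a type [1-1-1] Levi-Civita metric has univariate components $v^i=v^i(x^i)$ satisfying $\frac{dv^i}{dx^i}=-(a+d)$ for the constants $a,d$ attached to $v$ via~\eqref{eqn:abcd}. For the one-dimensional metric $h_i=(dx^i)^2$ (no summation), the Lie derivative along $v^i\partial_{x^i}$ is $\lie_{v^i\partial_{x^i}}(dx^i)^2 = 2\frac{dv^i}{dx^i}\,(dx^i)^2 = -2(a+d)\,(dx^i)^2$. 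Since $a+d$ is a constant independent of $i$, this exhibits $v^i\partial_{x^i}$ (and hence $v$) as a homothetic vector field for $(dx^i)^2$ with common homothety constant $B:=a+d$; in particular it is Killing iff $a+d=0$. This completes (i).

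\medskip

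\noindent\textbf{Part (ii).} This is the converse ``gluing'' direction. Given the one-dimensional homotheties $u^i\partial_{x^i}$ with $\lie_{u^i}h_i=-2Bh_i$, writing $h_i=\varphi_i(x^i)(dx^i)^2$ one gets $u^i\frac{d}{dx^i}\log\varphi_i + 2\frac{du^i}{dx^i} = -2B$; but after the coordinate rescaling absorbing $\varphi_i$ one may take $h_i=(dx^i)^2$, whence $\frac{du^i}{dx^i}=-B$ and so $u^i=-Bx^i + \tfrac12 k_i'$, i.e.\ $u^i$ is affine of slope $-B$. Relabelling, this is exactly $u^i = -2Bx^i + k_i$ up to the harmless factor-of-two/affine normalisations in Remark~\ref{rmk:transformations.111} — I will fix the normalisation so that the hypothesis $u^i\frac{dX_i}{dx^i} = -bX_i^2+\mu X_i + c$ matches~\eqref{eqn:ODEs.111.first} with $a+d=2B$ (hence $\mu=d-a$, and $a,d$ recovered from $a+d=2B$, $d-a=\mu$). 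Now define $g$ by~\eqref{eqn:111.1} from these $X_i$ (which is a genuine Levi-Civita metric of type [1-1-1] provided the $X_i$ have pairwise disjoint images, which holds on a small enough neighbourhood after the affine shifts of Remark~\ref{rmk:transformations.111}), and set $\bar g$ by~\eqref{eqn:111.2}. The claim is that $v:=u^i\partial_{x^i}$ is projective for $g$. To verify this, the plan is to check that $v$ satisfies the defining condition for a projective vector field, and the cleanest route is via the metrisation space: with $\sigma,\bar\sigma$ the weighted tensors of $g,\bar g$, I will show $\lie_v\sigma = a\sigma + b\bar\sigma$ and $\lie_v\bar\sigma = c\sigma + d\bar\sigma$, since an element of $\projalg(g)$ acting on $\solSp$ by a constant matrix is equivalent (by Proposition~\ref{prop:lin.system}, Remark~\ref{rem.proj.symm.as.point}, and the invariance of $\solSp$) to being projective. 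Concretely, $\lie_v g$ and $\lie_v \bar g$ can be computed coordinate-wise from the explicit forms~\eqref{eqn:111.1}--\eqref{eqn:111.2}: each diagonal entry of $g$ is $\pm\prod_{j\ne i}(X_i-X_j)\,(dx^i)^2$, and $\lie_v$ acts on $X_i$ by $u^i\frac{dX_i}{dx^i} = -bX_i^2+\mu X_i+c = \solpol_A(X_i)$ and on $(dx^i)^2$ by $2\frac{du^i}{dx^i}=-2B=-(a+d)$. Substituting and using the algebraic identity $\sum_{j\ne i}\bigl(\solpol_A(X_i)-\solpol_A(X_j)\bigr)/(X_i-X_j) = -2bX_i + (\text{the two remaining }X) \cdot(-b) + \dots$ — more precisely that $\solpol_A$ is a quadratic, so divided differences of $\solpol_A$ are affine in the $X$'s — one recovers exactly the right-hand sides of~\eqref{eq:Lvg1} and~\eqref{eq:Lvg2} with $L=L(g,\bar g)$ diagonal with entries $X_i$. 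By Lemma~\ref{la:proj.action.to.metrics} (read in reverse) this is precisely the statement that $\lie_v\sigma = a\sigma+b\bar\sigma$, $\lie_v\bar\sigma=c\sigma+d\bar\sigma$, i.e.\ $v$ preserves $\solSp$, hence $v$ is a projective vector field for $g$.

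\medskip

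\noindent\textbf{Main obstacle.} The bookkeeping in Part (ii) is where care is needed: one must match the normalisation of $u^i$ (the factor $2$, the constant $B$ versus $a+d$) with the constants $a,b,c,d$ so that the one ODE hypothesis $u^i\frac{dX_i}{dx^i}=-bX_i^2+\mu X_i+c$ really does coincide with the full system~\eqref{eqn:ODEs.111}, and one must verify the purely algebraic identity that the divided differences of the quadratic $\solpol_A$ assemble into the $-bL^2+(d-a)L+c\Id$ pattern appearing in~\eqref{eqn:LvL.solpol} (equivalently, into the $g_{ik}L^k_{\ j}$ and $\tr(L)$ terms of~\eqref{eq:Lvg1}). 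This identity is elementary — it is just the statement that a quadratic $p$ satisfies $p(x)+p(y)+p(z) = $ (linear combination of $1$, $x+y+z$, and symmetric functions) and that $\bigl(p(X_i)-p(X_j)\bigr)/(X_i-X_j)$ is affine — but it is the crux of the computation, and I would present it as a short lemma or inline identity rather than grinding through all three coordinates separately. The existence-of-neighbourhood caveat (disjoint images of the $X_i$, non-constant curvature so that the metric is genuinely of Levi-Civita type and not of constant curvature) is handled exactly as in Remark~\ref{rmk:higher.multiplicities} and Remark~\ref{rmk:transformations.111} and needs only to be stated, not reproved.
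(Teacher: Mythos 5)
Part (i) of your proposal is exactly the paper's argument: Lemma~\ref{la:Solod.eq.1} gives $\frac{dv^i}{dx^i}=-(a+d)$, hence $\lie_{v^i\partial_{x^i}}(dx^i)^2=-2(a+d)(dx^i)^2$, and the homothety constant is common to all three directions. Your handling of the normalisation of $u^i$ and of the constants $(a,b,c,d)$ versus $(B,\mu)$ in part (ii) is also fine (and you correctly flag the factor-of-two bookkeeping).

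For part (ii) the paper simply declares the statement ``immediate from Lemma~\ref{la:Solod.eq.1}'', i.e.\ it treats the derivation of the ODE system~\eqref{eqn:ODEs.111} as reversible; your route is a fleshed-out version of this, verifying that $\lie_vg$ and $\lie_v\bar g$ take the form~\eqref{eq:Lvgg}, equivalently $\lie_v\sigma=a\sigma+b\bar\sigma$, $\lie_v\bar\sigma=c\sigma+d\bar\sigma$. The one genuine gap is the final inference: you conclude that $v$ is projective ``by Proposition~\ref{prop:lin.system}, Remark~\ref{rem.proj.symm.as.point}, and the invariance of $\solSp$'', but those facts only give the \emph{forward} implication ($v$ projective $\Rightarrow$ $\lie_v\sigma\in\solSp$). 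What you need is the converse: if $\lie_v$ maps the nondegenerate solution $\sigma$ (and $\bar\sigma$) into $\solSp$, then $v$ is projective. This converse is in fact true --- infinitesimally, $0=\lie_v\big(D(\sigma)\big)=D(\lie_v\sigma)+(\lie_vD)(\sigma)$, where $D$ is the metrisability operator~\eqref{eqn:linear.system}, and $(\lie_vD)(\sigma)$ is a pointwise-linear expression in the trace-free part of $\lie_v\Gamma$ which, for nondegenerate $\sigma$, vanishes only if that trace-free part vanishes, i.e.\ only if $v$ is projective --- but this algebraic injectivity is nowhere in the paper and is not implied by the results you cite, so it must be proved (or sidestepped). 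The cheaper fix, and the one consistent with how the paper handles the analogous [2-1] statement in Proposition~\ref{prop:gluing}, is to verify directly that $v=u^i\partial_{x^i}$ satisfies the point-symmetry conditions~\eqref{eqn:cond.proj} for the projective connection of the metric~\eqref{eqn:111.1} built from the $X_i$; with the block form of $v$ and the ODE hypothesis this reduces to the same divided-difference identity for the quadratic $\solpol_A$ that you already isolate as the crux. With that one step repaired, your proof is complete and is essentially the computation the paper leaves implicit.
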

\begin{proof}
	For part (i), consider $h=(dx^i)^2$ and $u=f(x^i)\partial_{x^i}$. Then $u$ is
	homothetic for $h$ if
	\[
		\lie_uh = -2f'(x^i)h = -2Bh
	\]
	for constant $B\in\R$.
	The claim then follows from Equation~\eqref{eqn:ODEs.111.second}.
	
	Part (ii) follows immediately From Lemma \ref{la:Solod.eq.1}.
\end{proof}

	In \cite{solodovnikov_1956} formulas \eqref{eqn:ODEs.111.first} and
	\eqref{eqn:ODEs.111.second} are obtained in the Riemannian case.
	We compare the results. In the reference, two branches are distinguished:
	\begin{enumerate}
		\item
		If $v$ is an \emph{essential projective vector field}, then $b\ne0$.
		We then rescale $v$ by $-b$ and find the projective vector field
		$w=-\frac{v}{b}$.
		Now, define $f_i=X_i+\frac{a}{b}$.
		We then obtain
		from~\eqref{eqn:ODEs.111} (no summation convention applies)
		\begin{subequations}\label{eqn:solodovnikov.essential}
			\begin{align}
				\label{eqn:solodovnikov.essential.1}
				w^i\,\frac{df_i}{d x^i}
				&= f_i^2 + \alpha_1f_i + \alpha_0 \\
				\label{eqn:solodovnikov.essential.2}
				\frac{dw^i}{d x^i}
				&= -\alpha_1\,,
			\end{align}
		\end{subequations}
		where we define
		$$
		\alpha_1=-\frac{a+d}{b}\,,\quad
		\alpha_0=\frac{ad-cb}{b^2}\,.
		$$
		This system corresponds with Equations~(5.5) of
		\cite{solodovnikov_1956}.
		\item
		If $v$ is a \emph{homothetic projective vector field}, then $b=0$.
		In order to compare \eqref{eqn:ODEs.111} with the reference, we
		introduce $\eta\in\mathbb{R}$ such that $\lie_vg=2\eta\,g$.
		Using Equation~\eqref{eqn:Lv.sigma}, we then find
		$\lie_v\sigma=-\frac{\eta}{2}\sigma$ and conclude $\eta=-2a$.
		Equations \eqref{eqn:ODEs.111} thus take the form (no summation
		convention)
		\begin{subequations}
		\label{eqn:our.solodovnikov.homothetic}
		\begin{align}
			\label{eqn:our.solodovnikov.homothetic.1}
			v^i\,\frac{d X_i}{d x^i}
			&= (d-a)\,X_i + c \\
			\label{eqn:our.solodovnikov.homothetic.2}
			\frac{d v^i}{d x^i}
			&= -(a+d)\,.
		\end{align}
		\end{subequations}
		If $v$ is Killing, we have $a=0$. If $v$ is properly homothetic, the
		have $a\ne0$ and by rescaling $v$ we can fix $a$ to be any non-zero
		constant. Therefore, the number of free parameters is two.
		Note that \eqref{eqn:our.solodovnikov.homothetic} correspond with
		Equations~(5.8) of~\cite{solodovnikov_1956}.
	\end{enumerate}

\begin{remark}\label{rmk:compare.solodovnikov}
We emphasise the following difference between the two cases: In the
homothetic case, we may immediately take a solution
to~\eqref{eqn:our.solodovnikov.homothetic} and use it to write down the
metrics~\eqref{eqn:111.1} and~\eqref{eqn:111.2}.
In the essential case, for solutions to~\eqref{eqn:solodovnikov.essential},
this is only possible if $\lie_v\sigma$ is invertible. Otherwise, one of the
$f_i$ is going to be zero.
Indeed, if $\det(a\sigma+b\bar\sigma)=0$, then $-\frac{a}{b}$ is an
eigenvalue of $L$, and so $f_i=X_i+\frac{a}{b}=-\frac{a}{b}+\frac{a}{b}=0$
for some $i$ (see Remark \ref{rmk:transformations.111}).
Note that $g=\sum_idx_i^2\prod_{j\ne i}(X_i-X_j)=\sum_idx_i^2\prod_{j\ne
	i}(f_i-f_j)$. However, \eqref{eqn:111.2} cannot be computed for $f_i$.
Instead, we obtain the metrics projectively equivalent to $g$ by computing
\[
  \bar g = \mu\,\left(
	\pm\tfrac{(f_1-f_2)(f_1-f_3)}{(f_1+\ell)^2(f_2+\ell)(f_3+\ell)}\,(dx^1)^2
	\pm\tfrac{(f_2-f_1)(f_2-f_3)}{(f_1+\ell)(f_2+\ell)^2(f_3+\ell)}\,(dx^2)^2
	\pm\tfrac{(f_3-f_1)(f_3-f_2)}{(f_1+\ell)(f_2+\ell)(f_3+\ell)^2}\,(dx^3)^2
	\right)
\]
wherever defined (here $\ell$ and $\mu\ne0$ are real constants).
This formula can also be found in \cite[Eq. (26$'$)]{levi-civita_1896}
\end{remark}

The following two lemmas are the foundation for a full description of
[1-1-1]-type Levi-Civita metrics with projective vector fields.
We begin with the solution of the system~\eqref{eqn:solodovnikov.essential},
which yields essential projective vector fields\footnote{%
	We remark that, in the statement of the lemma, the condition
	$\alpha_1^2-4\alpha_0= 0$ is equivalent to requiring that the eigenvalues
	of	\eqref{eq:A} be coincident. Indeed $b^2(\alpha_1^2-4\alpha_0) =	
	(\kappa_1-\kappa_2)^2$, where $\kappa_i$ are the eigenvalues of
	\eqref{eq:A}.}

\begin{lemma}\label{la:explicit.solutions.111.essential}
The general solution to system \eqref{eqn:solodovnikov.essential}, up to a translation of $x^i$, is described below.
\begin{enumerate}
\item $\alpha_1\neq 0$
	\begin{enumerate}
		\item $\alpha_1^2-4\alpha_0>0$
		\begin{equation}\label{eqn:fi:1}
		w^i=-\alpha_1x^i\,,\quad f_i=\frac12\tanh\left(\frac{\ln(c_i|x^i|)}	 {2\alpha_1}\sqrt{\alpha_1^2-4\alpha_0}\right)\sqrt{\alpha_1^2-4\alpha_0}-\frac12
	 \alpha_1\,, \quad c_i\in\mathbb{R}, c_i>0\,.
		\end{equation}
		\item $\alpha_1^2-4\alpha_0= 0$
		\begin{equation}\label{eqn:fi:2}
		w^i=-\alpha_1x^i\,,\quad f_i=\alpha_1\left(
		\frac{1}{\ln(c_i|x^i|)}-\frac{1}{2}\right)\,,\quad c_i\in\mathbb{R},
		c_i>0\,.
		\end{equation}
		\item $\alpha_1^2-4\alpha_0<0$
		\begin{equation}\label{eqn:fi:3}
		w^i=-\alpha_1x^i\,,\quad f_i=-\frac12\tan\left(\frac{\ln(c_i|x^i|)}	 {2\alpha_1}\sqrt{-\alpha_1^2+4\alpha_0}\right)\sqrt{-\alpha_1^2+4\alpha_0}-\frac12
	 \alpha_1\,, \quad c_i\in\mathbb{R}, c_i>0\,.
		\end{equation}
		\end{enumerate}
	\item $\alpha_1=0$
		\begin{enumerate}
		\item
		$\alpha_0>0$. In this case
		$$
		w^i=c_i\,,\quad f_i=\sqrt{\alpha_0}\tan\left( \frac{\sqrt{\alpha_0}\,x^i}{c_i} \right)\,,\quad  c_i\in\mathbb{R}\,,\,\,c_i\neq 0\,.
		$$
		\item\label{item:La10.2b} $\alpha_0=0$. In this case
		$$
		w^i=c_i\,,\quad f_i=-\frac{c_i}{x^i}\,,\quad c_i\in\mathbb{R}\,.
		$$
		\item\label{item:La10.2c}
		$\alpha_0<0$. In this case, either
		$$
		\begin{array}{ll}
		w^i=c_i\,,\quad f_i=-\sqrt{-\alpha_0}\tanh\left( \frac{\sqrt{-\alpha_0}\,x^i}{c_i}\right), & c_i\in\mathbb{R}\,,\,\,c_i\neq 0
		\\
\text{or}
\\
		w^i=0\,,\quad f_i=\mp\sqrt{-\alpha_0} &
		\end{array}
		$$
\end{enumerate}
\end{enumerate}
\end{lemma}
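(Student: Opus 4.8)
The plan is to exploit the fact that, for each fixed $i$, the pair \eqref{eqn:solodovnikov.essential.1}--\eqref{eqn:solodovnikov.essential.2} is an autonomous system of ordinary differential equations in the single variable $x^i$, completely decoupled from the other indices (the constants $\alpha_0,\alpha_1$ being the only shared data). So I would drop the index, write $w=w^i$, $f=f_i$, $x=x^i$, and solve
\[
	w\,f' = f^2+\alpha_1 f+\alpha_0\,,\qquad w' = -\alpha_1\,,
\]
independently for each $i$, reinstating the index at the end so that the integration constants become the $c_i$. First I would integrate the second equation, obtaining $w=-\alpha_1 x+\kappa$ for a constant $\kappa$. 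If $\alpha_1\ne0$, the translation $x\mapsto x+\kappa/\alpha_1$ (permitted by the phrase ``up to a translation of $x^i$'') normalises this to $w=-\alpha_1 x$, using up the translation freedom; if $\alpha_1=0$, then $w=\kappa=:c_i$ is a constant, and the translation freedom is still available.

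In the case $\alpha_1\ne0$, the first equation becomes the separable ODE $-\alpha_1 x\,f' = f^2+\alpha_1 f+\alpha_0$, valid locally away from $x=0$ and away from the zeros of the right-hand side. Separating variables gives $\int \tfrac{df}{f^2+\alpha_1 f+\alpha_0} = -\tfrac1{\alpha_1}\ln|x| + \text{const}$. Completing the square, $f^2+\alpha_1 f+\alpha_0 = \bigl(f+\tfrac{\alpha_1}{2}\bigr)^2 - \tfrac14(\alpha_1^2-4\alpha_0)$, so the left-hand integral is an inverse hyperbolic tangent, a reciprocal, or an arctangent according to whether the discriminant $\alpha_1^2-4\alpha_0$ is positive, zero, or negative. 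Solving for $f$ and absorbing the integration constant into the argument of the logarithm as $\ln(c_i|x|)$ with $c_i>0$ yields exactly \eqref{eqn:fi:1}, \eqref{eqn:fi:2}, \eqref{eqn:fi:3}, respectively.

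In the case $\alpha_1=0$ we have $w=c_i$ constant. If $c_i\ne0$, the first equation reads $c_i f' = f^2+\alpha_0$, again separable; integrating and absorbing the integration constant by a translation of $x$ gives $f=\sqrt{\alpha_0}\tan(\sqrt{\alpha_0}\,x/c_i)$ when $\alpha_0>0$, $f=-c_i/x$ when $\alpha_0=0$, and $f=-\sqrt{-\alpha_0}\tanh(\sqrt{-\alpha_0}\,x/c_i)$ when $\alpha_0<0$, i.e.\ the cases 2(a), \ref{item:La10.2b}, \ref{item:La10.2c}. If $c_i=0$, the first equation degenerates to the algebraic relation $f^2+\alpha_0=0$: no solution for $\alpha_0>0$, the solution $f\equiv0$ for $\alpha_0=0$ (the $c_i=0$ instance of \ref{item:La10.2b}), and $f\equiv\pm\sqrt{-\alpha_0}$ for $\alpha_0<0$ (the second alternative listed in \ref{item:La10.2c}).

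I expect the only real work to be the bookkeeping and, especially, the verification that the list above is \emph{exhaustive}. One must check that the equilibrium (constant-$f$) solutions $f\equiv$ root of $f^2+\alpha_1 f+\alpha_0$ are recovered as limiting/boundary cases of the separated solutions: they arise from \eqref{eqn:fi:1} as $c_i\to 0$ or $c_i\to\infty$ (since $\tanh(\pm\infty)=\pm1$) and from \eqref{eqn:fi:2} as $c_i\to\infty$, whereas for $\alpha_1=0$ they are already listed explicitly. One also has to be a little careful with the sign region in the inverse-hyperbolic-tangent integration (the alternative $\coth$-branch when $|f|$ exceeds the relevant threshold), which, up to the allowed translation, does not produce genuinely new local solution families beyond those listed. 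Apart from this case-by-case confirmation, the proof is a routine integration of decoupled first-order ODEs.
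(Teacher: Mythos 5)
Your proposal is correct and follows essentially the same route as the paper, whose entire proof of this lemma is the single remark that the ODEs are of Riccati type and can be straightforwardly solved: you integrate $\frac{dw^i}{dx^i}=-\alpha_1$, use the translation freedom to normalise $w^i$, and then separate variables in the resulting Riccati equation, splitting on the sign of the discriminant $\alpha_1^2-4\alpha_0$ (and on $c_i=0$ when $\alpha_1=0$). Your closing discussion of the equilibrium solutions and the $\coth$-type branch is more care than the paper itself records; note only that these are limiting or complementary branches not literally contained in the $\tanh$ formula \eqref{eqn:fi:1}, a subtlety the paper's one-line proof leaves implicit rather than resolves.
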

\begin{proof}
All the involved ODEs are of Riccati type, which can be straightforwardly
solved.
%
\end{proof}

The following lemma covers homothetic vector fields of [1-1-1]-type
Levi-Civita metrics.

\begin{lemma}\label{la:explicit.solutions.111.homothetic}
The general solution to system \eqref{eqn:our.solodovnikov.homothetic}, up to a translation of $x^i$, is described below.
\begin{enumerate}
\item $a+d\neq 0$.
	\begin{enumerate}
	\item\label{item:La11.1a} $a-d\neq 0$.

	In this case \eqref{eqn:our.solodovnikov.homothetic.2} gives
	$$
	v^i=-(a+d)x^i\,.
	$$
	By substituting it into \eqref{eqn:our.solodovnikov.homothetic.1} we obtain an ODE whose general solution is
	$$
	 X_i=\frac{c}{a-d} + k_i |x^i|^{\frac{a-d}{a+d}}\,,\,\,k_i\in\R\,.
	$$
	\item\label{item:La11.1b} $a-d=0$, $a\neq0$.

	In this case \eqref{eqn:our.solodovnikov.homothetic.2} gives
	$$
		v^i=-2ax^i\,.
	$$
	By substituting it into \eqref{eqn:our.solodovnikov.homothetic.1} we
	obtain an ODE whose general solution is
	$$
		X_i=-\frac{c\ln(|x^i|)}{2a}+k_i\,,\quad k_i\in\R
	$$
	\end{enumerate}
\item $a+d=0$.
	\begin{enumerate}
	\item $a\neq0$.

In this case, either
$$
v^i=0\,,\quad X_i=\frac{c}{2a}
$$
or
$$
v^i=k_i\,,\quad X_i=\frac{c}{2a} + h_ie^{-\frac{2a}{k_i}x^i}\,,\quad k_i\in\R\,,\,\,k_i\neq0\,,\quad h_i\in\{-1,0,1\}\,.
$$
\item\label{item:La11.2b} $a=0=d$, $c=0$.
Then either
$$
v^i=0\,,\,\,\,X_i=X_i(x^i) \text{  is an arbitrary non-constant function of
$x^i$}
$$
or
$$
v^i=k_i\,,\,\,X_i=h_i\,,\,\,k_i\in\R\,,\,k_i\neq0\,,\,\,h_i\in\R
$$

\item\label{item:La11.2c} $a=0=d$, $c\neq 0$

Then
$$
v^i=k_i\,,\quad X_i=\frac{c}{k_i}x^i\,,\quad k_i\in\R\,,\,\,k_i\neq0
$$
\end{enumerate}
\end{enumerate}
\end{lemma}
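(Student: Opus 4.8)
The plan is to solve the system~\eqref{eqn:our.solodovnikov.homothetic} explicitly, case by case, according to whether the quantities $a+d$, $a-d$, $a$, $c$ vanish. The system decouples: Equation~\eqref{eqn:our.solodovnikov.homothetic.2} is a first-order ODE for $v^i$ alone, namely $\frac{dv^i}{dx^i}=-(a+d)$, and once $v^i$ is known, \eqref{eqn:our.solodovnikov.homothetic.1} becomes a linear first-order ODE for $X_i$. So in each case I would first integrate the equation for $v^i$, then substitute the result into the equation for $X_i$.

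First, suppose $a+d\ne0$. Then $v^i=-(a+d)x^i+\text{const}$, and after a translation of $x^i$ (which is permitted by the statement) we may take $v^i=-(a+d)x^i$. Substituting into \eqref{eqn:our.solodovnikov.homothetic.1} gives $-(a+d)x^i\frac{dX_i}{dx^i}=(d-a)X_i+c$, a separable linear ODE. If $a-d\ne0$ the homogeneous part integrates to $|x^i|^{(a-d)/(a+d)}$, and a particular solution is the constant $\frac{c}{a-d}$; this yields $X_i=\frac{c}{a-d}+k_i|x^i|^{(a-d)/(a+d)}$. If instead $a-d=0$ (so $a=d$, and $a\ne0$ since $a+d\ne0$), the equation reduces to $-2ax^i\frac{dX_i}{dx^i}=c$, which integrates to $X_i=-\frac{c\ln|x^i|}{2a}+k_i$. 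This handles case~(1).

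Next, suppose $a+d=0$. Then \eqref{eqn:our.solodovnikov.homothetic.2} gives $v^i=k_i$ constant, and \eqref{eqn:our.solodovnikov.homothetic.1} becomes $k_i\frac{dX_i}{dx^i}=(d-a)X_i+c=-2aX_i+c$ (using $d=-a$). If $a\ne0$: when $k_i=0$ the ODE forces $X_i=\frac{c}{2a}$ (the constant root); when $k_i\ne0$ it is a linear ODE with exponential solution $X_i=\frac{c}{2a}+h_ie^{-\frac{2a}{k_i}x^i}$, and after rescaling one may normalise the amplitude $h_i\in\{-1,0,1\}$. If $a=0=d$ and $c=0$, the ODE is $k_i\frac{dX_i}{dx^i}=0$: either $k_i=0$ and $X_i$ is an arbitrary (non-constant, to stay within the [1-1-1] setting) function, or $k_i\ne0$ and $X_i=h_i$ is constant. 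Finally if $a=0=d$ but $c\ne0$, then $k_i\ne0$ is forced (otherwise $0=c\ne0$) and $k_i\frac{dX_i}{dx^i}=c$ integrates to $X_i=\frac{c}{k_i}x^i$, up to a translation. This exhausts case~(2) and completes the list.

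The computations here are all elementary integrations of linear first-order ODEs, so I do not anticipate a genuine mathematical obstacle; the main point requiring care is \emph{bookkeeping} — making sure the case split on $a+d$, $a-d$, $a$, and $c$ is exhaustive and non-overlapping, and that the normalisations (translation of $x^i$, rescaling of $v$ as permitted by Remark~\ref{rmk:transformations.111} and the freedom already used in Lemma~\ref{la:normalising.Lw}) are applied consistently so that the stated normal forms (e.g.\ $h_i\in\{-1,0,1\}$, the absence of an additive constant in $X_i=\frac{c}{k_i}x^i$) are legitimately achieved. One should also double-check the degenerate sub-cases where $v^i\equiv0$ on some block, which correspond to the eigenvalue $X_i$ being a constant root of Solodovnikov's polynomial, consistently with Lemma~\ref{la:eigenvalues.L.roots.solodovnikov}.
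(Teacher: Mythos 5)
Your proposal is correct and follows essentially the same route as the paper, which simply observes that the claim is obtained "in analogy to Lemma~\ref{la:explicit.solutions.111.essential}", i.e.\ by directly integrating the decoupled elementary ODEs \eqref{eqn:our.solodovnikov.homothetic} case by case, exactly as you do. The only small slip is your phrase ``after rescaling'' for the normalisation $h_i\in\{-1,0,1\}$: it is the permitted translation of $x^i$ (which multiplies the exponential's amplitude by the positive factor $e^{-2at/k_i}$) that reduces the amplitude to its sign, not a rescaling of the coordinate.
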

\begin{proof}
The claim is straightforwardly obtained in analogy to
Lemma~\ref{la:explicit.solutions.111.essential}.
\end{proof}

Recall that if an eigenvalue of $L=L(g,\bar g)$ of the
form~\eqref{eqn:L.diagonal.1.1.1} is constant,
i.e.~$X_i(x^i)=\text{constant}$ for some $i$, then $\partial_{x^i}$ is a
Killing vector field as the metric~\eqref{eqn:111} does not depend on $x^i$
in that case.
Moreover, note that by definition, for a metric of [1-1-1] type, no
eigenvalue of $L$ can have multiplicity greater than one.
Finally, if all three eigenvalues of $L$ are constant, the
metric~\eqref{eqn:111} has constant curvature, and we are therefore going to
assume in the following that at least one eigenvalue of $L$ is non-constant.
\medskip

\noindent Note that if $L=L(\sigma,\bar\sigma)$ has a constant eigenvalue, we 
may w.l.o.g.\ change $\bar\sigma$ such that~$X_1=0$. Also, for a
metric~\eqref{eqn:111}, $L$ cannot have repeated eigenvalues. If $L$ has two
constant eigenvalues, we may in this case w.l.o.g.\ assume
$X_1=0,X_2=\rho\ne0$ by reordering the coordinates.
\medskip

For the following proposition, also recall the coordinate transformations
outlined in Remark~\ref{rmk:transformations.111}.

\begin{proposition}\label{prop:111.2constantEVs}
	Consider a metric~\eqref{eqn:111} of non-constant curvature such that
	$L(g,\bar g)$ has two constant eigenvalues.
	Then the projective algebra is
	$\projalg(g)=\la\partial_{x^1},\partial_{x^2}\ra$ and coincides with the
	Killing algebra.
\end{proposition}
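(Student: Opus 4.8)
The plan is to prove the two inclusions $\la\partial_{x^1},\partial_{x^2}\ra\subseteq\mathfrak{k}(g)\subseteq\projalg(g)$ and $\projalg(g)\subseteq\la\partial_{x^1},\partial_{x^2}\ra$, where $\mathfrak{k}(g)$ denotes the Lie algebra of Killing fields of $g$; the last inclusion then forces equality throughout, which yields both assertions. By the remarks preceding the proposition we may assume $X_1\equiv0$ and $X_2\equiv\rho\ne0$, while $X_3=X_3(x^3)$ is non-constant because $g$ is not of constant curvature. The first chain is immediate: the metric~\eqref{eqn:111.1} depends on neither $x^1$ nor $x^2$, so $\partial_{x^1}$ and $\partial_{x^2}$ are Killing; they are linearly independent and commute, and a Killing field is projective.

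For the reverse inclusion let $v\in\projalg(g)$. By Lemma~\ref{la:Solod.eq.1} we have $v=v^i\partial_{x^i}$ with $v^i=v^i(x^i)$, and the system~\eqref{eqn:ODEs.111} holds for the matrix $A$ of~\eqref{eqn:abcd} attached to $v$ and a basis $(\sigma,\bar\sigma)$ of $\solSp$ with $\sigma$ corresponding to $g$. Substituting the constant eigenvalues $X_1\equiv0$ and $X_2\equiv\rho$ into~\eqref{eqn:ODEs.111.first} forces $c=0$ and $d-a=b\rho$; hence for $i=3$ that equation becomes $v^3X_3'=-b\,X_3(X_3-\rho)$, while~\eqref{eqn:ODEs.111.second} gives $v^i=-(a+d)x^i+k_i$ with $k_i\in\R$. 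It now suffices to show $a+d=0$ and $v^3\equiv0$, for then $v=k_1\partial_{x^1}+k_2\partial_{x^2}$.

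Suppose first $b=0$, i.e.\ $v$ is not essential. Then $d=a$, and the $i=3$ equation reduces to $v^3X_3'=0$; since $X_3$ is non-constant this gives $v^3\equiv0$, hence $a+d=0$ and $k_3=0$, and together with $d=a$ this forces $a=d=0$. Thus $v=k_1\partial_{x^1}+k_2\partial_{x^2}$ is Killing, and in particular $g$ admits no properly homothetic projective field. There remains the case $b\ne0$, i.e.\ $v$ essential. Passing to $f_i=X_i+a/b$ and $w=-v/b$, the system~\eqref{eqn:solodovnikov.essential} holds; as $f_1=a/b$ and $f_2=\rho+a/b$ are two \emph{distinct} constants, they are the two distinct real roots of $t^2+\alpha_1t+\alpha_0$, so $\alpha_1^2-4\alpha_0=\rho^2>0$, while $f_3=X_3+a/b$ is non-constant. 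Comparing with the solution list of Lemma~\ref{la:explicit.solutions.111.essential} in the regime $\alpha_1^2-4\alpha_0>0$, the only branch compatible with two distinct constant values $f_1,f_2$ forces $\alpha_1=0$, $\alpha_0<0$, $w^1=w^2=0$ (hence $v^1=v^2=0$), $w^3$ a non-zero constant, and $X_3$ (up to translation) a hyperbolic-tangent profile. Writing the metric~\eqref{eqn:111.1} with $X_1\equiv0$, $X_2\equiv\rho$ and this $X_3$, passing to $X_3$ as a coordinate and performing an elementary further substitution, $g$ is seen to be, up to a constant factor and choice of signature, a metric of non-zero constant curvature; this contradicts the hypothesis. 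Hence $v$ is not essential.

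Combining the two cases, every $v\in\projalg(g)$ is a Killing field contained in $\la\partial_{x^1},\partial_{x^2}\ra$, and with the first chain of inclusions this proves $\projalg(g)=\mathfrak{k}(g)=\la\partial_{x^1},\partial_{x^2}\ra$. I expect the essential case to be the main obstacle: one has to determine precisely which branch of Lemma~\ref{la:explicit.solutions.111.essential} is consistent with two constant eigenvalues, and then carry through the (sign- and signature-sensitive) computation recognising the resulting metric as a space form.
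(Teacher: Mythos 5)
Your proof is correct and follows essentially the same route as the paper's: after normalising $X_1\equiv0$, $X_2\equiv\rho\ne0$, the homothetic case ($b=0$) yields only the Killing fields $\partial_{x^1},\partial_{x^2}$, while the essential case ($b\ne0$) is pinned down to the constant-root/tanh branch (case~\ref{item:La10.2c} of Lemma~\ref{la:explicit.solutions.111.essential}) and excluded because the resulting metric has constant curvature. The only cosmetic difference is that you handle the homothetic branch directly from the ODEs~\eqref{eqn:ODEs.111} rather than via case~\ref{item:La11.2b} of Lemma~\ref{la:explicit.solutions.111.homothetic}, which is an equivalent computation.
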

\begin{proof}
	In Lemma~\ref{la:explicit.solutions.111.homothetic} only the
	case~\ref{item:La11.2b} contains solutions compatible with $L$ having two
	constant eigenvalues. We observe that this case we have the two
	trivial Killing vector fields $\partial_{x^1}$ and $\partial_{x^2}$.
	The metric in case~\ref{item:La10.2c} of
	Lemma~\ref{la:explicit.solutions.111.essential} is a special case of the
	aforementioned, and by virtue of the transformation in
	Remark~\ref{rmk:transformations.111} we obtain, in new coordinates $(x,y,z)$, the metric ($k\ne0$)
	\begin{equation*}
		g = \pm 2\left(1-\tanh\left(\frac{z}{k}\right)\right)\,dx^2
		\pm 2\left(1+\tanh\left(\frac{z}{k}\right)\right)\,dy^2
		\pm\left(1-\tanh\left(\frac{z}{k}\right)\right)\,
		\left(1+\tanh\left(\frac{z}{k}\right)\right)\,dz^2\,.
	\end{equation*}
	It is easily checked that this metric has constant curvature.
	The claim then follows straightforwardly.
\end{proof}
The proof of the following proposition is analogous to the previous one.
\begin{proposition}\label{prop:111.1constantEVs}
	Consider a metric~\eqref{eqn:111} of non-constant curvature such that
	$L(g,\bar g)$ has exactly one constant eigenvalue.
	Then $\projalg(g)=\la\partial_{x^1}\ra$, or there is a coordinate
	transformation to new coordinates $(x,y,z)$ that identifies $g$ as one of
	the following:
	\begin{enumerate}
		\item\label{item:1constEV.2Dalgebra.h}
			The metric
			\[
				g=\pm k_2k_3\,y^hz^h\,dx^2
					\pm k_2y^h(k_2y^h-k_3z^h)\,dy^2
					\pm k_3z^h(k_3z^h-k_2y^h)\,dz^2
			\]
			where $h\not\in\{-1,0,1\}, k_i\in\R, k_i\ne0$. Its projective
			algebra is
			generated by
			\[
				\partial_x\quad\text{and}\quad
				x\partial_x+y\partial_y+z\partial_z\,.
			\]
		\item\label{item:1constEV.3Dalgebra.-1}
			The metric
			\[
				g=\pm k_2k_3\,y^{-1}z^{-1}\,dx^2
					\pm k_2y^{-1}(k_2y^{-1}-k_3z^{-1})\,dy^2
					\pm k_3z^{-1}(k_3z^{-1}-k_2y^{-1})\,dz^2
			\]
			($k_i\in\R$, $k_i\ne0$) whose projective algebra is generated by
			\[
				\partial_x\,,\quad
				k_2\partial_y+k_3\partial_z
				\quad\text{and}\quad
				x\partial_x+y\partial_y+z\partial_z\,.
			\]
		\item\label{item:1constEV.2Dalgebra.exp}
			The metric
			\begin{equation*}
				g =\pm h_2h_3 e^{\frac{y}{k_2}}
							e^{\frac{z}{k_3}}\,dx^2
					\pm h_2e^{\frac{y}{k_2}}\left(
							h_2e^{\frac{y}{k_2}}
							-h_3e^{\frac{z}{k_3}}
						\right)\,dy^2
					\pm h_3e^{\frac{z}{k_3}}\left(
							h_3e^{\frac{z}{k_3}}
							-h_2e^{\frac{y}{k_2}}
						\right)\,dz^2
			\end{equation*}
			where $h_i\in\{\pm1\}$, $k_i\in\R$, $k_i\ne0$ and $k_2\ne\pm k_3$.
			Its projective algebra is generated by
			\[
				\partial_x\quad\text{and}\quad
				k_2\partial_y+k_3\partial_z\,.
			\]
		\item\label{item:1constEV.2Dalgebra.tanh}
			The metric
			\begin{align*}
				g &= \pm \left(\varepsilon -\tanh\left(\tfrac{y}{k_2}\right) \right)
						\left(
							\varepsilon-\tanh\left(\tfrac{z}{k_3}\right)
						\right)\,dx^2
					\\ &\quad
					\pm \left( \tanh\left(\tfrac{y}{k_2}\right)-\varepsilon\right)
					\left(
						\tanh\left(\tfrac{y}{k_2}\right)
						-\tanh\left(\tfrac{z}{k_3}\right)
					\right)\,dy^2
					\\ &\quad
					\pm\left( \tanh\left(\tfrac{z}{k_3}\right)-\varepsilon \right)
					\left(
						\tanh\left(\tfrac{z}{k_3}\right)
						-\tanh\left(\tfrac{y}{k_2}\right)
					\right)\,dz^2
			\end{align*}
			with $k_i\in\R$, $k_i\ne0$, $k_3\ne\pm k_2$, $\varepsilon\in\{\pm 
			1\}$.
			Its projective algebra is generated by
			\[
				\partial_x\quad\text{and}\quad
				k_2\partial_y+k_3\partial_z\,.
			\]
	\end{enumerate}
\end{proposition}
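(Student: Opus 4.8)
The strategy is to follow the scheme of the proof of Proposition~\ref{prop:111.2constantEVs}, now sifting through the \emph{full} classification of Lemmas~\ref{la:explicit.solutions.111.essential} and~\ref{la:explicit.solutions.111.homothetic} under the extra constraint that exactly one eigenvalue function of $L=L(g,\bar g)$, say $X_1$, be constant. For such a normalisation we may appeal to Remark~\ref{rmk:transformations.111}; since then the metric~\eqref{eqn:111.1} does not depend on $x^1$, the field $\partial_{x^1}$ is Killing and $\la\partial_{x^1}\ra\subseteq\projalg(g)$. If equality holds we are in the first alternative, so we may assume there is a projective vector field $v$ not proportional to $\partial_{x^1}$. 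By Lemma~\ref{la:Solod.eq.1} its components are univariate and satisfy~\eqref{eqn:ODEs.111} for suitable $a,b,c,d\in\R$, with $v$ homothetic when $b=0$ and essential when $b\ne0$.

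In the homothetic case $b=0$ I would inspect Lemma~\ref{la:explicit.solutions.111.homothetic}, noting that~\eqref{eqn:our.solodovnikov.homothetic} is decoupled over the three indices, so one index may sit on a constant branch while the others do not. Cases~\ref{item:La11.1b} and~\ref{item:La11.2c} make the remaining $X_i$ non-constant only for $c\ne0$, but then \emph{no} $X_i$ is constant; case~\ref{item:La11.2b} offers only all-constant solutions or $v\equiv0$ (reproducing the first alternative). The surviving families are therefore case~\ref{item:La11.1a}, giving (after normalising away the additive constant via Remark~\ref{rmk:transformations.111}) $X_1=0$, $X_i=k_i|x^i|^{h}$ for $i=2,3$ with $h=\tfrac{a-d}{a+d}$ and $v\propto x\partial_x+y\partial_y+z\partial_z$, and the sub-case $a+d=0$, $a\ne0$ with $X_1=0$, $X_i=h_ie^{x^i/k_i}$, $h_i\in\{\pm1\}$, and $v\propto k_2\partial_y+k_3\partial_z$. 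In the essential case $b\ne0$ I would normalise $v\mapsto-v/b$ and set $f_i=X_i+\tfrac ab$, bringing the system into the form~\eqref{eqn:solodovnikov.essential}; a constant $X_i$ is then an equilibrium of the Riccati equation~\eqref{eqn:solodovnikov.essential.1}, which exists over $\R$ precisely when $\alpha_1^2-4\alpha_0\ge0$. A coordinate change on the two non-constant factors turns the logarithmic arguments of Lemma~\ref{la:explicit.solutions.111.essential}(1) into linear ones and reduces those three sub-cases to items~\ref{item:La10.2c}, \ref{item:La10.2b} and to the $\tan$-branch $\alpha_1=0,\alpha_0>0$; the latter admits no real equilibrium and is discarded. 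Hence the essential case only produces $f_i\propto 1/x^i$ (that is $X_i\propto1/x^i$, the value $h=-1$ above) or $f_i\propto\tanh(\text{linear})$ (that is $X_i=\tanh(x^i/k_i)$ up to Remark~\ref{rmk:transformations.111}), with essential field $v\propto k_2\partial_y+k_3\partial_z$.

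For each surviving eigenvalue family I would then write out~\eqref{eqn:111.1} and compute its \emph{entire} projective algebra, i.e.\ collect all $(a,b,c,d)$ for which~\eqref{eqn:ODEs.111} admits a solution with the prescribed $X_i$ (equivalently, solve the defining PDEs for projective fields directly). This yields the four listed metrics: $X_i=k_i|x^i|^h$ with the single extra generator $x\partial_x+y\partial_y+z\partial_z$ (case~\ref{item:1constEV.2Dalgebra.h}); $X_i\propto1/x^i$, where in addition the essential translation $k_2\partial_y+k_3\partial_z$ appears and the algebra becomes three-dimensional (case~\ref{item:1constEV.3Dalgebra.-1}); $X_i=h_ie^{x^i/k_i}$ with the single extra generator $k_2\partial_y+k_3\partial_z$ (case~\ref{item:1constEV.2Dalgebra.exp}); and $X_i=\tanh(x^i/k_i)$ with the single extra generator $k_2\partial_y+k_3\partial_z$ (case~\ref{item:1constEV.2Dalgebra.tanh}). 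The excluded parameter values are accounted for as follows: $h=0$ makes $X_2,X_3$ constant (two constant eigenvalues, not one), $h=-1$ is the separately treated case~\ref{item:1constEV.3Dalgebra.-1}, while $h=1$ (affine eigenvalues) and the degenerate values $k_2=\pm k_3$, $k_i=0$ lead either to metrics already in the list (after a coordinate change) or to metrics of constant curvature, ruled out by hypothesis exactly as in the sample curvature computation that closes the proof of Proposition~\ref{prop:111.2constantEVs}; finally the affine coordinate changes of Remark~\ref{rmk:transformations.111} bring each family into the displayed normal form, including the shift that produces $\varepsilon\in\{\pm1\}$ in case~\ref{item:1constEV.2Dalgebra.tanh}. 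The main obstacle is this last step: showing that each family admits \emph{no further} projective vector field, so that the projective algebras are exactly as stated, and pinning down which degenerate values of $h$ and $k_i$ give constant curvature — both reducing to explicit but routine computations in the spirit of Proposition~\ref{prop:111.2constantEVs}.
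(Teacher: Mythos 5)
Your overall strategy coincides with the paper's: impose the constraint ``exactly one constant eigenvalue'' on the solution families of Lemmas~\ref{la:explicit.solutions.111.essential} and~\ref{la:explicit.solutions.111.homothetic}, read off the surviving families (power, $1/x^i$, exponential, $\tanh$), and then determine the full projective algebras and discard degenerate parameter values by direct computation -- this is exactly the paper's (very terse) argument, and you even correctly retain the branch $a+d=0$, $a\neq0$ of Lemma~\ref{la:explicit.solutions.111.homothetic}, which is the source of the exponential case~\ref{item:1constEV.2Dalgebra.exp} and which the corresponding sentence in the paper's proof does not list. Your treatment of the homothetic case and of the final ``no further projective fields / constant--curvature'' checks is at the same level of detail as the paper's (``the remaining cases follow in an analogous manner''), with one harmless imprecision: in case~\ref{item:La11.2b} of Lemma~\ref{la:explicit.solutions.111.homothetic} the relevant branch is not ``all $X_i$ constant or $v\equiv0$'' but the mixed one, $v=k_1\partial_{x^1}$ with $X_2,X_3$ arbitrary non-constant, which is precisely the generic alternative $\projalg(g)=\la\partial_{x^1}\ra$; your conclusion there is nevertheless the right one.

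There is, however, one step that does not work as written: your disposal of the $\alpha_1\neq0$ branch of Lemma~\ref{la:explicit.solutions.111.essential}. You allow one factor to sit at a real equilibrium of the Riccati equation (discriminant $\geq0$) and then claim that ``a coordinate change on the two non-constant factors turns the logarithmic arguments into linear ones'', reducing the three sub-cases of case (1) to the $\alpha_1=0$ sub-cases. The only coordinate changes preserving the normal form~\eqref{eqn:111.1} are the simultaneous affine ones of Remark~\ref{rmk:transformations.111}; the substitution you need (essentially $x^i\mapsto e^{x^i}$ on two of the three coordinates) is not of this type and destroys the Levi-Civita form: each diagonal term acquires an extra conformal-type factor, which is precisely why the metrics in Theorem~\ref{thm:main}(\ref{item:main.111.1})--(\ref{item:main.111.3}) carry the factors $e^{2x/\beta}$ and are no longer of the form~\eqref{eqn:111.1}. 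Consequently, eigenvalue data of the type $X_1=\pm1$, $X_{2,3}=\tanh\big(\beta\ln(c_i|x^i|)\big)$ (one component at the equilibrium, the other two given by the case-(1a) formula) cannot be identified with the $\tanh(x^i/k_i)$-family by this move, and your argument neither classifies nor eliminates these configurations. The paper's proof side-steps them differently: it simply reads off Lemma~\ref{la:explicit.solutions.111.essential}, whose displayed case-(1) solution families contain no constant branch, so that only cases~\ref{item:La10.2b} and~\ref{item:La10.2c} are declared compatible with exactly one constant eigenvalue. To close your proof you must either argue along those lines (the constant solutions are not part of the general-solution families used in the classification) or treat the case-(1) equilibrium configurations separately (e.g.\ show they reduce to listed normal forms under the admissible transformations, or are excluded by the curvature hypothesis); as it stands, this is a genuine gap.
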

\begin{proof}
	In Lemma~\ref{la:explicit.solutions.111.essential} only the
	cases~\ref{item:La10.2b} and~\ref{item:La10.2c} can lead to metrics $g$
	with $L(g,\bar g)$ having exactly one constant eigenvalue.
	In Lemma~\ref{la:explicit.solutions.111.homothetic} only the
	cases~\ref{item:La11.1a}, \ref{item:La11.2b} and~\ref{item:La11.2c} can
	be of this kind.

	All these cases are special cases of~\ref{item:La11.2b} of
	Lemma~\ref{la:explicit.solutions.111.homothetic}, which is the generic
	case. Generically, according to
	Lemma~\ref{la:explicit.solutions.111.homothetic}, we have the Killing
	vector field $\partial_{x^i}$.
	Let us now consider the remaining, non-generic cases.
	The first case is case~\ref{item:La11.1a} from
	Lemma~\ref{la:explicit.solutions.111.homothetic}, assuming $h\ne1$.
	Note that for $h=1$, this metric coincides with case~\ref{item:La11.2c}
	of the same Lemma. In this case we therefore have a larger projective
	algebra.
	The remaining cases follow in an analogous manner.	
\end{proof}

\begin{proposition}\label{prop:111.0constantEVs}
	Consider a metric~\eqref{eqn:111} of non-constant curvature such that
	$L(g,\bar g)$ has no constant eigenvalue.
	Assume that $g$ admits a non-vanishing projective vector field.
	Then, after a change to new coordinates $(x,y,z)$, the metric is
	one of the following.
	\begin{enumerate}
		\item
		The metric
		\begin{align*}
			g &= \pm(k_1|x|^h-k_2|y|^h)(k_1|x|^h-k_3|z|^h)\,dx^2
				\pm(k_2|y|^h-k_1|x|^h)(k_2|y|^h-k_3|z|^h)\,dy^2
			\\ &\quad
				\pm(k_3|z|^h-k_1|x|^h)(k_3|z|^h-k_2|y|^h)\,dz^2
		\end{align*}
		where $h\not\in\{-1,0,1\}$, $k_i\in\R$, $k_i\ne0$.
		Its projective algebra is generated by
		\[
			x\partial_x+y\partial_y+z\partial_z\,.
		\]
		\item\label{item:1constEV.2Dalgebra.+1}
		The metric
		\begin{align*}
			g &= \pm(k_1x-k_2y)(k_1x-k_3z)\,dx^2
				\pm(k_2y-k_1x)(k_2y-k_3z)\,dy^2
				\\ &\quad
				\pm(k_3z-k_1x)(k_3z-k_2y)\,dz^2
		\end{align*}
		($k_i\in\R$, $k_i\ne0$) whose projective algebra is generated by
		\[
			\frac1{k_1}\,\partial_x
				+\frac1{k_2}\,\partial_y
				+\frac1{k_3}\,\partial_z
			\quad\text{and}\quad
			x\partial_x+y\partial_y+z\partial_z\,.
		\]
		\item\label{item:1constEV.2Dalgebra.-1}
		The metric
		\begin{align*}
			g &= \pm\left(\frac{k_1}{x}-\frac{k_2}{y}\right)
					\left(\frac{k_1}{x}-\frac{k_3}{z}\right)\,dx^2
				\pm\left(\frac{k_2}{y}-\frac{k_1}{x}\right)
					\left(\frac{k_2}{y}-\frac{k_3}{z}\right)\,dy^2
			\\ &\quad
				\pm\left(\frac{k_3}{z}-\frac{k_1}{x}\right)
					\left(\frac{k_3}{z}-\frac{k_2}{y}\right)\,dz^2
		\end{align*}
		($k_i\in\R$, $k_i\ne0$) whose projective algebra is generated by
		\[
			k_1\,\partial_x+k_2\,\partial_y+k_3\,\partial_z
			\quad\text{and}\quad
			x\partial_x+y\partial_y+z\partial_z\,.
		\]
		\item
		The metric
		\begin{equation*}
			g = \pm\ln\!\left(\tfrac{k_1|x|}{k_2|y|}\right)
					\ln\!\left(\tfrac{k_1|x|}{k_3|z|}\right)dx^2
					\pm \ln\!\left(\tfrac{k_2|y|}{k_1|x|}\right)
					\ln\!\left(\tfrac{k_2|y|}{k_3|z|}\right)dy^2
					\pm \ln\!\left(\tfrac{k_3|z|}{k_1|x|}\right)
					\ln\!\left(\tfrac{k_3|z|}{k_2|y|}\right)dz^2
		\end{equation*}
		($k_i\in\R$, $k_i>0$) whose projective algebra is generated by
		\[
			x\partial_x+y\partial_y+z\partial_z\,.
		\]
		\item
		The metric
		\begin{align*}
			g &= \pm\left(h_1e^{\tfrac{x}{k_1}}-h_2e^{\frac{y}{k_2}}\right)
					\left(h_1e^{\tfrac{x}{k_1}}-h_3e^{\frac{z}{k_3}}\right)\,dx^2
				\pm\left(h_2e^{\tfrac{y}{k_2}}-h_1e^{\frac{x}{k_1}}\right)
					\left(h_2e^{\tfrac{y}{k_2}}-h_3e^{\frac{z}{k_3}}\right)\,dy^2
				\\ &\quad
				\pm\left(h_3e^{\tfrac{z}{k_3}}-h_1e^{\frac{x}{k_1}}\right)
					\left(h_3e^{\tfrac{z}{k_3}}-h_2e^{\frac{y}{k_2}}\right)\,dz^2
		\end{align*}
		($h_i\in\{-1,1\}$, $k_i\in\R$, $k_i\ne0$) whose projective algebra is
		generated by
		\[
			k_1\partial_x+k_2\partial_y+k_3\partial_z\,.
		\]
		\item
		The metric
		\begin{align*}
			g &=	\pm\left(
						\tanh\left(\ln(k_1|x|^\beta)\right)
						-\tanh\left(\ln(k_2|y|^\beta)\right)
					\right)
					\left(
						\tanh\left(\ln(k_1|x|^\beta)\right)
						-\tanh\left(\ln(k_3|z|^\beta)\right)
					\right)\,dx^2
			\\ &\quad
				\pm\left(
					\tanh\left(\ln(k_2|y|^\beta)\right)
					-\tanh\left(\ln(k_1|x|^\beta)\right)
				\right)
				\left(
					\tanh\left(\ln(k_2|y|^\beta)\right)
					-\tanh\left(\ln(k_3|z|^\beta)\right)
				\right)\,dy^2
			\\ &\quad
				\pm\left(
					\tanh\left(\ln(k_3|z|^\beta)\right)
					-\tanh\left(\ln(k_1|x|^\beta)\right)
				\right)
				\left(
					\tanh\left(\ln(k_3|z|^\beta)\right)
					-\tanh\left(\ln(k_2|y|^\beta)\right)
				\right)\,dz^2
		\end{align*}
		($k_i\in\R$, $k_i>0$, $\beta\neq0$) whose projective algebra is
		generated by
		\[
			x\partial_x+y\partial_y+z\partial_z\,.
		\]
		\item
		The metric
		\begin{align*}
			g &= \pm\bigg(\frac{1}{\ln(k_1|x|)}-\frac{1}{\ln(k_2|y|)}\bigg)
				 \bigg(\frac{1}{\ln(k_1|x|)}-\frac{1}{\ln(k_3|z|)}\bigg)\,dx^2
				\\ &\quad
				\pm\bigg(\frac{1}{\ln(k_2|y|)}-\frac{1}{\ln(k_1|x|)}\bigg)
				 \bigg(\frac{1}{\ln(k_2|y|)}-\frac{1}{\ln(k_3|z|)}\bigg)\,dy^2
				\\ &\quad
				\pm\bigg(\frac{1}{\ln(k_3|z|)}-\frac{1}{\ln(k_1|x|)}\bigg)
				 \bigg(\frac{1}{\ln(k_3|z|)}-\frac{1}{\ln(k_2|y|)}\bigg)\,dz^2
		\end{align*}
		($k_i\in\R$, $k_i>0$) whose projective algebra is generated by
		\[
			x\partial_x+y\partial_y+z\partial_z\,.
		\]
		\item
		The metric
		\begin{align*}
			g &= \pm\left(
					\tan\left(\ln(k_1|x|^\beta)\right)
					-\tan\left(\ln(k_2|y|^\beta)\right)
				\right)
				\left(
					\tan\left(\ln(k_1|x|^\beta)\right)
					-\tan\left(\ln(k_3|z|^\beta)\right)
				\right)\,dx^2
				\\ &\quad
				\pm\left(
					\tan\left(\ln(k_2|y|^\beta)\right)
					-\tan\left(\ln(k_1|x|^\beta)\right)
				\right)
				\left(
					\tan\left(\ln(k_2|y|^\beta)\right)
					-\tan\left(\ln(k_3|z|^\beta)\right)
				\right)\,dy^2
				\\ &\quad
				\pm\left(
					\tan\left(\ln(k_3|z|^\beta)\right)
					-\tan\left(\ln(k_1|x|^\beta)\right)
				\right)
				\left(
					\tan\left(\ln(k_3|z|^\beta)\right)
					-\tan\left(\ln(k_2|y|^\beta)\right)
				\right)\,dz^2
		\end{align*}
		($k_i\in\R$, $k_i>0$, $\beta\neq0$) whose projective algebra is
		generated by
		\[
			x\partial_x+y\partial_y+z\partial_z\,.
		\]
		\item
		The metric
		\begin{align*}
			g &= \pm\left(
					\tan\left(\tfrac{x}{k_1}\right)
					-\tan\left(\tfrac{y}{k_2}\right)
				\right)
				\left(
					\tan\left(\tfrac{x}{k_1}\right)
					-\tan\left(\tfrac{z}{k_3}\right)
				\right)\,dx^2
				\\ &\quad
				\pm\left(
					\tan\left(\tfrac{y}{k_2}\right)
					-\tan\left(\tfrac{x}{k_1}\right)
				\right)
				\left(
					\tan\left(\tfrac{y}{k_2}\right)
					-\tan\left(\tfrac{z}{k_3}\right)
				\right)\,dy^2
				\\ &\quad
				\pm\left(
					\tan\left(\tfrac{z}{k_3}\right)
					-\tan\left(\tfrac{x}{k_1}\right)
				\right)
				\left(
					\tan\left(\tfrac{z}{k_3}\right)
					-\tan\left(\tfrac{y}{k_2}\right)
				\right)\,dz^2
		\end{align*}
		($k_i\in\R$, $k_i\ne0$) whose projective algebra is generated by
		\[
			k_1\partial_x+k_2\partial_y+k_3\partial_z\,.
		\]
		\item
		The metric
		\begin{align*}
			g &= \pm\left(
					\tanh\left(\tfrac{x}{k_1}\right)
					-\tanh\left(\tfrac{y}{k_2}\right)
				\right)
				\left(
					\tanh\left(\tfrac{x}{k_1}\right)
					-\tanh\left(\tfrac{z}{k_3}\right)
				\right)\,dx^2
				\\ &\quad
				\pm\left(
					\tanh\left(\tfrac{y}{k_2}\right)
					-\tanh\left(\tfrac{x}{k_1}\right)
				\right)
				\left(
					\tanh\left(\tfrac{y}{k_2}\right)
					-\tanh\left(\tfrac{z}{k_3}\right)
				\right)\,dy^2
				\\ &\quad
				\pm\left(
					\tanh\left(\tfrac{z}{k_3}\right)
					-\tanh\left(\tfrac{x}{k_1}\right)
				\right)
				\left(
					\tanh\left(\tfrac{z}{k_3}\right)
					-\tanh\left(\tfrac{y}{k_2}\right)
				\right)\,dz^2
		\end{align*}
		($k_i\in\R$, $k_i\ne0$) whose projective algebra is generated by
		\[
			k_1\partial_x+k_2\partial_y+k_3\partial_z\,.
		\]
	\end{enumerate}
\end{proposition}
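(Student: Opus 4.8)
The plan is to read off the solution branches of Lemmas~\ref{la:explicit.solutions.111.essential} and~\ref{la:explicit.solutions.111.homothetic}, to retain only the branches compatible with the hypotheses, to normalise the survivors into the stated coordinate forms, and finally to compute the whole projective algebra of each. First I would invoke Lemma~\ref{la:Solod.eq.1}: since $g$ is a [1-1-1]-type Levi-Civita metric of non-constant curvature, its degree of mobility equals $2$ (Remark~\ref{rmk:dom.is.two.or.cc}), hence every projective vector field $v$ has the block form $v=v^i\partial_{x^i}$ with univariate $v^i=v^i(x^i)$ and, together with the eigenvalues $X_i$ of $L=L(g,\bar g)$, solves the system~\eqref{eqn:ODEs.111}. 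The assumption that $L$ has no constant eigenvalue means precisely that each $X_i$ is non-constant; consequently, for $b\neq0$, no $X_i$ is identically equal to $-a/b$, so $\lie_v\sigma=a\sigma+b\bar\sigma$ is generically of full rank and the degenerate device of Remark~\ref{rmk:compare.solodovnikov} is not needed. According to whether $v$ is essential ($b\neq0$) or homothetic/Killing ($b=0$) I then consult the corresponding lemma, using throughout that $g$ in~\eqref{eqn:111.1} depends only on the differences $X_i-X_j$, that knowing $g$ suffices (degree of mobility $2$ together with Proposition~\ref{prop:g.inverse}), and that common additive constants, overall multiplicative constants, and the parameters $\alpha_0,\alpha_1$ can be absorbed by a simultaneous affine change of coordinates (Remark~\ref{rmk:transformations.111}) and a reparametrisation of the integration constants.

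In the essential case I work with $f_i=X_i+a/b$ and $w^i=-v^i/b$, so that $g=\sum_i\pm(dx^i)^2\prod_{j\neq i}(f_i-f_j)$, and run through the branches of Lemma~\ref{la:explicit.solutions.111.essential}. Up to the absorptions above, branch~(1a) produces the metric with $X_i=\tanh(\ln(k_i|x^i|^\beta))$, $\beta=\tfrac1{2\alpha_1}\sqrt{\alpha_1^2-4\alpha_0}$ (item~(6)); branch~(1b) the metric with $X_i=1/\ln(k_i|x^i|)$ (item~(7)); branch~(1c) that with $X_i=\tan(\ln(k_i|x^i|^\beta))$ (item~(8)); branch~(2a) that with $X_i=\tan(x^i/k_i)$ (item~(9)); branch~(2b) that with $X_i=k_i/x^i$ (item~(3)); and the first alternative of branch~(2c) that with $X_i=\tanh(x^i/k_i)$ (item~(10)). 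The second alternative of branch~(2c) has $w\equiv0$ and constant $f_i$, so it is discarded, as it violates both the non-constancy assumption and $v\neq0$. The corresponding projective vector field reads $w=-\alpha_1\sum_ix^i\partial_{x^i}$ when $\alpha_1\neq0$ and $w=\sum_ic_i\partial_{x^i}$ when $\alpha_1=0$, which matches the generators displayed in items~(6)--(10).

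The homothetic case is treated analogously from Lemma~\ref{la:explicit.solutions.111.homothetic}, now working with $X_i$ directly. After discarding the branches with $v\equiv0$ and those forcing a constant $X_i$, branch~(1a) with $h:=\tfrac{a-d}{a+d}\notin\{-1,0,1\}$ yields the power metric (item~(1)), branch~(1b) the logarithmic metric (item~(4)), and the second alternative of branch~(2a) the exponential metric (item~(5)); the borderline exponents $h=1$ and $h=-1$ of branch~(1a) reduce, after absorbing the additive constant, to the profiles $X_i\propto x^i$ and $X_i\propto 1/x^i$, which also occur via branch~(2c), respectively via the essential branch~(2b). Hence items~(2) and~(3) acquire a second generator, which I would exhibit by hand: for item~(2) the Killing field $\sum_ik_i^{-1}\partial_{x^i}$, for item~(3) the essential field $\sum_ik_i\partial_{x^i}$, in both cases alongside the field $\sum_ix^i\partial_{x^i}$ (which is a proper homothety in the first case and a Killing field in the second).

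Finally, and this is the substantive part, for each of the ten normal forms I would determine the \emph{full} projective algebra. Any projective vector field is again of block form $w=w^i(x^i)\partial_{x^i}$, and with the $X_i$ now explicit it must satisfy $w^i\tfrac{dX_i}{dx^i}=\solpol_{A'}(X_i)$ and $\tfrac{dw^i}{dx^i}=-(a'+d')$ for a single matrix $A'$; the first equation forces $w^i=\solpol_{A'}(X_i)/X_i'$, and demanding that this be regular and have $x^i$-independent derivative equal to the common value $-(a'+d')$ for all three indices simultaneously is a strong rigidity condition. Solving it gives a finite-dimensional space of pairs $(w,A')$ whose projection onto the $w$-component is $\projalg(g)$: for items~(1) and~(4)--(10) this is exactly the single listed generator, while for the two distinguished profiles $X_i\propto x^i$ and $X_i\propto 1/x^i$ it is the two-dimensional algebra of items~(2) and~(3). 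Along the way one records the open conditions on the $k_i,h,\beta$ and the $\pm$-signs ensuring that the eigenvalues remain pairwise distinct, so that the coordinates~\eqref{eqn:local.coordinates.ordered} are legitimate, and one checks that the metric has non-constant curvature, excluding sporadic constant-curvature specialisations exactly as at the end of the proof of Proposition~\ref{prop:111.2constantEVs}. The main obstacle is this last step: proving that no additional projective vector field exists, which reduces to the rigidity analysis above for triples $X_1,X_2,X_3$ of one and the same functional type, together with the bookkeeping of the numerous constants and signs under the transformations of Remark~\ref{rmk:transformations.111}.
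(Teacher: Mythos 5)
Your proposal is correct and follows essentially the same route as the paper's own proof: both read off the branches of Lemmas~\ref{la:explicit.solutions.111.essential} and~\ref{la:explicit.solutions.111.homothetic}, discard the branches with $v\equiv0$ or a constant $X_i$, normalise via the transformations of Remark~\ref{rmk:transformations.111}, and note that the borderline profiles $X_i\propto x^i$ and $X_i\propto 1/x^i$ occur in two branches, which is exactly how items~\ref{item:1constEV.2Dalgebra.+1} and~\ref{item:1constEV.2Dalgebra.-1} acquire their second generator. Your closing rigidity argument for the completeness of each projective algebra (any projective field must solve~\eqref{eqn:ODEs.111} for some matrix $A'$, so $w^i=\solpol_{A'}(X_i)/X_i'$ with constant derivative) is the same mechanism the paper relies on, only stated more explicitly than in the paper's brief ``the remaining cases follow similarly''.
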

\begin{proof}
	In view of Remark~\ref{rmk:transformations.111}, consider the solutions
	in Lemmas~\ref{la:explicit.solutions.111.essential}
	and~\ref{la:explicit.solutions.111.homothetic}.
	Consider the first case of
	Lemma~\ref{la:explicit.solutions.111.homothetic}.
	Without changing the metric, we translate the solutions
	$X_i=\frac{c}{a-d} + k_i |x^i|^{\frac{a-d}{a+d}}$ by the common constant
	$\frac{c}{a-d}$ and rename $h=\frac{a-d}{a+d}$.
	We thus arrive at the claim for $h\ne0$ (for $h=0$ the metric would have
	constant curvature). Note that the projective vector field, up to
	multiplication by a constant, is given by the components $v^i=x^i$.
	Consider the second case in
	Lemma~\ref{la:explicit.solutions.111.homothetic},
	$X_i=-\frac{c}{2a}\,\ln(|x^i|)+k_i$. By an affine transformation of the
	coordinates of the form $x^i\to \frac{2a}{c}x^i$, and using the standard
	identities for logarithms, we arrive at the claim.
	The remaining cases follow similarly, from the remaining cases of
	Lemma~\ref{la:explicit.solutions.111.homothetic} and the solutions from
	Lemma~\ref{la:explicit.solutions.111.essential}. Recall that no two
	eigenvalues of the Benenti tensor $L(g,\bar{g})$ are allowed to coincide.
\end{proof}

The previous three propositions answer the question which Levi-Civita metrics
of type [1-1-1] admit a projective symmetry algebra of at least dimension~1.
The following theorem summarises these results.

\begin{theorem}\label{thm:1.1.1}
	Consider a metric~\eqref{eqn:111} of non-constant curvature with a
	non-vanishing projective vector field.
	Then it is a metric in Proposition \ref{prop:111.2constantEVs}, 
	\ref{prop:111.1constantEVs} or \ref{prop:111.0constantEVs}. In particular:
	\begin{enumerate}[label=\alph*)]
		\item
		If the projective algebra is exactly 2-dimensional, it is as in
		Proposition~\ref{prop:111.2constantEVs}, or as in
		\ref{item:1constEV.2Dalgebra.h}, \ref{item:1constEV.2Dalgebra.exp} or
		\ref{item:1constEV.2Dalgebra.tanh}
		of Proposition~\ref{prop:111.1constantEVs}, or as in
		\ref{item:1constEV.2Dalgebra.+1} or \ref{item:1constEV.2Dalgebra.-1}
		of Proposition~\ref{prop:111.0constantEVs}.
		\item
		If the projective algebra is exactly 3-dimensional, it is as
		in~\ref{item:1constEV.3Dalgebra.-1} of
		Proposition~\ref{prop:111.1constantEVs}.
	\end{enumerate}
	If the projective algebra is at least $4$-dimensional, the metric is
	already of constant curvature.
\end{theorem}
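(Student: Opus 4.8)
The plan is to assemble the theorem directly from the three preceding propositions, which already classify the metrics~\eqref{eqn:111} of non-constant curvature admitting a non-vanishing projective vector field according to the number of constant eigenvalues of $L=L(g,\bar g)$: two constant eigenvalues (Proposition~\ref{prop:111.2constantEVs}), exactly one (Proposition~\ref{prop:111.1constantEVs}), or none (Proposition~\ref{prop:111.0constantEVs}). Since a metric of type [1-1-1] by definition has three simple eigenvalues, and since by the remark preceding Proposition~\ref{prop:111.2constantEVs} a metric with all three eigenvalues constant has constant curvature, these three cases are exhaustive and mutually exclusive. So the first sentence of the theorem is immediate. What remains is to read off the \emph{dimension} of $\projalg(g)$ in each listed case and sort the cases accordingly.

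First I would treat the bookkeeping of the algebra dimension. In each of the three propositions, the generators of $\projalg(g)$ are listed explicitly; a direct inspection gives: Proposition~\ref{prop:111.2constantEVs} yields a $2$-dimensional (Killing) algebra; in Proposition~\ref{prop:111.1constantEVs}, case~\ref{item:1constEV.3Dalgebra.-1} is $3$-dimensional while cases \ref{item:1constEV.2Dalgebra.h}, \ref{item:1constEV.2Dalgebra.exp}, \ref{item:1constEV.2Dalgebra.tanh} are $2$-dimensional (and the residual ``$\projalg(g)=\la\partial_{x^1}\ra$'' alternative is $1$-dimensional); in Proposition~\ref{prop:111.0constantEVs}, cases \ref{item:1constEV.2Dalgebra.+1} and \ref{item:1constEV.2Dalgebra.-1} are $2$-dimensional and all remaining cases are $1$-dimensional. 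Collecting the $2$-dimensional cases gives statement~a) and the unique $3$-dimensional case gives statement~b). One must of course check that the listed generators are genuinely independent and that the lists are complete — but completeness is exactly what the three propositions assert, and independence is visible from the explicit coordinate expressions (e.g.\ $\partial_x$ versus $x\partial_x+y\partial_y+z\partial_z$, etc.).

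The only real content beyond bookkeeping is the final clause: if $\dim\projalg(g)\ge4$ then $g$ has constant curvature. Equivalently, no non-constant-curvature metric~\eqref{eqn:111} has projective algebra of dimension $\ge 3$ other than the single $3$-dimensional family in case~\ref{item:1constEV.3Dalgebra.-1}. This I would argue by the same exhaustive scan: every metric of non-constant curvature and type [1-1-1] with a projective vector field is one of the explicitly listed metrics, and for each of them $\dim\projalg(g)\le 3$, with equality only in case~\ref{item:1constEV.3Dalgebra.-1}; hence $\dim\projalg(g)\ge 4$ forces constant curvature. (For completeness one may note that the constant-curvature model in dimension $3$ has $\dim\projalg = \dim\mathfrak{sl}(4,\R) = 15$, consistent with ``at least $4$-dimensional''.) I expect the main obstacle to be purely verificational: confirming that for \emph{each} family in Propositions~\ref{prop:111.1constantEVs} and~\ref{prop:111.0constantEVs} the listed generators already exhaust $\projalg(g)$, i.e.\ that no exceptional specialisations of the parameters $k_i,\beta,h$ secretly enlarge the algebra without producing constant curvature. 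This is handled by going back to the ODE systems~\eqref{eqn:solodovnikov.essential} and~\eqref{eqn:our.solodovnikov.homothetic} and Lemmas~\ref{la:explicit.solutions.111.essential}--\ref{la:explicit.solutions.111.homothetic}: an extra projective vector field would force a second compatible solution of these systems with the \emph{same} eigenvalue functions $X_i$, and the structure of the Riccati equations (together with the constraint that the three $X_i$ be pairwise non-coincident) pins down exactly when this occurs — precisely the degeneration $h=1$ noted in the proof of Proposition~\ref{prop:111.1constantEVs}, which is case~\ref{item:1constEV.3Dalgebra.-1}. Any further coincidence collapses an eigenvalue and lands outside type [1-1-1], and the remaining borderline parameter choices (e.g.\ $h\in\{-1,0,1\}$, $\beta\to\infty$, $k_2=\pm k_3$) are exactly those already excluded in the hypotheses of the individual cases or else yield constant curvature, as in the sample computation at the end of the proof of Proposition~\ref{prop:111.2constantEVs}.
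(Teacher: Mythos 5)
Your proposal is correct and follows essentially the same route as the paper: the paper states Theorem~\ref{thm:1.1.1} as a direct summary of Propositions~\ref{prop:111.2constantEVs}, \ref{prop:111.1constantEVs} and~\ref{prop:111.0constantEVs} (whose case split by the number of constant eigenvalues of $L(g,\bar g)$ is exhaustive for non-constant curvature), and the dimension counts a), b) and the ``$\geq4$ implies constant curvature'' clause are read off from the explicitly listed generators exactly as you do. Your additional remarks on verifying that the listed generators exhaust $\projalg(g)$ merely make explicit what the propositions already assert via Lemmas~\ref{la:explicit.solutions.111.essential} and~\ref{la:explicit.solutions.111.homothetic}, so no new argument is needed.
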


\section{Levi-Civita metrics of type [2-1]}\label{sec:21}

In order to conclude our study of 3-dimensional Levi-Civita metrics, we now
focus on Levi-Civita metrics of type [2-1] of non-constant curvature that 
admit a projective vector field $v$. We consider solely the positive branch 
of \eqref{eqn:21.1},
\begin{equation}\label{eqn:g1.2-1.for.discussion}
	g = \zeta(z)(h+dz^2)\,,\quad \zeta(z)=Z(z)-\rho\,,
\end{equation}
because $\zeta(z)(h-dz^2)=-\zeta(z)(-h+dz^2)$.
For the projective vector field $v$, in view of Example \ref{ex:v.block}, we have the following expression:
\begin{equation}\label{eq:raw.degLC.v}
	v = u+ \alpha(z)\partial_z\,,
\end{equation}
with
\begin{equation}\label{eqn:proj.symmetry.u}
	u = v^1(x,y)\partial_x + v^2(x,y)\partial_y\,.
\end{equation}

Our aim is analogous to that in Section~\ref{sec:111}, where we have
considered Levi-Civita metrics of type~[1-1-1]: We shall determine the
functions $\zeta(z)$ such that~\eqref{eqn:g1.2-1.for.discussion} admits
non-vanishing projective symmetries.

\begin{remark}\label{rem:allowed.transf}
	The coordinate transformations that preserve the
	form of~\eqref{eqn:g1.2-1.for.discussion} are
	\begin{equation}\label{eqn:allowed.changes.coord}
		(x_\text{new},y_\text{new},z_\text{new})
		=\big(x_\text{new}(x_\text{old},y_\text{old}),
			y_\text{new}(x_\text{old},y_\text{old}),
			z_\text{new}(z_\text{old})=k_1z_\text{old}+k_2\big)\,,\,\,k_i\in\R\,.
	\end{equation}
In addition we can simultaneously transform
\[
	\rho_\text{new}=\rho_\text{old}+t\,,\quad
	Z_\text{new}=Z_\text{old}+t
\]
for constant $t\in\R$. This transformation does not
change~\eqref{eqn:g1.2-1.for.discussion}.
\end{remark}

We now describe first all those metrics~\eqref{eqn:g1.2-1.for.discussion}
that are of constant curvature and thus admit a 15-dimensional algebra of
projective symmetries.
For metrics~\eqref{eqn:g1.2-1.for.discussion} of non-constant curvature, we
are then going to show, in Section~\ref{sec:splitting.gluing}, that
projective vector fields arise from homothetic vector fields of $h$.
We begin by showing that if~\eqref{eqn:g1.2-1.for.discussion} is of constant
curvature, then also $h$ is.
\begin{lemma}\label{la:Rg.constant.Rh.constant}
	Let $g$ be a metric~\eqref{eqn:g1.2-1.for.discussion} of constant scalar
	curvature.
	Then $h$ has constant curvature.
\end{lemma}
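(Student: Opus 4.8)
The plan is to compute the scalar curvature of $g=\zeta(z)(h+dz^2)$ as a function on the three-manifold $M=M_2\times M_1$ and to extract, from the requirement that it be constant, an equation that forces the Gaussian curvature of $h$ to be constant. Since $g$ is a warped-product–type metric with the one-dimensional factor carrying the trivial metric $dz^2$ and conformal factor $\zeta(z)$ depending only on $z$, its curvature splits into a contribution from $h$ and a contribution involving only $\zeta$ and its $z$-derivatives. Concretely, writing $\mathrm{scal}_h$ for the scalar curvature of the $2$-metric $h$ (so $\mathrm{scal}_h=2K_h$ with $K_h$ the Gauss curvature, a function of $x,y$ only), a direct computation gives a formula of the shape
\[
	\mathrm{scal}_g = \frac{1}{\zeta(z)}\,\mathrm{scal}_h(x,y) + \Psi\!\big(\zeta(z),\zeta'(z),\zeta''(z)\big)\,,
\]
where $\Psi$ depends on $z$ only. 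This is the one genuinely computational step; it is routine because the metric is block-diagonal with the blocks depending on disjoint sets of coordinates and the conformal factor depending only on $z$.

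Given this, the argument is immediate: since $\mathrm{scal}_g$ is assumed constant on $M$, and $\Psi$ depends only on $z$, the term $\tfrac{1}{\zeta(z)}\mathrm{scal}_h(x,y)$ must also depend on $z$ only. But $\zeta(z)$ is a nonconstant function (if $\zeta$ were constant the metric would rescale $h+dz^2$ and the claim is trivial, since then $h+dz^2$ has constant curvature forcing $K_h\equiv 0$), so $\tfrac{1}{\zeta(z)}$ is nonconstant in $z$; hence $\mathrm{scal}_h(x,y)$ must be a constant, i.e.\ $h$ has constant curvature. More carefully: fix two values $z_0\neq z_1$ with $\zeta(z_0)\neq\zeta(z_1)$; subtracting the constancy relation at these two values kills the $\Psi$-part and yields $\big(\tfrac{1}{\zeta(z_0)}-\tfrac{1}{\zeta(z_1)}\big)\mathrm{scal}_h(x,y)=\mathrm{const}$, so $\mathrm{scal}_h$ is constant, and a $2$-metric of constant scalar curvature has constant Gaussian curvature.

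The only point requiring a little care is the degenerate possibility that $\zeta$ is locally constant, but this is handled as above; and one should note that the statement is local (everything takes place in the small neighborhood where the normal form~\eqref{eqn:21.1} is valid), so no global issues arise. I do not expect any serious obstacle here: the separation-of-variables structure of the curvature of $\zeta(z)(h+dz^2)$ is the whole content, and the rest is elementary.
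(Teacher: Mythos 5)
Your proposal is correct and follows essentially the same route as the paper: both compute that $\mathrm{scal}_g = R_h/\zeta + \Psi(z)$ with $\Psi$ depending on $z$ only, and then use the constancy of $\mathrm{scal}_g$ to force $R_h$ to be constant. The paper's final step is marginally cleaner — it differentiates the formula in $x$ and $y$, so no case distinction on whether $\zeta$ is constant is needed (and note that in your degenerate case only $K_h$ constant, not $K_h\equiv0$, follows from the scalar-curvature hypothesis, which is all you need anyway) — but this difference is cosmetic.
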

\begin{proof}
	Let us denote the scalar curvature of $g$ and $h$ by, respectively, $R_g$
	and
	$R_h$.
	It is easily verified that
	$$
	R_g(x,y,z) = \frac{R_h(x,y)}{\zeta(z)} + \frac1{2\zeta(z)^3}\,\left(
	3\,\left(\frac{d\zeta(z)}{dz}\right)^2
	- 4\,\zeta(z)\frac{d^2\zeta(z)}{dz^2}
	\right)
	$$
	In view of the above formula, since $R_g$ is constant, we have that
	$$
	\frac{\partial R_g}{\partial x}=\frac{\partial R_g}{\partial
	y}=0=\frac{\partial R_h}{\partial x}=\frac{\partial R_h}{\partial y}\,.
	$$
	Thus, $R_h$ is a constant.
\end{proof}

The following lemma allows us simplify~\eqref{eqn:g1.2-1.for.discussion} if
$h$ has constant curvature.

\begin{lemma}\label{la:normalise.curvature}
Let $g$ be a metric \eqref{eqn:g1.2-1.for.discussion} such that $h$ has
constant curvature equal to $\kappa\neq 0$. Then such a metric is isometric to
\begin{equation}\label{eqn:metric.w.l.o.g}
	\tilde{\zeta}(\tilde{z})(\tilde{h}+d\tilde{z}^2)\,,\quad
	\tilde{\zeta}(\tilde{z}):=\frac{1}{|\kappa|}\,
					\zeta\left(\frac{\tilde{z}}{\sqrt{|\kappa|}}\right)
\end{equation}
with $\tilde{h}$ a 2-dimensional metric with constant curvature equal to $\frac{\kappa}{|\kappa|}$.
\end{lemma}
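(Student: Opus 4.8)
The plan is to exhibit the isometry explicitly as a rescaling of the $z$-coordinate combined with a rescaling of the conformal factor, exactly as recorded in~\eqref{eqn:metric.w.l.o.g}, and then verify that the result is again a metric of the form~\eqref{eqn:g1.2-1.for.discussion} whose $h$-block has curvature $\kappa/|\kappa|$. Concretely, I would set $\tilde z := \sqrt{|\kappa|}\,z$ and $\tilde h := |\kappa|\,h$; then $d\tilde z^2 = |\kappa|\,dz^2$, so that $\tilde h + d\tilde z^2 = |\kappa|\,(h+dz^2)$. With $\tilde\zeta(\tilde z) := \tfrac1{|\kappa|}\,\zeta(\tilde z/\sqrt{|\kappa|}) = \tfrac1{|\kappa|}\,\zeta(z)$ one obtains
\[
	\tilde\zeta(\tilde z)\,(\tilde h + d\tilde z^2)
	= \tfrac1{|\kappa|}\,\zeta(z)\cdot|\kappa|\,(h+dz^2)
	= \zeta(z)\,(h+dz^2)
	= g\,,
\]
so the coordinate change $z\mapsto\tilde z$ (leaving $x,y$ untouched) together with the constant rescaling $h\mapsto\tilde h$ realises $g$ isometrically in the form~\eqref{eqn:metric.w.l.o.g}. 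Note that $z\mapsto\sqrt{|\kappa|}\,z$ is exactly an admissible transformation of the type~\eqref{eqn:allowed.changes.coord} (with $k_1=\sqrt{|\kappa|}$, $k_2=0$), so the normal form~\eqref{eqn:g1.2-1.for.discussion} is genuinely preserved; the only change beyond this admissible coordinate transformation is the constant conformal rescaling of $h$, which does not affect~\eqref{eqn:g1.2-1.for.discussion} since $h$ is an arbitrary two-dimensional metric.

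It then remains to check the curvature normalisation. Under a constant rescaling $\tilde h = |\kappa|\,h$ of a two-dimensional metric, the Gaussian (equivalently, sectional) curvature scales by the reciprocal of the constant factor: if $h$ has constant curvature $\kappa$, then $\tilde h$ has constant curvature $\kappa/|\kappa| \in\{\pm1\}$. This is the standard behaviour of curvature under homothety, $R_{c\,h} = c^{-1}R_h$ for a positive constant~$c$, applied with $c=|\kappa|$. Hence $\tilde h$ is a two-dimensional metric of constant curvature $\kappa/|\kappa|$, as asserted, and $\tilde\zeta(\tilde z) = \tfrac1{|\kappa|}\,\zeta(\tilde z/\sqrt{|\kappa|})$ has precisely the form stated in~\eqref{eqn:metric.w.l.o.g}.

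There is essentially no obstacle here; the only point requiring a small amount of care is bookkeeping of the two rescalings (of the coordinate $z$ and of the tensor $h$) so that the net effect on $g$ is the identity while the conformal factor and the curvature of the two-dimensional block end up in the claimed normalised form. The hypothesis $\kappa\ne0$ is used only to make $\sqrt{|\kappa|}$ and $1/|\kappa|$ meaningful; the sign of $\kappa$ is recorded in the resulting curvature $\kappa/|\kappa|$.
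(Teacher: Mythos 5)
Your proof is correct and follows essentially the same route as the paper's: rescale $z$ by $\sqrt{|\kappa|}$ and absorb a factor $|\kappa|$ into the two-dimensional block, using the standard curvature scaling $R_{ch}=c^{-1}R_h$. The only cosmetic difference is that you define $\tilde h:=|\kappa|\,h$ directly, whereas the paper phrases it as $h$ being locally isometric to $\tilde h/|\kappa|$ for a model metric $\tilde h$ of curvature $\kappa/|\kappa|$; both satisfy the lemma's conclusion.
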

\begin{proof}
The metric $h$ is locally isometric to $\frac{\tilde{h}}{|\kappa|}$ with $\tilde{h}$ having curvature equal to $\frac{\kappa}{|\kappa|}$. Then, by considering the transformation $z=\frac{\tilde{z}}{\sqrt{|\kappa|}}$, metric \eqref{eqn:g1.2-1.for.discussion} assumes the form \eqref{eqn:metric.w.l.o.g}.
\end{proof}

We are now able to formulate precisely which functions $\zeta$ lead to
constant curvature metrics.

\begin{lemma}\label{la:cc.21}
	Consider the metric~\eqref{eqn:g1.2-1.for.discussion}.
	If it has constant curvature, then by a change of coordinate of
	type~\eqref{eqn:allowed.changes.coord}, locally we achieve one of the
	following:
	\begin{enumerate}
		\item $g$ has zero curvature.
		\begin{enumerate}
			\item\label{item:cc.1a}
			$g = \varepsilon_0\,(\varepsilon_1dx^2+\varepsilon_2dy^2+dz^2)$.
			\item\label{item:cc.1b}
			$g = \frac{\varepsilon_0k}{z^2}\,(
					\varepsilon_1\,dx^2+\varepsilon_2\,dy^2+dz^2)$.
		\end{enumerate}
		\item $g$ has positive constant curvature.
		\begin{enumerate}
			\item
			$g = \frac{k}{\cosh^2(z)}\,(
					dx^2+\varepsilon_2\sin^2(x)dy^2+dz^2)$.
			\item
			$g = -\frac{k}{\cos^2(z)}\,(
					-dx^2+\varepsilon_2\sin^2(x)dy^2+dz^2)$.
			\item
			$g = \frac{k}{\cosh^2(z)}\,(
					-dx^2+\varepsilon_2\sinh^2(x)dy^2+dz^2)$.
			\item
			$g = -\frac{k}{\cos^2(z)}\,(
					dx^2+\varepsilon_2\sinh^2(x)dy^2+dz^2)$.
		\end{enumerate}
		\item\label{item:cc.3} $g$ has negative constant curvature.
		\begin{enumerate}
			\item
			$g = -\frac{k}{\cosh^2(z)}\,(
					dx^2+\varepsilon_2\sin^2(x)dy^2+dz^2)$.
			\item
			$g = \frac{k}{\cos^2(z)}\,(
					-dx^2+\varepsilon_2\sin^2(x)dy^2+dz^2)$.
			\item
			$g = -\frac{k}{\cosh^2(z)}\,(
					-dx^2+\varepsilon_2\sinh^2(x)dy^2+dz^2)$.
			\item\label{item:cc.3d}
			$g = \frac{k}{\cos^2(z)}\,(
					dx^2+\varepsilon_2\sinh^2(x)dy^2+dz^2)$.
		\end{enumerate}
	\end{enumerate}
	where $\varepsilon_i\in\{\pm1\}$ and $k\in\R$, $k>0$.
\end{lemma}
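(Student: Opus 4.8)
The plan is to combine Lemma~\ref{la:Rg.constant.Rh.constant} and Lemma~\ref{la:normalise.curvature} with a direct integration of the ODE that the constancy of $R_g$ imposes on $\zeta$. First I would invoke Lemma~\ref{la:Rg.constant.Rh.constant} to conclude that $h$ has constant curvature, say $\kappa\in\R$. If $\kappa\ne0$, Lemma~\ref{la:normalise.curvature} lets me assume w.l.o.g.\ that $h$ has curvature $\pm1$; if $\kappa=0$, I can take $h$ flat, i.e.\ $h=\varepsilon_1dx^2+\varepsilon_2dy^2$ in suitable coordinates (using the allowed transformations \eqref{eqn:allowed.changes.coord} acting on $(x,y)$). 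In all cases I then have an explicit normal form for $h$, and the only remaining unknown is the univariate function $\zeta(z)$.

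Next I would use the curvature formula recorded in the proof of Lemma~\ref{la:Rg.constant.Rh.constant},
\[
	R_g = \frac{R_h}{\zeta} + \frac{1}{2\zeta^3}\Bigl( 3(\zeta')^2 - 4\zeta\zeta'' \Bigr),
\]
where $R_h\in\{0,\pm2\}$ (or a normalised constant) according to the case. Setting $R_g$ equal to a constant $C$ turns this into a second-order ODE for $\zeta$. The substitution $\zeta = w^{-2}$ (or equivalently working with $\zeta^{-1/2}$) linearises it: one computes $3(\zeta')^2-4\zeta\zeta'' = 4\zeta^{3}\,(\zeta^{-1/2})''/\,(\zeta^{-1/2})$ up to the routine algebra, so the equation becomes a constant-coefficient linear ODE $(\zeta^{-1/2})'' + \text{(const)}\,\zeta^{-1/2} = \text{(const)}$ in $z$. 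Its solutions are, depending on signs, affine functions of $z$, or $\cosh/\sinh$, or $\cos$ of $z$ (after a translation of $z$, which is permitted by \eqref{eqn:allowed.changes.coord}, to absorb the phase). Substituting back gives $\zeta$ proportional to $z^{-2}$, $\cosh^{-2}(z)$, $\cos^{-2}(z)$, or a constant, matching the list in the statement; the constant-curvature requirement on $g$ then pins down, case by case, which sign of $R_h$ pairs with which $\zeta$ and fixes the relation between $C$ and the constants, producing exactly the enumerated metrics (the $\sin^2(x)$ vs.\ $\sinh^2(x)$ alternative and the signs $\varepsilon_i$ simply record the chosen normal form of the constant-curvature surface $h$, Riemannian or Lorentzian).

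The main obstacle is purely bookkeeping rather than conceptual: keeping track of all sign and signature possibilities simultaneously — the ambiguity $\pm$ in front of $dz^2$ already removed by \eqref{eqn:g1.2-1.for.discussion}, the signature of $h$, the sign of its curvature, and the sign of $C$ — so that the list is exhaustive and non-redundant. I would organise this by first solving the ODE abstractly for $\zeta$ (three shapes: affine, hyperbolic, trigonometric, plus the degenerate constant case), then for each shape determining which constant-curvature models of $h$ are compatible with $g$ having constant curvature, and finally reading off the normalisations via Remark~\ref{rem:allowed.transf} (translating $z$, rescaling $z$, and the shift $\rho\mapsto\rho+t$) to bring each surviving case to the displayed form with $k>0$. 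A minor point to check is the flat case $\kappa=0$ with $R_h=0$: then the ODE reduces to $3(\zeta')^2=4\zeta\zeta''$, whose solutions are $\zeta=\text{const}$ (giving \ref{item:cc.1a}) and $\zeta=k/(z+z_0)^2$ (giving \ref{item:cc.1b} after translating $z$), and one verifies directly that both indeed have zero curvature.
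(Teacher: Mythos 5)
Your reduction to constant-curvature $h$ via Lemmas~\ref{la:Rg.constant.Rh.constant} and~\ref{la:normalise.curvature} matches the paper, but the core analytic step of your plan does not work. The identity you rely on is false: with $u=\zeta^{-1/2}$ one computes
\begin{equation*}
	\frac{1}{2\zeta^3}\Bigl(3(\zeta')^2-4\zeta\zeta''\Bigr) \;=\; 4uu''-6(u')^2\,,
\end{equation*}
so the constant-scalar-curvature condition becomes $R_g = R_h u^2 + 4uu'' - 6(u')^2$, which is \emph{not} a linear constant-coefficient ODE (a quick test: $\zeta=1/z^2$ gives $3(\zeta')^2-4\zeta\zeta''=-12/z^6$, while your right-hand side $4\zeta^3(\zeta^{-1/2})''/\zeta^{-1/2}$ vanishes). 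This is essentially the one-dimensional Yamabe equation; its general solutions involve elliptic functions, and it admits elementary solutions outside your claimed trichotomy — e.g.\ for flat $h$ and $R_g=0$ the equation $3(\zeta')^2=4\zeta\zeta''$ is solved by $\zeta=(az+b)^4$, not by $k/(z+z_0)^2$ as you assert in your final check (and $\frac{k}{(z+z_0)^2}\,\delta$ has nonzero constant curvature, while $(az+b)^4\,\delta$ has constant scalar but non-constant sectional curvature). The deeper issue is that in dimension $3$ constant \emph{scalar} curvature is strictly weaker than constant curvature, so working only with $R_g=\mathrm{const}$ produces a much larger solution family and the promised "pinning down" step would force you back to the full sectional-curvature conditions anyway.

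The paper avoids this by imposing constancy of the sectional curvatures of two coordinate $2$-planes (e.g.\ $\langle\partial_x,\partial_y\rangle$ and $\langle\partial_x,\partial_z\rangle$). The plane tangent to the $h$-block yields a \emph{first-order} equation in $\zeta$ — namely $-\tfrac{(\zeta')^2}{4\zeta^3}=\kappa$ for flat $h$, and $-\tfrac{(\zeta')^2-4\zeta^2}{4\zeta^3}=\kappa$ for $h$ of curvature $\pm1$ — which integrates directly to $\zeta=\varepsilon/(c_1z+c_2)^2$, respectively $\zeta=\varepsilon/(c_1\cosh z+c_2\sinh z)^2$; the second sectional-curvature equation then fixes the resulting curvature ($\pm c_1^2$, resp.\ $c_1^2-c_2^2$), and the transformations \eqref{eqn:allowed.changes.coord} give the normal forms, including the distinction between cases~\ref{item:cc.1a} and~\ref{item:cc.1b}. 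To repair your proposal you should replace the scalar-curvature ODE by these sectional-curvature equations (or otherwise impose the full constant-curvature tensor condition), since as written the key linearisation and the concluding flat-case verification are both incorrect.
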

\begin{proof}
	We give the explicit proof for the case when $h$ is Riemannian. For the
	other signatures the proof is analogous.
	So assume that $h$ has signature $(++)$. Due to
	Lemma~\ref{la:Rg.constant.Rh.constant}, we have that $h$ is of
	constant curvature. In view of Lemma~\ref{la:normalise.curvature},
	w.l.o.g., we can
	suppose such curvature equal to $0$, $1$ or $-1$.
	
	\noindent$\bullet$ If $h$ has curvature equal to $0$, by a change of coordinates $(x,y)$
	we have
	\begin{equation}\label{eqn:zeta.x.y.z}
		g = \zeta(z)(dx^2+dy^2+dz^2)\,.
	\end{equation}
Now, let us suppose that \eqref{eqn:zeta.x.y.z} has constant curvature equal to $\kappa$. Thus (by cutting with the planes $\langle \partial_x\,,\partial_y\rangle$ and $\langle \partial_x\,,\partial_z\rangle$) we have that
\[
-\frac1{4\zeta(z)^3}\,\left(\frac{d\zeta(z)}{dz}\right)^2
= \frac{\left(\frac{d\zeta(z)}{dz}\right)^2- \zeta(z)\,\frac{d^2\zeta(z)}{dz^2}}{2\zeta(z)^3}
= \kappa\,.
\]
By solving the first equation we obtain
	\begin{equation}\label{eqn:zeta.right}
		\zeta(z)=\frac{\varepsilon}{(c_1z+c_2)^2}\,,\quad
		c_i\in\mathbb{R}\,,\,\,\varepsilon\in\{\pm1\}\,.
	\end{equation}
On the other hand, we observe that a metric \eqref{eqn:zeta.x.y.z} with
$\zeta(z)$ given by \eqref{eqn:zeta.right} has constant curvature equal to
$\pm c_1^2$.  If $c_1=0$ then we have $\zeta\in\R$ and we easily get the form
\ref{item:cc.1a} of the claim by rescaling the coordinates. If however
$c_1\ne0$, then we achieve the form~\ref{item:cc.1b}.

\medskip\noindent $\bullet$ If $h$ has curvature equal to $1$, then by a
	change of coordinates $(x,y)$  we have
	\begin{equation}\label{eqn:zeta.x.sinx.z}
		g = \zeta(z)(dx^2+\sin(x)^2dy^2+dz^2)
	\end{equation}
	Similarly as in the above case, if the metric \eqref{eqn:zeta.x.sinx.z} has constant  curvature $\kappa$ then
$$
-\frac{\left(\frac{d\zeta(z)}{dz}\right)^2-4\zeta(z)^2}{4\zeta(z)^3} =  \frac{\left(\frac{d\zeta(z)}{dz}\right)^2- \zeta(z)\,\frac{d^2\zeta(z)}{dz^2}}{2\zeta(z)^3}=\kappa
$$
and by solving the first equation we obtain
	\begin{equation*}
		\zeta(z)=\frac{\varepsilon}{(
			c_1\cosh(z)+c_2\sinh(z))^2
		}\,,\quad c_i\in\mathbb{R}\,,\,\,\varepsilon\in\{\pm1\}\,.
	\end{equation*}
The resulting metric \eqref{eqn:zeta.x.sinx.z} has constant curvature equal to $c_1^2-c_2^2$.
	Defining $c=\sqrt{|c_1^2-c_2^2|}$ and
	$\tanh(\gamma)=\frac{c_2}{c_1}$, we arrive at
	\begin{equation*}
		\zeta(z)=\frac{\varepsilon}{c\cosh(z+\gamma)^2}\,,
	\end{equation*}
	and after a translation of $z$ by $\gamma$ we arrive at the claim.
	
	\medskip\noindent $\bullet$ The case when $h$ has curvature equal
	to $-1$ is very similar to the above case: we eventually arrive to the
	form~\ref{item:cc.3d} of the claim. We omit the details.
\end{proof}

\subsection{A splitting-gluing result for projective vector fields of [2-1]-type Levi-Civita metrics}\label{sec:splitting.gluing}

Let us now restrict to Levi-Civita metrics of [2-1] type of non-constant
curvature that admit projective symmetries.
%
Recall that a projective vector field $v$ of non-constant curvature metrics
~\eqref{eqn:g1.2-1.for.discussion} is necessarily of the
form~\eqref{eq:raw.degLC.v}.
It is easy to see that $u$ is a projective vector field for $h$.
Indeed, by computing the geodesic equations \eqref{eqn:3.dim.proj.conn}, where the $f^k_{ij}$'s are given by \eqref{eqn:non.lin.syst},
with $\Gamma^k_{ij}$ the Christoffel symbols of the Levi-Civita connection of $g$, we realise that the first equation of system \eqref{eqn:3.dim.proj.conn} coincides with the $2$-dimensional projective connection associated to the metric $h$.
The following proposition refines this statement further. In preparation of it, recall Lemma \ref{la:eigenvalues.L.roots.solodovnikov}, which provides the following equations:
\begin{subequations}\label{eqn:solodovnikov.21}
	\begin{align}
		\label{eqn:solodovnikov.21.rho}
		0 &= b\rho^2-(d-a)\rho-c
		\\
		\label{eqn:solodovnikov.21.Z}
		\alpha\frac{dZ}{dz} &= -bZ^2+(d-a)Z+c
	\end{align}
\end{subequations}

\begin{proposition}\label{prop:proj.symmetries.descend.21type}~
	
	\noindent(i)
	Let $g$ be a $3$-dimensional Levi-Civita
	metric~\eqref{eqn:g1.2-1.for.discussion} of non-constant curvature
	with projective vector field~\eqref{eq:raw.degLC.v}.
	Then \eqref{eqn:proj.symmetry.u} is
	homothetic for the metric $h$, with
	\begin{equation}\label{eqn:Luh.Ch}
		\lie_uh=-Ch
	\end{equation}
	where
	\begin{equation}\label{eqn:C}
		C = 2b\rho + 3a + d.
	\end{equation}
	
	\noindent(ii)
	Conversely, let $u$ be a homothetic vector field for a
	$2$-dimensional metric $h$, such that \eqref{eqn:Luh.Ch} holds.
	Let $g$ be the metric~\eqref{eqn:g1.2-1.for.discussion}.
	Then the vector field \eqref{eq:raw.degLC.v} is a projective vector
	field for $g$ if and only if
	\begin{subequations}
	\begin{align}
		\label{eqn:alpha.prime}
		-2\alpha'(z) &= C+b\zeta(z) \\
		\label{eqn:alpha.zeta.prime}
		\alpha(z)\zeta'(z) &= -b\zeta^2(z)+\left(2B-C\right)\zeta(z)
	\end{align}
	\end{subequations}
	where
	\begin{equation}\label{eqn:B}
		B = a+d
	\end{equation}
	and $a,b,d$ are as in~\eqref{eq:A}.
\end{proposition}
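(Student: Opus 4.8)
The strategy is to work directly with the structural equations already established: Lemma~\ref{la:eigenvalues.L.roots.solodovnikov} (which gives \eqref{eqn:solodovnikov.21}), Lemma~\ref{la:proj.action.to.metrics} (Equations~\eqref{eq:Lvgg}), and the block structure of the metric~\eqref{eqn:g1.2-1.for.discussion} together with the form~\eqref{eq:raw.degLC.v}--\eqref{eqn:proj.symmetry.u} of $v$. The Benenti tensor $L=L(g,\bar g)$ for the pair~\eqref{eqn:21.1}--\eqref{eqn:21.2} is diagonal with the eigenvalue $\rho$ (constant, multiplicity $2$, on the $h$-block) and the eigenvalue $Z(z)$ (multiplicity $1$, on the $z$-block); this is exactly what produces the two lines of~\eqref{eqn:solodovnikov.21}. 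Note $\zeta = Z-\rho$.

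For part (i): I would compute $\lie_v g$ using~\eqref{eqn:g1.2-1.for.discussion} and $v = u + \alpha(z)\partial_z$. Because $u$ acts only on the $(x,y)$-variables and $\alpha$ depends only on $z$, the Lie derivative splits. Restricting~\eqref{eq:Lvg1} with $n=3$ to the $h$-block (i.e. the $(x,y)$ components $i,j\in\{1,2\}$, where $L\indices{^k_j}=\rho\,\delta^k_j$ and $\tr(L)=2\rho+Z$) gives
\[
	\lie_v g\big|_{h\text{-block}}
	= -4a\,g_{ij} - b\big((2\rho+Z)g_{ij} + \rho\,g_{ij}\big)
	= -\big(4a + b(3\rho+Z)\big)\,\zeta\,h_{ij}\,.
\]
On the other hand, computing $\lie_v(\zeta h)$ directly on this block: the $\partial_z$-part contributes $\alpha\,\zeta'\,h_{ij}$ and the $u$-part contributes $\zeta\,(\lie_u h)_{ij}$, so $\lie_v g|_{h\text{-block}} = \alpha\zeta' h_{ij} + \zeta(\lie_u h)_{ij}$. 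Since the right-hand side from~\eqref{eq:Lvg1} is a function of $z$ times $h_{ij}$, while $\lie_u h$ is the Lie derivative of a $z$-independent $2$-metric along a $z$-independent vector field, separating the $z$-dependent and $(x,y)$-dependent parts forces $\lie_u h = -C h$ for a constant $C$, and also pins down the $z$-equation. Matching the $h_{ij}$-coefficients and using $Z=\zeta+\rho$ to eliminate $Z$ yields $C = 2b\rho + 3a + d$ (with the leftover $z$-dependent terms becoming~\eqref{eqn:alpha.zeta.prime}). That $u$ is \emph{homothetic} (not merely projective) is the content of~\eqref{eqn:Luh.Ch}; the fact that $u$ is projective for $h$ was already observed before the proposition via the first equation of~\eqref{eqn:3.dim.proj.conn}, so I only need the sharper homothety conclusion, which the block computation delivers.

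For part (ii): assuming $\lie_u h = -Ch$ and that $g=\zeta(z)(h+dz^2)$, I would impose that $v=u+\alpha(z)\partial_z$ be projective. The cleanest route is again~\eqref{eq:Lvg1}: a vector field is projective for $g$ precisely when $\lie_v\sigma \in \solSp$ lies in the span of $\sigma,\bar\sigma$, equivalently when $\lie_v g$ has the form~\eqref{eq:Lvg1} for some constants $a,b$ (with $L=L(g,\bar g)$); so I compute $\lie_v g$ explicitly and match. On the $h$-block I recover, using $\lie_u h=-Ch$, the relation $-C\zeta + \alpha\zeta' = -\big(4a+b(3\rho+Z)\big)\zeta$, i.e. $\alpha\zeta' = -b\zeta^2 + (2B-C)\zeta$ after substituting $Z=\zeta+\rho$, $B=a+d$, and $C=2b\rho+3a+d$ — this is~\eqref{eqn:alpha.zeta.prime}. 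On the $dz^2$-block, $\lie_v(\zeta\,dz^2) = (\alpha\zeta' + 2\alpha'\zeta)\,dz^2$, and~\eqref{eq:Lvg1} with $L\indices{^z_z}=Z$, $\tr L = 2\rho+Z$ gives $-4a - b(2\rho+2Z)$ times $\zeta\,dz^2$; equating and using the already-derived $h$-block relation to cancel the $\alpha\zeta'$ term isolates $-2\alpha'\zeta = (4a + b(2\rho+2Z) - C)\zeta$ minus the $h$-block contribution, which simplifies to $-2\alpha' = C + b\zeta$, i.e.~\eqref{eqn:alpha.prime}. Conversely, if~\eqref{eqn:alpha.prime} and~\eqref{eqn:alpha.zeta.prime} hold, running these computations backwards shows $\lie_v g$ has exactly the shape~\eqref{eq:Lvg1}, hence $\lie_v\sigma\in\solSp$, hence $v$ is projective. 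One must also check consistency with~\eqref{eqn:solodovnikov.21.Z}: differentiating appropriately or substituting $Z=\zeta+\rho$ into~\eqref{eqn:alpha.zeta.prime} should reproduce $\alpha Z' = -bZ^2+(d-a)Z+c$ given~\eqref{eqn:solodovnikov.21.rho}; this is a routine but necessary check that the $b,c,a,d$ appearing here are the genuine structure constants of the $\solSp$-action, not free parameters.

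\textbf{Main obstacle.} The delicate point is the bookkeeping in (ii): ensuring that the pair~\eqref{eqn:alpha.prime}--\eqref{eqn:alpha.zeta.prime}, together with $\lie_u h=-Ch$, is not just \emph{necessary} but \emph{sufficient} for $v$ to be projective — i.e. that no further component of the $18$-equation symmetry system~\eqref{eqn:cond.proj} imposes an independent constraint. The conceptual resolution is that projectivity of $v$ for $g$ is equivalent to $\lie_v g$ lying in the two-parameter family~\eqref{eq:Lvg1} (a consequence of $\dim\solSp=2$ and Lemma~\ref{la:proj.action.to.metrics}), so once $\lie_v g$ is shown to have that form the converse is automatic; the only real work is the careful index-by-index matching of the $h$-block and $dz^2$-block (and the vanishing of off-block components, which follows since $u$ is $z$-independent and $\alpha$ is $(x,y)$-independent so $\lie_v g$ has no $dx\,dz$ or $dy\,dz$ terms, consistent with $L$ being block-diagonal). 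I would also need to be slightly careful that $\zeta=Z-\rho$ is not identically zero (otherwise $L$ is a multiple of the identity and $g$ is of constant curvature, excluded by hypothesis), which legitimises dividing by $\zeta$ when separating variables.
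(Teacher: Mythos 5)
Your route is the paper's route: decompose $\lie_vg=\alpha\zeta'\,h+\zeta\,\lie_uh+(\alpha\zeta'+2\alpha'\zeta)\,dz^2$, project \eqref{eq:Lvg1} (with $n=3$ and $L$ block-diagonal with eigenvalues $\rho$ and $Z=\zeta+\rho$) onto the $h$-block and the $dz^2$-block, and read off \eqref{eqn:Luh.Ch}, \eqref{eqn:alpha.prime} and \eqref{eqn:alpha.zeta.prime}; your block computations check out, and your separation argument for why $\lie_uh$ is a \emph{constant} multiple of $h$ is even more explicit than the paper's.

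Two points need repair, however. First, in part (i) you claim that $C=2b\rho+3a+d$ follows from ``matching the $h_{ij}$-coefficients and using $Z=\zeta+\rho$''. It cannot: the constant $d$ does not occur in \eqref{eq:Lvg1} at all, so the two blocks only give $\lie_uh=-Ch$ for \emph{some} constant $C$, together with $\alpha\zeta'=-b\zeta^2+(C-4a-4b\rho)\zeta$ and $-2\alpha'=C+b\zeta$. To identify $C$ with \eqref{eqn:C} you must bring in the eigenvalue relations \eqref{eqn:solodovnikov.21} from Lemma~\ref{la:eigenvalues.L.roots.solodovnikov}: substitute $Z=\zeta+\rho$ into \eqref{eqn:solodovnikov.21.Z} and use \eqref{eqn:solodovnikov.21.rho} to eliminate $c$, which yields \eqref{eqn:alpha.zeta.prime} with coefficient $d-a-2b\rho=2B-C$ and hence the formula for $C$ --- exactly as the paper does. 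You list \eqref{eqn:solodovnikov.21} in your toolkit and carry out this very computation later as a ``consistency check'' in (ii), but in (i) it is not a check: it is the step that produces \eqref{eqn:C}. Second, for the sufficiency in (ii) you assert that projectivity of $v$ is \emph{equivalent} to $\lie_vg$ having the form \eqref{eq:Lvg1}, ``a consequence of $\dim\solSp=2$ and Lemma~\ref{la:proj.action.to.metrics}''; that lemma gives only the implication from projectivity to \eqref{eq:Lvg1}, so the converse is not automatic from what you cite. The paper's proof of (ii) is similarly compressed at this point, but in the paper the sufficiency is secured independently by Proposition~\ref{prop:gluing} (the $18$ symmetry PDEs \eqref{eqn:cond.proj} reduce to \eqref{eqn:alpha.zeta.ODEs}) together with Remark~\ref{rmk:partial.integration.alpha.zeta}, which shows that \eqref{eqn:alpha.zeta.ODEs} is equivalent to the pair \eqref{eqn:alpha.prime}--\eqref{eqn:alpha.zeta.prime}; you should either invoke that direct computation or actually prove the converse of Lemma~\ref{la:proj.action.to.metrics} rather than declare it automatic.
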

\begin{proof}
	We begin with part (i) by computing the projections of \eqref{eq:Lvg1}
	onto the 2-dimensional component $M_2$ and the 1-dimensional component
	$M_1$, respectively, see the discussion after~\eqref{eqn:21}.
	Since $v$ is of the form \eqref{eq:raw.degLC.v}, we infer,
	\begin{equation}\label{eqn:decompose.Lvg}
		\lie_vg
		= \lie_v(\zeta(h+dz^2))
		= \zeta'\alpha\,h+\zeta\lie_uh+\alpha\zeta'dz^2+2\zeta\alpha'dz^2\,.
	\end{equation}
	Substituting this in \eqref{eq:Lvg1}, and inserting $n=3$ as well as
	$L=\rho(\partial_x\otimes dx+\partial_y\otimes
	dy)+(\zeta+\rho)\partial_z\otimes dz$, we arrive at
	\begin{align}
		\label{eqn:Luh.pre}
		\lie_uh &= \left(
						-\frac{\zeta'}{\zeta}\,\alpha
						-4a -b(4\rho+\zeta)
					\right)\,h
		\\
		\label{eqn:alpha.prime.pre}
		-2\alpha' &= \frac{\zeta'}{\zeta}\,\alpha
						+4a+2b\rho\zeta +4b\rho\,.
	\end{align}
	Combining the equations~\eqref{eqn:solodovnikov.21}, we
	obtain~\eqref{eqn:alpha.zeta.prime}.
	Reinserting~\eqref{eqn:alpha.zeta.prime} into~\eqref{eqn:Luh.pre}, we
	find~\eqref{eqn:Luh.Ch}.

	For part (ii) consider~\eqref{eqn:decompose.Lvg} and
	insert~\eqref{eqn:Luh.Ch}.
	Then~\eqref{eqn:alpha.prime} holds due to~\eqref{eqn:solodovnikov.21.Z}.
	Reinserting into~\eqref{eqn:alpha.prime.pre}, we find
	\eqref{eqn:alpha.zeta.prime}.
We compute $\lie_vg=\lie_v(\zeta(h+dz^2))$, but now for a given $h$ with
	$\lie_uh=-Ch$. We substitute this into~\eqref{eq:Lvg1} and take the
	projection onto the 2-dimensional and the 1-dimensional component. This
	yields~\eqref{eqn:alpha.prime}. Equation~\eqref{eqn:alpha.zeta.prime} is
	\eqref{eqn:solodovnikov.21} rewritten in terms of $\zeta$.
\end{proof}

\begin{remark}
In the hypotheses of Proposition \ref{prop:proj.symmetries.descend.21type}, vector field $u=v^1(x,y)\partial_x+v^2(x,y)\partial_y$ is a homothety for the metric $h$. In the case when $h$ is a Riemannian metric this implies that $u$ is a holomorphic vector field in the following sense: by working in conformal coordinates $(x,y)$, function $v^1(x,y)+iv^2(x,y)$ turns out to be holomorphic. In the case when $h$ is of Lorentzian signature, $u$ turns out to be a para-holomorphic vector field, i.e., by working in null coordinates $(x,y)$ (i.e., $h=e^{f(x,y)}dxdy$ for some function $f$), $v^1=v^1(x)$ and $v^2=v^2(y)$, see \cite{Alekseevsky_2009} and \cite{MANNO2012365} for more details.
\end{remark}

\begin{proposition}\label{prop:gluing}
	Let $h=h_{11}dx^2+2h_{12}dxdy+h_{22}dy^2$, $h_{ij}=h_{ij}(x,y)$, be a
	$2$-dimensional metric with homothetic vector field~$u$ that is nowhere
	vanishing such that $\lie_uh=-Ch$, $C\in\R$. Then
	the vector field \eqref{eq:raw.degLC.v} is
	projective for the metric $g=\zeta(z)\,(h+dz^2)$ if and only if $\zeta$
	and $\alpha$ satisfy the system
	\begin{subequations}\label{eqn:alpha.zeta.ODEs}
		\begin{align}
			\zeta (\alpha \zeta'' -\alpha'\zeta' - C \zeta') - \alpha {\zeta'}^2 &=0
			\label{eqn:alpha.zeta.ODEs.1}
			\\
			\zeta(-2\alpha''\zeta +\alpha \zeta'' + \alpha'\zeta' )- \alpha
			{\zeta'}^2&=0
			\label{eqn:alpha.zeta.ODEs.2}
		\end{align}
	\end{subequations}
	with $\zeta$ being a nowhere vanishing solution.
\end{proposition}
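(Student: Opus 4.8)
The plan is to deduce Proposition~\ref{prop:gluing} from Proposition~\ref{prop:proj.symmetries.descend.21type}(ii) by eliminating the constants that parametrise the action of $v$ on the metrisation space. For the given metric $h$ with $\lie_u h=-Ch$ and the given $\zeta$, Proposition~\ref{prop:proj.symmetries.descend.21type}(ii) characterises projectivity of $v=u+\alpha(z)\partial_z$ by equations~\eqref{eqn:alpha.prime} and~\eqref{eqn:alpha.zeta.prime}, in which, once $C$ is fixed, only $b$ and the combination $B=a+d$ actually occur. Conversely, any prescribed real values of $b$ and $B$ are realised by an admissible choice of $a,d,\rho,c$ consistent with~\eqref{eqn:C} and~\eqref{eqn:solodovnikov.21.rho}: choose $a$ freely, put $d=B-a$; if $b\neq0$ set $\rho=(C-B-2a)/(2b)$, otherwise take $a=(C-B)/2$ with $\rho$ arbitrary; finally put $c=b\rho^2-(d-a)\rho$. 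Therefore $v$ is projective for $g=\zeta(h+dz^2)$ if and only if there exist real constants $b,B$ with
\begin{align*}
  -2\alpha'(z) &= C+b\,\zeta(z),\\
  \alpha(z)\,\zeta'(z) &= -b\,\zeta(z)^2+(2B-C)\,\zeta(z).
\end{align*}

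The second step is to eliminate $b$ and $B$, using that $\zeta$ is nowhere vanishing. The first equation forces $b=(-2\alpha'-C)/\zeta$; substituting into the second yields $2B=(\alpha\zeta'-2\alpha'\zeta)/\zeta$. Hence $v$ is projective precisely when both functions $(-2\alpha'-C)/\zeta$ and $(\alpha\zeta'-2\alpha'\zeta)/\zeta$ are constant, i.e.\ when their $z$-derivatives vanish. A direct differentiation gives $\frac{d}{dz}\big((\alpha\zeta'-2\alpha'\zeta)/\zeta\big)=\zeta^{-2}\big(\zeta(-2\alpha''\zeta+\alpha\zeta''+\alpha'\zeta')-\alpha{\zeta'}^2\big)$, whose numerator is the left-hand side of~\eqref{eqn:alpha.zeta.ODEs.2}, and $\frac{d}{dz}\big((-2\alpha'-C)/\zeta\big)=\zeta^{-2}(-2\alpha''\zeta+2\alpha'\zeta'+C\zeta')$, where $-2\alpha''\zeta+2\alpha'\zeta'+C\zeta'$ equals $\zeta^{-1}$ times the difference of the left-hand sides of~\eqref{eqn:alpha.zeta.ODEs.2} and~\eqref{eqn:alpha.zeta.ODEs.1}. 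Since $\zeta\neq0$, requiring both derivatives to vanish is thus equivalent to the system~\eqref{eqn:alpha.zeta.ODEs}.

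For the remaining implication, from~\eqref{eqn:alpha.zeta.ODEs} to the existence of such constants, I set $b:=(-2\alpha'-C)/\zeta$ and $B:=(\alpha\zeta'-2\alpha'\zeta)/(2\zeta)$; by the identities just recorded these functions are constant, and the two displayed equations then hold by construction, using $-b\zeta^2=(2\alpha'+C)\zeta$ and $2B\zeta=\alpha\zeta'-2\alpha'\zeta$. Together with the previous paragraph this proves the proposition. The only genuinely delicate point is the opening bookkeeping---verifying that passing from Proposition~\ref{prop:proj.symmetries.descend.21type}(ii) to the two-parameter family indexed by $(b,B)$ neither loses nor gains solutions; after that, the argument is just a short sequence of routine differentiations.
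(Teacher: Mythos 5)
Your elimination computation is correct, and it is in fact the same manipulation the paper records separately in Remark~\ref{rmk:partial.integration.alpha.zeta}: for nowhere vanishing $\zeta$, the existence of constants $b$ and $B$ satisfying \eqref{eqn:alpha.prime}--\eqref{eqn:alpha.zeta.prime} is equivalent to the constancy of $(-2\alpha'-C)/\zeta$ and $(\alpha\zeta'-2\alpha'\zeta)/\zeta$, hence to the system \eqref{eqn:alpha.zeta.ODEs}. But this is not how the paper proves Proposition~\ref{prop:gluing}: there the proof is a self-contained verification that, in coordinates where $u=\partial_x$ and $h$ has the form \eqref{eqn:lie.metric}, the $18$ point-symmetry conditions \eqref{eqn:cond.proj} for the projective connection of $g=\zeta(h+dz^2)$ reduce exactly to the two ODEs. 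Your route instead outsources the equivalence ``$v$ projective $\Leftrightarrow$ \eqref{eqn:alpha.prime}--\eqref{eqn:alpha.zeta.prime} for some constants'' to Proposition~\ref{prop:proj.symmetries.descend.21type}(ii), and this is where the argument has a genuine gap.

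The constants $a,b,c,d$ in Proposition~\ref{prop:proj.symmetries.descend.21type}(ii) are by definition the entries of the matrix \eqref{eq:A}, i.e.\ they encode the action of $\lie_v$ on a two-dimensional metrisation space $\solSp$. This presupposes degree of mobility $2$, hence (by Remark~\ref{rmk:dom.is.two.or.cc}) that $g$ is of non-constant curvature, and its proof rests on Lemma~\ref{la:proj.action.to.metrics} and \eqref{eqn:solodovnikov.21}, which carry the same hypotheses. Proposition~\ref{prop:gluing}, by contrast, is stated (and proved by the direct computation) with no curvature restriction: for instance $h=dx^2+dy^2$, $C=0$, $\zeta\equiv 1$ is within its scope, $g$ is then flat, $\dim\solSp>2$, the matrix \eqref{eq:A} is undefined, and Proposition~\ref{prop:proj.symmetries.descend.21type}(ii) is simply unavailable, yet the claimed equivalence (here $\alpha''=0$) still holds. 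Moreover, in the direction you need most --- from the ODEs back to projectivity of $v$ --- your ``bookkeeping'' only solves the algebraic constraints \eqref{eqn:C} and \eqref{eqn:solodovnikov.21.rho}; it does not show that the numbers $a,d,\rho,c$ so chosen are the actual matrix entries of $\lie_v$ for an admissible choice of $\bar\sigma$, which is the only reading under which the ``if'' part of Proposition~\ref{prop:proj.symmetries.descend.21type}(ii) is literally applicable. Read existentially instead, that ``if'' direction amounts to the claim that a relation of type \eqref{eq:Lvg1} holding for \emph{some} constants forces $v$ to be projective --- a nontrivial statement that the paper never proves independently, and which the direct verification via \eqref{eqn:cond.proj} (i.e.\ the paper's own proof of Proposition~\ref{prop:gluing}) is precisely designed to avoid. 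So as it stands your argument establishes the proposition only for $g$ of non-constant curvature, and even there its backward implication rests on an unproven strengthening of Proposition~\ref{prop:proj.symmetries.descend.21type}(ii) rather than on anything the paper makes available before Proposition~\ref{prop:gluing}.
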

\begin{proof}
	In an appropriate system of coordinates  $(x,y)$,
	\begin{equation}\label{eqn:lie.metric}
		u=\partial_x\,,\quad
		h=e^{-C x}
		\left(
		\begin{array}{cc}
			E(y) & F(y)
			\\
			F(y) & G(y)
		\end{array}
		\right)\,,\quad C\in\R\,.
	\end{equation}
	A direct computation shows that the 18 PDEs forming System \eqref{eqn:cond.proj} reduce to only 2 independent conditions, namely
 \eqref{eqn:alpha.zeta.ODEs}.
\end{proof}

\begin{remark}
	For any $h$ of the form \eqref{eqn:lie.metric}, there exist non-zero functions $\zeta(z)$ and $\alpha(z)$ such that the vector field \eqref{eq:raw.degLC.v}, with $\lie_uh=-Ch$, is a projective vector field for the metric \eqref{eqn:g1.2-1.for.discussion}.
	For instance, if $C\ne0$, the functions
	\[
		\zeta=C z\,,\quad \alpha=-\frac{1}{2}Cz
	\]
	solve system \eqref{eqn:alpha.zeta.ODEs}.
	For the case $C=0$, the functions $\zeta=\frac{1}{z^2}$, $\alpha=\frac{1}{z}$ solve system \eqref{eqn:alpha.zeta.ODEs}, for example.
\end{remark}

\begin{remark}\label{rmk:partial.integration.alpha.zeta}
	Note that, in Proposition~\ref{prop:gluing}, Equations
	\eqref{eqn:alpha.zeta.ODEs}
	are consequences of
	\eqref{eqn:alpha.prime} and~\eqref{eqn:alpha.zeta.prime}.
	Indeed  \eqref{eqn:alpha.prime} gives
	\begin{equation}\label{eqn:b.Gianni}
		b=-\frac{2\alpha'+C}{\zeta}\,.
	\end{equation}
	Then differentiate \eqref{eqn:b.Gianni} once w.r.t.~$z$ to obtain the
	equation
	\begin{equation}\label{eqn:b.Gianni.2}
	2\zeta'\alpha'-2\zeta\alpha'' +C\zeta'=0\,.
	\end{equation}
	Likewise,
	solve~\eqref{eqn:alpha.zeta.prime} for $2B-C$, then differentiate w.r.t.~$z$,      to obtain:
\begin{equation}\label{eqn:blablabla}
\zeta(\alpha'\zeta'+\alpha\zeta'')+b\zeta'\zeta^2=\alpha\zeta'^2\,.
\end{equation}
	Inserting \eqref{eqn:b.Gianni} into the Equation \eqref{eqn:blablabla}, 
	we obtain \eqref{eqn:alpha.zeta.ODEs.1}. Equation 
	\eqref{eqn:alpha.zeta.ODEs.2} then follows from 
	\eqref{eqn:b.Gianni},\eqref{eqn:b.Gianni.2} and~\eqref{eqn:blablabla}.
	Conversely, note that from \eqref{eqn:alpha.zeta.ODEs} we obtain
	\eqref{eqn:b.Gianni.2}, which is equivalent to \eqref{eqn:b.Gianni} since
	$b$ an arbitrary constant.
\end{remark}

The following lemma covers the most basic situation, namely constant 
$\zeta(z)$.

\begin{corollary}\label{cor:constant.Z}
	Let $h$ be a $2$-dimensional metric. Let $g=k\,(h+dz^2)$ for some
	non-zero constant $k$.
	If~$g$ is of non-constant curvature, the projective symmetries of $g$ are
	the vector fields
	\[
		v = u+(k_1z+k_0)\partial_z\,,
	\]
	where $u$ is a homothetic vector field of $h$, and where $k_0,k_1\in\R$.
\end{corollary}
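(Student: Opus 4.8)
The plan is to specialise the structure equations of Section~\ref{sec:21} to the case of constant $\zeta$. Note that $g=k(h+dz^{2})$ is precisely the metric~\eqref{eqn:g1.2-1.for.discussion} with $\zeta(z)\equiv k$, taking $Z(z)=k+\rho$ for any $\rho\in\R\setminus\{0,-k\}$; a non-proportional, projectively equivalent partner $\bar g$ with diagonalisable Benenti tensor is then given by~\eqref{eqn:21.2}, so $g$ is a $[2\text{-}1]$-type Levi-Civita metric and, being of non-constant curvature by hypothesis, all the results of this section apply to it. In particular, by Example~\ref{ex:v.block} every projective vector field of $g$ is of the form $v=u+\alpha(z)\partial_{z}$ with $u=v^{1}(x,y)\partial_{x}+v^{2}(x,y)\partial_{y}$.

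For the forward inclusion I would invoke Proposition~\ref{prop:proj.symmetries.descend.21type}: part~(i) forces $u$ to be homothetic for $h$, $\lie_{u}h=-Ch$, and part~(ii) then yields~\eqref{eqn:alpha.prime} and~\eqref{eqn:alpha.zeta.prime} for suitable constants $a,b,d$, with $B=a+d$. Putting $\zeta\equiv k$ (so $\zeta'\equiv0$, $k\ne0$) into~\eqref{eqn:alpha.zeta.prime} gives $bk=2B-C$, and then~\eqref{eqn:alpha.prime} reads $-2\alpha'=C+bk=2B$. Hence $\alpha'\equiv -B$ is constant, i.e.\ $\alpha(z)=k_{1}z+k_{0}$ with $k_{1}=-B$ and $k_{0}\in\R$, and $u$ is homothetic for $h$ --- exactly the asserted form.

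For the reverse inclusion, given a homothetic vector field $u$ of $h$ (with $\lie_{u}h=-Ch$) and an affine function $\alpha(z)=k_{1}z+k_{0}$, I would apply Proposition~\ref{prop:gluing}: the vector field $v=u+\alpha\partial_{z}$ is projective for $g$ if and only if the system~\eqref{eqn:alpha.zeta.ODEs} holds; but with $\zeta\equiv k$ one has $\zeta'=\zeta''=0$, whence~\eqref{eqn:alpha.zeta.ODEs.1} is satisfied identically and~\eqref{eqn:alpha.zeta.ODEs.2} collapses to $-2k^{2}\alpha''=0$, which holds because $\alpha$ is affine. Thus $v$ is projective for $g$, completing the description of $\projalg(g)$.

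The single delicate point is that Proposition~\ref{prop:gluing} is stated for a nowhere vanishing $u$. I would circumvent this by applying it on the open dense subset where $u\ne0$ and extending to all of $M$, using that being an infinitesimal symmetry --- condition~\eqref{eqn:cond.proj} --- is a closed condition; the remaining case $u\equiv0$ (so $C=0$) is covered by Proposition~\ref{prop:proj.symmetries.descend.21type}(ii), which does not require $u$ to be nowhere vanishing and again forces $\alpha''=0$. Apart from this, the argument is a routine specialisation of Propositions~\ref{prop:proj.symmetries.descend.21type} and~\ref{prop:gluing}, so I do not anticipate any serious obstacle.
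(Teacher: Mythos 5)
Your proposal is correct and follows essentially the same route as the paper: the paper's proof is precisely the substitution of the constant $\zeta\equiv k$ into the system~\eqref{eqn:alpha.zeta.ODEs} of Proposition~\ref{prop:gluing}, yielding $-2k^2\alpha''(z)=0$ and hence $\alpha$ affine, with the block form of $v$ and the homothety of $u$ taken from Example~\ref{ex:v.block} and Proposition~\ref{prop:proj.symmetries.descend.21type} exactly as you do. Your additional use of the first-order system~\eqref{eqn:alpha.prime}--\eqref{eqn:alpha.zeta.prime} for the forward direction and your patch of the nowhere-vanishing hypothesis on $u$ (open dense set plus the case $u\equiv0$) only make explicit details the paper's one-line proof leaves implicit.
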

\begin{proof}
	Substituting $\zeta=k$ into the system~\eqref{eqn:alpha.zeta.ODEs} of
	Proposition~\ref{prop:gluing}, we arrive at $-2k^2\alpha''(z)=0$,
	i.e.~$\alpha''(z)=0$. This proves the claim.
\end{proof}

The following example illustrates the situation.
\begin{example}\label{ex:submaximal}
	Take the metric $h=dx^2+\sin^2(x)dy^2$, which has the homothetic vector
	fields (in fact, these are all Killing vector fields)
	\[
		u = k_0\partial_y
			+k_1\,\left( -\cos(y)\partial_x+\cot(x)\sin(y)\partial_y \right)
			+k_2\,\left( \sin(y)\partial_x+\cot(x)\cos(y)\partial_y \right)\,.
	\]
	The metric $g=dx^2+\sin^2(x)dy^2+dz^2$, on the other hand, admits the
	projective symmetry algebra
	\[
		u = k_0\partial_y
			+k_1\,\left( -\cos(y)\partial_x+\cot(x)\sin(y)\partial_y \right)
			+k_2\,\left( \sin(y)\partial_x+\cot(x)\cos(y)\partial_y \right)
			+k_3z\partial_z
			+k_4\partial_z\,.
	\]
\end{example}

\begin{remark}
A 3-dimensional Riemannian metric, which is not of constant curvature, has a projective algebra of dimension $\leq5$ \cite{kruglikov_2014}. The sub-maximal dimension $5$ is realised in Example \ref{ex:submaximal}, and it is realised also for $g=dx^2+\sinh^2(x)dy^2+dz^2$.

Note that for a 3-dimensional metric of Lorentz signature the sub-maximal dimension of the projective algebra is 6 instead. According to
\cite{kruglikov_2014}, see also the references therein, this is realised for the metrics ($k\neq0$)
\begin{align*}
	g_1 &= kdx^2+2(2-c)e^{cx}dxdy+e^{2x}dz^2\,,\quad c\notin\{1, 2\} \\
	g_2 &= kdx^2+e^{2x}(2dxdy-dz^2) \\
	g_3 &= kdx^2+e^{x\sqrt{4-\omega^2}}\left(
				2dxdy-\frac4{\omega^2}\cos^2(\tfrac{\omega x}{2})dz^2
			\right)\,,\quad \omega\neq 0
\end{align*}
None of these metrics is of Levi-Civita type.
\end{remark}

The following example is based on a 2-dimensional metric of non-constant
curvature.
\begin{example}
	Consider the metric $h=e^{(\beta+2)x}dx^2+e^{\beta x}dy^2$, which has the
	homothetic vector fields
	\[
		u = k_0\,\left( \partial_x+y\partial_y \right)
			+k_1\,\partial_y\,.
	\]
	The metric $g=h+dz^2$, on the other hand, admits the projective symmetry
	algebra
	\[
		u = k_0\,\left( \partial_x+y\partial_y \right)
			+k_1\,\partial_y
			+k_2\,z\partial_z
			+k_3\,\partial_z\,.
	\]
\end{example}


\subsection{Characterisation of \texorpdfstring{$\zeta(z)$}{zeta} for metrics
\texorpdfstring{\eqref{eqn:g1.2-1.for.discussion}}{} admitting projective
vector fields}\label{sec:1D.component}

The purpose of the current section is to find the functions $\zeta(z)$ that
can appear in \eqref{eqn:g1.2-1.for.discussion}, assuming the metric is of
non-constant curvature and admits non-trivial projective vector fields. The main outcome of this section is an ODE for $\zeta(z)$ (see Equation \eqref{eqn:zeta.ode} below), which holds
under mild hypotheses.
%
We start by considering the very special case when the polynomial $\solpol_A$
(see \eqref{eq:solodovnikov.polynomial}) vanishes.

\begin{lemma}\label{la:zeta.ode}
	Let $g$ be the metric \eqref{eqn:g1.2-1.for.discussion} with $\zeta'(z)\ne0$ and of non-constant curvature. Assume $v$ is a projective vector field of $g$.
	\begin{enumerate}
		\item
			If $\solpol_A=0$ (recall Definition \ref{def:solpol}), then $A=0$.
		\item
			If $\solpol_A\ne0$, then $\zeta(z)$ satisfies the ODE
			\begin{equation}\label{eqn:zeta.ode}
				\zeta''=\frac{3b\zeta+C-4B}{2\zeta\,(b\zeta+C-2B)}{\zeta'}^2\,,
			\end{equation}
			where $C$ is as in \eqref{eqn:C} and $B$ as in \eqref{eqn:B}.
	\end{enumerate}
\end{lemma}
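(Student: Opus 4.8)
The plan is to derive everything from Proposition~\ref{prop:proj.symmetries.descend.21type}. Since $g$ is a $[2\text{-}1]$-type Levi-Civita metric of non-constant curvature, its projective vector field necessarily has the shape $v=u+\alpha(z)\partial_z$ with $u$ as in~\eqref{eqn:proj.symmetry.u}; part~(i) of that proposition tells us $u$ is homothetic for $h$ with $\lie_uh=-Ch$, $C=2b\rho+3a+d$, and the ``only if'' direction of part~(ii) then yields the two ODEs~\eqref{eqn:alpha.prime} and~\eqref{eqn:alpha.zeta.prime}, namely $-2\alpha'=C+b\zeta$ and $\alpha\zeta'=-b\zeta^2+(2B-C)\zeta$ with $B=a+d$. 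Finally, since the constant $\rho$ is an eigenvalue of $L(g,\bar g)$, Lemma~\ref{la:eigenvalues.L.roots.solodovnikov}, equivalently~\eqref{eqn:solodovnikov.21.rho}, gives $b\rho^2-(d-a)\rho-c=0$. These three relations are the only input I will use.

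For part~(1): if $\solpol_A\equiv0$ then $b=c=0$ and $a=d$, so $C=4a$ and $2B-C=0$. Hence~\eqref{eqn:alpha.zeta.prime} collapses to $\alpha\zeta'=0$; since $\zeta'\ne0$ this forces $\alpha\equiv0$, so $\alpha'\equiv0$, and then~\eqref{eqn:alpha.prime} gives $C=0$, i.e.\ $a=d=0$. Therefore $A=0$.

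For part~(2): since $\zeta'\ne0$ I solve~\eqref{eqn:alpha.zeta.prime} for $\alpha=\zeta(2B-C-b\zeta)/\zeta'$, differentiate once in $z$, and eliminate $\alpha'$ using~\eqref{eqn:alpha.prime}. Clearing the common denominator $(\zeta')^2$ and collecting terms gives $\zeta(2B-C-b\zeta)\,\zeta''=\tfrac12(4B-C-3b\zeta)(\zeta')^2$, which after reordering signs is exactly~\eqref{eqn:zeta.ode}. This is a short, routine differentiation and I do not expect difficulties there.

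The only genuine obstacle is the legitimacy of dividing by $b\zeta+C-2B=-(2B-C-b\zeta)$, the denominator appearing in~\eqref{eqn:zeta.ode}. I would argue this expression cannot vanish identically: if it did, then $b=0$ and $C=2B$, and substituting $C=2b\rho+3a+d$, $B=a+d$ forces $a=d$; but then the relation $b\rho^2-(d-a)\rho-c=0$ reduces to $c=0$, so $\solpol_A\equiv0$, contradicting the hypothesis of part~(2). Since $\zeta'\ne0$ makes $\zeta$ locally injective, $b\zeta+C-2B$ is nonzero on an open dense set, so~\eqref{eqn:zeta.ode} holds there and extends to the remaining isolated zeros by continuity. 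This same local injectivity of $\zeta$ is also what justifies passing from $\alpha\zeta'=0$ to $\alpha\equiv0$ in part~(1).
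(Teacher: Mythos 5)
Your proposal is correct and follows essentially the same route as the paper: part (1) via $\alpha\zeta'=0\Rightarrow\alpha\equiv0\Rightarrow C=0\Rightarrow a=d=0$, and part (2) by solving \eqref{eqn:alpha.zeta.prime} for $\alpha$, differentiating, and eliminating $\alpha'$ with \eqref{eqn:alpha.prime}. Your justification that $b\zeta+C-2B$ cannot vanish identically (via \eqref{eqn:solodovnikov.21.rho} forcing $c=0$ and hence $\solpol_A=0$) is just a slightly more explicit version of the paper's remark and is sound.
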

\begin{proof}
	For the first part of the claim, $\solpol_A=0$ implies that $b=0$, $a=d$ and $c=0$.
	Because of	Equation~\eqref{eqn:solodovnikov.21}, $\alpha(z)=0$.
	Using \eqref{eqn:alpha.prime} we conclude $C=0$, and thus $d=-3a$ because	$b=0$. Combining this with $a=d$, it follows that $a=d=0$ and thus $A=0$.
%
%
	
	For the second part of the claim, since $\zeta'(z)\ne0$, we can solve \eqref{eqn:alpha.zeta.prime} for $\alpha(z)$. Resubstituting this expression into~\eqref{eqn:alpha.prime}, and reorganising the terms, we obtain\eqref{eqn:zeta.ode}. Note that the denominator does not identically vanish
because of \eqref{eqn:solodovnikov.21.Z}, keeping in mind that $\zeta(z)$ cannot be zero.
\end{proof}



We conclude the section with the following lemma.
\begin{lemma}
	Let $g$ be a [2-1] Levi-Civita metric~\eqref{eqn:g1.2-1.for.discussion} 
	that is not of constant curvature.
	Then $v=\alpha(z)\partial_z$ is a projective vector field for $g$ if
	$\alpha(z)$ and $\zeta(z)$ satisfy the set of
	equations~\eqref{eqn:alpha.prime} and~\eqref{eqn:alpha.zeta.prime}.
\end{lemma}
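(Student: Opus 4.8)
The plan is to recognise this lemma as the specialisation of Proposition~\ref{prop:proj.symmetries.descend.21type}(ii) to the case $u = 0$. First I would observe that the zero vector field is (trivially) a Killing, hence homothetic, vector field of any $2$-dimensional metric $h$; since $h$ is non-degenerate, the relation $\lie_u h = -Ch$ in \eqref{eqn:Luh.Ch} then forces the constant $C$ to vanish. With $u = 0$ and $C = 0$, the vector field \eqref{eq:raw.degLC.v} becomes precisely $v = \alpha(z)\partial_z$, and equations \eqref{eqn:alpha.prime} and \eqref{eqn:alpha.zeta.prime} take the form $-2\alpha'(z) = b\,\zeta(z)$ and $\alpha(z)\zeta'(z) = -b\,\zeta^2(z) + 2B\,\zeta(z)$. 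The sufficiency (``if'') half of the equivalence in Proposition~\ref{prop:proj.symmetries.descend.21type}(ii) then states exactly that $v$ is a projective vector field of $g = \zeta(z)(h + dz^2)$ as soon as $\alpha$ and $\zeta$ solve this system, which is the assertion of the lemma.

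As an independent verification --- and because Proposition~\ref{prop:gluing}, the other natural route, formally requires $u$ to be nowhere vanishing --- I would also carry out the direct jet-space computation. Substituting $v = \alpha(z)\partial_z$ into the symmetry conditions \eqref{eqn:cond.proj} for the projective connection \eqref{eqn:3.dim.proj.conn} of $g$ and reducing exactly as in the proof of Proposition~\ref{prop:gluing}, the eighteen PDEs collapse to the pair \eqref{eqn:alpha.zeta.ODEs}; here the hypothesis $u \ne 0$ plays no role, since the only nonzero component of $v$ is along $\partial_z$ and the $2$-dimensional block $h$ enters the computation only through $\lie_u h = 0$. By Remark~\ref{rmk:partial.integration.alpha.zeta}, equations \eqref{eqn:alpha.zeta.ODEs} are consequences of \eqref{eqn:alpha.prime} and \eqref{eqn:alpha.zeta.prime}, so this route yields the same conclusion.

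I do not expect any genuine obstacle; the only point needing a little care is the bookkeeping of the constants. Given a pair $(\alpha, \zeta)$ satisfying \eqref{eqn:alpha.prime} and \eqref{eqn:alpha.zeta.prime}, one reads off $b$ and $B = a + d$ from those equations, fixes $a$ and $d$ using in addition $C = 2b\rho + 3a + d = 0$ (cf.\ \eqref{eqn:C}), and determines $c$ from \eqref{eqn:solodovnikov.21.rho}; setting $Z = \zeta + \rho$ and defining $\bar g$ via \eqref{eqn:21.2} then produces a metric projectively equivalent and non-proportional to $g$ on which $v$ acts through the matrix \eqref{eq:A} with precisely these entries. Thus $v$ is an infinitesimal point symmetry of \eqref{eqn:3.dim.proj.conn}, i.e.\ a projective vector field of $g$.
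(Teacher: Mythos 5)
Your proposal is correct and takes essentially the same route as the paper: the paper's own proof likewise observes that $u=0$ forces $C=0$ in \eqref{eqn:Luh.Ch}, so that \eqref{eqn:alpha.prime} and \eqref{eqn:alpha.zeta.prime} reduce to $\alpha'=-\tfrac{b}{2}\,\zeta$ and $\alpha\zeta'=-b\zeta^2+2B\zeta$, which are then equivalent to \eqref{eq:Lvg1}, i.e.\ exactly the sufficiency direction of Proposition~\ref{prop:proj.symmetries.descend.21type}(ii) specialised to $u=0$. Your supplementary jet-space verification and the bookkeeping of the constants $a,b,c,d$ (constructing $\bar g$ via \eqref{eqn:21.2}) are consistent with the paper but not required for the argument.
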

\begin{proof}
	Note that if $u=0$ holds in~\eqref{eq:raw.degLC.v}, 
	then~\eqref{eqn:Luh.Ch} holds with $C=0$.
	Consequently, \eqref{eqn:alpha.prime} and \eqref{eqn:alpha.zeta.prime}
	become
	\[
		\alpha' = -\frac{b}{2}\,\zeta
		\quad\text{and, respectively,}\quad
		\alpha\zeta'=-b\zeta^2+2B\zeta\,.
	\]
	These two equations are thus equivalent to~\eqref{eq:Lvg1} given the
	conditions $u=0$ and $C=0$.
\end{proof}

\subsection{Levi-Civita metrics of type [2-1] with projective symmetries}

We now aim to describe all Levi-Civita metrics of type [2-1] that have
projective symmetries and do not fall into the situation of
Corollary~\ref{cor:constant.Z}, i.e.\ such that $\zeta'(z)\ne0$.

\begin{lemma}\label{la:zero.dimensional.algebra}
	Let $h$ be a $2$-dimensional metric and let $g=\zeta(z)(h+dz^2)$ where
	$\zeta'\ne0$.
	Assume that the vector field $v=\alpha(z)\partial_z$ is a non-zero
	projective vector field of $g$.
	Then one of the following is realised after a coordinate transformation
	of the type~\eqref{eqn:allowed.changes.coord}.
	\begin{enumerate}
		\item\label{item:zero.dimensional.algebra.exp}
		$\zeta(z)=\varepsilon e^{\beta z}$ and $\alpha=k$.
		\item\label{item:zero.dimensional.algebra.1/z2}
		$\zeta(z)=\frac{\beta}{z^2}$ and $\alpha(z)=\frac{k}{z}$
		\item\label{item:zero.dimensional.algebra.tan}
		$\zeta(z)=\beta(1+\tan^2(\xi z))$ and $\alpha=k\,\tan(\xi
		z)$
		\item\label{item:zero.dimensional.algebra.tanh}
		$\zeta(z)=\beta(1-\tanh^2(\xi z))$ and $\alpha=k\,\tanh(\xi
		z)$
	\end{enumerate}
	where $\xi,\beta\ne0$ and $k\in\R$ and $\varepsilon=\pm1$, at least after 
	a
	constant translation of $z$.
\end{lemma}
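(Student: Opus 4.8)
The plan is to exploit the second part of Lemma~\ref{la:zeta.ode}: since $g$ is not of constant curvature, $\zeta'\neq0$, and $v=\alpha(z)\partial_z$ is a nonzero projective vector field, the corresponding matrix $A$ is nonzero by part (1) of that lemma, so $\solpol_A\neq0$ and hence $\zeta(z)$ must satisfy the ODE~\eqref{eqn:zeta.ode}. Here $u=0$, so $\lie_uh=0$ and $C=0$ by definition~\eqref{eqn:C} (more precisely, $C=0$ is forced since $h$ is fixed and $\lie_uh=-Ch$ with $u=0$). Thus~\eqref{eqn:zeta.ode} simplifies to
\[
	\zeta'' = \frac{3b\zeta-4B}{2\zeta\,(b\zeta-2B)}\,{\zeta'}^2\,.
\]
First I would integrate this autonomous-type second-order ODE. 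Writing $p=\zeta'$ and treating $p$ as a function of $\zeta$, the equation becomes $p\,\frac{dp}{d\zeta}=\frac{3b\zeta-4B}{2\zeta(b\zeta-2B)}p^2$, i.e.\ $\frac{d\ln|p|}{d\zeta}=\frac{3b\zeta-4B}{2\zeta(b\zeta-2B)}$. A partial-fraction decomposition of the right-hand side (with the two cases $B=0$ and $B\neq0$, and $b=0$ or $b\neq0$, handled separately) gives $\ln|p|$ as an explicit combination of $\ln|\zeta|$ and $\ln|b\zeta-2B|$, hence $\zeta'=p(\zeta)$ as an explicit algebraic function of $\zeta$; one further quadrature yields $\zeta(z)$.

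The main obstacle will be organizing the case distinction cleanly and matching each branch to the four normal forms in the statement. I expect: the generic case $b\neq0$, $B\neq0$ to produce, after the two quadratures and using the coordinate freedom~\eqref{eqn:allowed.changes.coord} (affine reparametrisations $z\mapsto k_1z+k_2$ together with the shift $\rho\mapsto\rho+t$, $Z\mapsto Z+t$) and rescaling, the trigonometric/hyperbolic forms \ref{item:zero.dimensional.algebra.tan} and \ref{item:zero.dimensional.algebra.tanh} (distinguished by a sign, coming from the sign of the discriminant-like quantity appearing in the partial fractions); the case $B=0$, $b\neq0$ to give $\zeta=\beta/z^2$ as in~\ref{item:zero.dimensional.algebra.1/z2}; and the case $b=0$ (with $B\neq0$, since $b=B=0$ would force $\solpol_A$ to vanish together with the other constraints, contradicting part (1)) to give $\zeta=\varepsilon e^{\beta z}$ as in~\ref{item:zero.dimensional.algebra.exp}. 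In each branch $\alpha(z)$ is then recovered directly from~\eqref{eqn:alpha.zeta.prime} (solvable since $\zeta'\neq0$), which with $C=0$ reads $\alpha\zeta'=-b\zeta^2+2B\zeta$; substituting the explicit $\zeta$ yields $\alpha=k$, $\alpha=k/z$, $\alpha=k\tan(\xi z)$, $\alpha=k\tanh(\xi z)$ respectively, for suitable constants, and one checks consistency with~\eqref{eqn:alpha.prime}.

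Finally I would remark that, conversely, each listed pair $(\zeta,\alpha)$ does satisfy the system~\eqref{eqn:alpha.prime}--\eqref{eqn:alpha.zeta.prime} for appropriate constants $b,B$ (equivalently~\eqref{eqn:alpha.zeta.ODEs} of Proposition~\ref{prop:gluing} with $C=0$), so that $v=\alpha(z)\partial_z$ is genuinely projective; this is the content already guaranteed by the preceding lemma and Proposition~\ref{prop:gluing}, so no new computation is needed beyond substituting the four explicit functions. The only subtlety to flag is degenerate sub-cases where the partial-fraction integration changes form (e.g.\ a double root when $3b\zeta-4B$ and $b\zeta-2B$ become proportional, which cannot happen for $b,B\neq0$, or the boundary $b\zeta-2B\equiv\text{const}$), and to note that the parameter $\xi$ absorbs the integration constant from the first quadrature while $k$ absorbs the remaining freedom, all compatible with the allowed transformations~\eqref{eqn:allowed.changes.coord}.
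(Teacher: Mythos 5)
Your route is essentially the paper's: both proofs integrate the first-order system \eqref{eqn:alpha.prime}--\eqref{eqn:alpha.zeta.prime} with $C=0$ (since $u=0$), the only difference being which unknown is eliminated. The paper eliminates $\zeta$ and solves $\alpha\alpha''=2\alpha'^2+\eta\alpha'$, while you eliminate $\alpha$ and solve the $\zeta$-equation \eqref{eqn:zeta.ode} by reduction of order and quadrature; the case split in $b$ and $B$, the matching of branches to the normal forms, and the recovery of $\alpha$ from \eqref{eqn:alpha.zeta.prime} are the same.

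Three points deserve attention. First, your justification that $\solpol_A\ne0$ is stated backwards: part (1) of Lemma~\ref{la:zeta.ode} gives the implication $\solpol_A=0\Rightarrow A=0$, not $A\ne0$; the correct one-line argument is that $\solpol_A=0$ together with \eqref{eqn:solodovnikov.21.Z} and $\zeta'\ne0$ forces $\alpha=0$, contradicting $v\ne0$. Second, non-constant curvature is not a hypothesis of the lemma, so you cannot literally cite Lemma~\ref{la:zeta.ode}; this is cosmetic, since the same ODE follows directly from \eqref{eqn:alpha.prime}--\eqref{eqn:alpha.zeta.prime} with $C=0$ (cf.\ Remark~\ref{rmk:partial.integration.alpha.zeta}), which is what the paper does. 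Third, your remark that the two hyperbolic/trigonometric cases are ``distinguished by a sign'' glosses over a genuine sub-case: integrating $\zeta'=c\,\zeta\sqrt{|b\zeta-2B|}$ in the hyperbolic regime also produces an inverse-cotangent-type branch, namely $\zeta\propto\coth^2(\xi z)-1$ with $\alpha\propto\coth(\xi z)$, which does satisfy \eqref{eqn:alpha.prime}--\eqref{eqn:alpha.zeta.prime} but is not among the four listed forms and is not reducible to them by $z\mapsto k_1z+k_2$. Notably, the paper's own proof has the same blind spot (it records only the $\tan$ and $\tanh$ solutions of $ff''=2f'^2+f'$ and omits the $\coth$ one), so this is a shared issue rather than a defect specific to your plan; still, a careful write-up following your strategy would have to either include this branch or explain why it is excluded.
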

\begin{proof}
	We integrate~\eqref{eqn:alpha.prime} and~\eqref{eqn:alpha.zeta.prime}
	under the hypotheses of the statement. Since $u=0$ 
	in~\eqref{eq:raw.degLC.v}, we have $C=0$ and thus
	\[
	\alpha'(z) = \mu\zeta\qquad\text{and}\qquad
	\alpha(z)\zeta'(z) = 2\mu\zeta^2(z)+\eta\zeta
	\]
	with $\mu=-\frac{b}{2}$ and $\eta=2B-C$.
	
	\noindent First assume $\mu=0$ (i.e., $b=0$). Then $\alpha(z)=k\ne0$ and
	so $\zeta'=\frac{\eta}{k}\zeta$. We infer $\eta\ne0$ and 	
	$\zeta(z)=e^{\beta z}$ with $\beta=\frac{\eta}{k}$.
	This is case~\ref{item:zero.dimensional.algebra.exp} of the claim.
	Next assume $\mu\ne0$ (i.e., $b\ne0$).
	Then, substituting $\zeta(z)=\mu^{-1}\,\alpha'(z)$,
	\[
	\alpha(z)\alpha''(z)
	= 2\,\alpha'(z)^2+\eta\alpha'(z)\,.
	\]
	If $\eta=0$, we have $\alpha(z)=\frac{k}{z}$ and obtain
	$\zeta\propto\frac1{z^2}$. This is 
	case~\ref{item:zero.dimensional.algebra.1/z2} of the claim.
	If $\eta\ne0$, let $f(z)=\frac{\alpha(z)}{\eta}$ and obtain
	\[
	f(z)f''(z) = 2\,f'(z)^2+f'(z)\,.
	\]
	which has the two solutions
	\[
	f(z)=\frac{\tan\lb\frac{c_1z}{2}\rb}{c_1}
	\qquad\text{and}\qquad
	f(z)=\frac{\tanh\lb\frac{c_1z}{2}\rb}{c_1}\,.
	\]
	These solutions yield the remaining two cases of the claim. We find
	\[
	\zeta\propto 1+\tan(\xi z)^2\,,\quad\alpha\propto\tan(\xi z)
	\qquad\text{or}\qquad
	\zeta\propto 1-\tanh(\xi z)^2\,,\quad\alpha\propto\tanh(\xi z)
	\]
	where $\xi\ne0$.
\end{proof}
Since in Lemma~\ref{la:zero.dimensional.algebra} the metric $h$ is fixed, the
allowed
transformation \eqref{eqn:allowed.changes.coord} are those with $k_1\in\{\mp 
1\}$.

\begin{lemma}\label{la:one.dimensional.algebra}
	Let $g=\zeta(z)(h+dz^2)$ where $\zeta'\ne0$.
	Assume that $h$ admits a homothetic algebra of dimension
	$\leq1$. Assume, too, that the projective algebra of $g$ is at
	least 2-dimensional,  $\dim\projalg(g)\geq2$.
	Then $h$ admits a 1-dimensional algebra of homothetic vector fields, generated by a non-vanishing vector field $u$, and after a
	translation of $z$ we obtain
	\begin{enumerate}
		\item if $\zeta(z)=\varepsilon e^{\beta z}$ and $u$ is Killing, then
		the projective symmetries of $g$ are $v=k_0u+k_1\partial_z$.
		\item if $\zeta(z)=\frac{\beta}{z^2}$ and $u$ is properly homothetic with $\lie_uh=-Ch$, $C\ne0$, then the projective symmetries of $g$ are
		$v = k_0\,\left(u-\frac{C}{2}\,z\partial_z\right)+\tfrac{k_1}{z}\partial_z$.
		\item if $\zeta(z)=\beta(1+\tan^2(\xi z))$ and $u$ is Killing, then the projective symmetries of $g$ are
		$v = k_0u+k_1\,\tan(\xi z)$, $\xi\in\R\setminus\{0\}$.
		\item if $\zeta(z)=\beta(1-\tanh^2(\xi z))$ and $u$ is Killing, then the projective symmetries of $g$ are $v = k_0u+k_1\tanh(\xi z)$, $\xi\in\R\setminus\{0\}$.
	\end{enumerate}
	for constants $\beta\ne0$ and $\varepsilon\in\{\pm 1\}$ and the constant 
	$C$
	with $\lie_uh=-Ch$.
\end{lemma}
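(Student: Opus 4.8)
The plan is to split any projective vector field of $g$ into a ``vertical'' part $\alpha(z)\partial_z$ and a ``horizontal'' part $u_0$ tangent to the $(x,y)$-factor, and to confront the two hypotheses through the structure map $v\mapsto u_0$. First I would observe that $g$ has non-constant curvature: otherwise $g$ would in particular have constant scalar curvature, so by Lemma~\ref{la:Rg.constant.Rh.constant} the metric $h$ would be of constant curvature and hence, being $2$-dimensional, would admit a homothety algebra of dimension $\ge 3$, against the hypothesis. Thus $g$ is a [2-1]-type Levi-Civita metric of non-constant curvature (a partner $\bar g$ is provided by~\eqref{eqn:21}), and Proposition~\ref{prop:proj.symmetries.descend.21type}(i) applies: every projective vector field of $g$ has the form $v=u_0+\alpha(z)\partial_z$ with $u_0=v^1(x,y)\partial_x+v^2(x,y)\partial_y$ homothetic for $h$, $\lie_{u_0}h=-Ch$. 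Since $u_0$ depends only on $(x,y)$ and $\alpha$ only on $z$, one computes $[u_1+\alpha_1\partial_z,\ u_2+\alpha_2\partial_z]=[u_1,u_2]+(\alpha_1\alpha_2'-\alpha_2\alpha_1')\partial_z$, so $\pi\colon\projalg(g)\to\{\text{homothetic vector fields of }h\}$, $v\mapsto u_0$, is a linear (indeed Lie-algebra) homomorphism whose kernel is the space of vertical projective fields $\alpha(z)\partial_z$ of $g$.

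Because $\dim\ker\pi+\dim(\operatorname{im}\pi)=\dim\projalg(g)\ge2$ while $\dim(\operatorname{im}\pi)\le\dim\{\text{homotheties of }h\}\le1$, we obtain $\dim\ker\pi\ge1$; choose a nonzero $v_0=\alpha(z)\partial_z\in\ker\pi$. By Lemma~\ref{la:zero.dimensional.algebra}, after an admissible change of coordinates~\eqref{eqn:allowed.changes.coord} --- here just a translation of $z$, possibly composed with $z\mapsto-z$, since $h$ is fixed --- the function $\zeta$ is one of $\varepsilon e^{\beta z}$, $\beta/z^2$, $\beta(1+\tan^2(\xi z))$, $\beta(1-\tanh^2(\xi z))$, with the corresponding $\alpha$ there spanning $\ker\pi$; in particular $\dim\ker\pi=1$. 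Next, for each of these four $\zeta$'s, I would substitute into the system of Proposition~\ref{prop:gluing} (equivalently \eqref{eqn:alpha.prime}--\eqref{eqn:alpha.zeta.prime}; alternatively, where $\solpol_A\ne0$, into~\eqref{eqn:zeta.ode}) and solve for all admissible pairs $(\alpha,C)$. After eliminating $b$ via~\eqref{eqn:b.Gianni}, the second equation of Proposition~\ref{prop:gluing} becomes a consequence of the first once $\zeta$ has one of the above forms, and the first reduces to an elementary first-order linear ODE for $\alpha$. Comparing coefficients --- for instance, matching the rational function of $\zeta$ that appears in~\eqref{eqn:zeta.ode} --- one finds that for $\zeta\in\{\varepsilon e^{\beta z},\ \beta(1+\tan^2(\xi z)),\ \beta(1-\tanh^2(\xi z))\}$ the system is consistent only if $C=0$ (so the relevant homothety is Killing), with solutions $\alpha=\const$, $\alpha\propto\tan(\xi z)$, $\alpha\propto\tanh(\xi z)$ respectively, whereas for $\zeta=\beta/z^2$ every $C$ is admissible, with general solution $\alpha=\tfrac{k_1}{z}-\tfrac{C}{2}z$.

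Since $\dim\ker\pi=1$ and $\dim\projalg(g)\ge2$, the rank identity forces $\dim(\operatorname{im}\pi)=1$; hence the homothety algebra of $h$ is exactly $1$-dimensional, generated by a vector field $u$ which --- on a neighbourhood of a point where it does not vanish --- we may take nonvanishing, and $\dim\projalg(g)=2$. Writing $\projalg(g)=\langle v_0,\ u+\gamma(z)\partial_z\rangle$, where $\gamma$ is the value of $\alpha$ found above for $C$ the homothety constant of $u$, we obtain the four stated descriptions: $v=k_0u+k_1\partial_z$ for $\zeta=\varepsilon e^{\beta z}$; $v=k_0\bigl(u-\tfrac{C}{2}z\partial_z\bigr)+\tfrac{k_1}{z}\partial_z$ for $\zeta=\beta/z^2$ (the formula also covers $C=0$, reducing to $v=k_0u+\tfrac{k_1}{z}\partial_z$); and $v=k_0u+k_1\tan(\xi z)\partial_z$, resp.\ $v=k_0u+k_1\tanh(\xi z)\partial_z$, in the remaining two cases. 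I expect the real work to be the compatibility bookkeeping in the middle paragraph: verifying, for each of the four $\zeta$'s, that the two equations of Proposition~\ref{prop:gluing} neither overdetermine $\alpha$ (so that a lift of $u$ genuinely exists) nor admit a spurious additional solution; granted that, the integrations and coefficient comparisons are routine.
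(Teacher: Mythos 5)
Your proposal is correct and follows essentially the same route as the paper: force the existence of a purely vertical projective field (the paper does this by taking the difference of two solutions $\alpha_1,\alpha_2$ of the linear system~\eqref{eqn:alpha.zeta.ODEs}, you do it via a rank--nullity argument for the projection $v\mapsto u_0$, which is the same idea), invoke Lemma~\ref{la:zero.dimensional.algebra} to pin down $\zeta$, and then solve \eqref{eqn:alpha.prime}--\eqref{eqn:alpha.zeta.prime} for a lift of $u$, obtaining exactly the paper's compatibility conditions ($C=0$ in the exponential, $\tan$ and $\tanh$ cases, arbitrary $C$ for $\zeta=\beta/z^2$). The only additions are your explicit (and welcome) preliminary check that $g$ has non-constant curvature so that Proposition~\ref{prop:proj.symmetries.descend.21type} applies, and the asserted-but-not-carried-out coefficient computations, whose stated outcomes agree with those in the paper's proof.
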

\begin{proof}
	Let $h$ have no projective symmetries, except for the trivial $u=0$.
	Then $g$ has a non-zero projective vector field only if $\zeta$ and
	$\alpha$ are as in Lemma~\ref{la:zero.dimensional.algebra}.
	In these cases, the projective algebra is 1-dimensional. Otherwise, it is
	0-dimensional.
	Thus let $h$ admit a $1$-dimensional homothetic algebra, generated by
	a vector field $u$ with $\lie_uh=-Ch$.
	We prove the assertion by contradiction. Assume that there is
	$\zeta(z)\ne0$ that admits a higher dimensional algebra of projective
	symmetries. We conclude that there are two independent solutions
	$\alpha_1(z),\alpha_2(z)$, satisfying~\eqref{eqn:alpha.zeta.ODEs} for
	the same $\zeta(z)$ and $\lambda$. From the linearity of
	\eqref{eqn:alpha.zeta.ODEs} w.r.t.~$\alpha$ it follows that the function
	$\alpha(z):=\alpha_2(z)-\alpha_1(z)\ne0$ satisfies
	\begin{align*}
		(\alpha\zeta''-\alpha'\zeta')\,\zeta &= \alpha{\zeta'}^2
		\\
		(-2\zeta\alpha''+\alpha\zeta''+\alpha'\zeta' )\,\zeta
		&= -\alpha{\zeta'}^2
	\end{align*}
	By Remark~\ref{rmk:partial.integration.alpha.zeta} it follows that
	\[
		b\zeta(z)=-2\alpha'(z)\qquad\text{and}\qquad
		\alpha(z)\zeta'(z)=-b\zeta^2(z)+2\eta\zeta(z)\,.
	\]
	This is the system solved in
	Lemma~\ref{la:zero.dimensional.algebra}, and so $\zeta(z)$ has to
	be one of
	the solutions in the list. In order to have a second projective
	vector field, independent of $v=u+\alpha(z)\partial_z$, we need to be able
	to find another projective vector field of the form $\bar
	v=u+\bar\alpha(z)\partial_z$, linearly independent of $v$.
	Indeed, this is not possible for all of the metrics of
	Lemma~\ref{la:zero.dimensional.algebra}, yet for some examples it
	is. In order to	find these metrics $g$, i.e.~the functions $\zeta$,
	such that desired $\bar	v$ exists, we solve
	Equations~\eqref{eqn:alpha.prime} and~\eqref{eqn:alpha.zeta.prime}, which
	in terms of~$\bar\alpha(z)$ read
	\begin{subequations}
	\begin{align}
		-2\bar\alpha' &= C+b\zeta
		\label{eqn:baralpha.prime} \\
		\bar\alpha\zeta' &= -b\zeta^2 + \eta\zeta
		\label{eqn:baralpha.zeta.prime}
	\end{align}
	\end{subequations}
	Note that in these equations $\zeta(z)$ is given explicitly, possibly
	involving parameters. We seek a solution $\bar\alpha$. Since
	$\zeta'(z)\ne0$, we have $\bar\alpha=\frac{-b\zeta^2+\eta\zeta}{\zeta}$
	due to~\eqref{eqn:baralpha.zeta.prime}.
	Resubstituting into Equation~\eqref{eqn:baralpha.prime}, we obtain a
	condition on $C$, $b$ and $\eta$. Explicitly, for the cases of
	Lemma~\ref{la:zero.dimensional.algebra} we find
	\begin{enumerate}
		\item If $\zeta=\varepsilon e^{\beta z}$, we solve 
		\eqref{eqn:baralpha.zeta.prime} for
		$\bar\alpha =	\frac{-b\varepsilon}{\beta} e^{\beta z}
						+\frac{\eta}{\beta}$.
		Next, from \eqref{eqn:baralpha.prime}, we obtain
		$\varepsilon b e^{\beta z}-C=0$.
		We conclude that $C=0$, and thus $u$ is a Killing vector field. Moreover, we conclude $b=0$, i.e.\ $v$ is homothetic.
		Finally, we compute $\bar\alpha = \frac{\eta}{\beta}$.
		\item If $\zeta=\frac{\beta}{z^2}$, we find
		$\bar\alpha=\frac{b\beta}{2z}-2\eta z$ and the condition
		$C=4\eta$. Thus, $\bar\alpha=\frac{b\beta}{2z}-\frac{Cz}{2}$.
		\item If $\zeta=\beta(1+\tan^2(\xi z))$, we find analogously that
		$\bar\alpha
		=-\frac12\,\frac{b\beta\tan(\xi z)^2+b\beta-\eta}{\xi\tan(\xi z)}$
		and the condition
		$\lb b\beta+C-\eta \rb + \frac{ b\beta-\eta }{ \tan(\xi z)^2 } = 0$
		and so $\eta=C+b\beta$ and $\eta=b\beta$, implying $C=0$.
		We conclude
		$\bar\alpha = -\frac12\,\frac{b\beta}{\xi}\,\tan(\xi z)$.
		\item The remaining case follows analogously.
	\end{enumerate}
\end{proof}

We are now able to formulate an explicit description of Levi-Civita metrics
of [2-1] type that admit projective vector fields. We begin with the case
when the projective algebra is 1-dimensional.

\begin{proposition}\label{prop:1D.algebra.21}
	Let $g=\zeta(z)(h+dz^2)$ be a Levi-Civita
	metric~\eqref{eqn:g1.2-1.for.discussion} where
	$\zeta(z)$ is not a constant and~$g$ is not of constant curvature.
	If $g$ admits a projective algebra of dimension exactly $1$, then around
	almost every point and locally up to a change of coordinates $g$ falls
	into one of the following cases.
	\begin{enumerate}[label=(\arabic*)]
		\item
		$g = \zeta(z)\,(h+dz^2)$ where $h$ has an exactly 1-dimensional
		Killing algebra generated by the vector field $u$ and where
		\[
			\zeta(z)\notin\left\{
					\eta e^{\beta(z+z_0)}, \frac{\eta}{(z+z_0)^2},
					\eta(1+\tan^2(kz+z_0)), \eta(1-\tanh^2(kz+z_0)) :
					\eta,\beta,k,z_0\in\R
				\right\}
		\]
		Then $\projalg(g)=\la u\ra$ is Killing.
		\item
		$g = \pm e^{\beta z}\,(h+dz^2)$, $\beta\ne0$, where $h$ has no
		Killing vector field.\\
		Then $\projalg(g)=\la\partial_z\ra$ is homothetic.
		\item
		$g = \frac{\eta}{z^2}\,(h+dz^2)$, $\eta\ne0$, where $h$ has no
		homothetic vector field.\\
		Then $\projalg(g)=\la\frac1z\partial_z\ra$ is essential.
		\item
		$g = \eta(1+\tan^2(z))\,(h+dz^2)$, $\eta\ne0$, where $h$ has no
		Killing vector field.\\
		Then $\projalg(g)=\la\tan(z)\partial_z\ra$ is essential.
		\item
		$g = \eta(1-\tanh^2(z))\,(h+dz^2)$, $\eta\ne0$, where $h$ has no
		Killing vector field.\\
		Then $\projalg(g)=\la\tanh(z)\partial_z\ra$ is essential
	\end{enumerate}		
\end{proposition}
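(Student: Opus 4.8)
The plan is to peel off the projective vector fields of $g$ one at a time using Proposition~\ref{prop:proj.symmetries.descend.21type} together with Lemmas~\ref{la:zero.dimensional.algebra}, \ref{la:one.dimensional.algebra} and~\ref{la:zeta.ode}, organising the discussion around a single generator of the (by hypothesis one-dimensional) algebra $\projalg(g)$. First I would record the standing reductions. By Proposition~\ref{prop:proj.symmetries.descend.21type}(i) every projective vector field of $g$ has the form $v=u+\alpha(z)\partial_z$ with $u$ a homothetic vector field of $h$; write $\lie_uh=-Ch$. If $h$ had constant curvature it would carry a three-dimensional Killing algebra, each member of which is Killing for $g=\zeta(h+dz^2)$ (it has no $\partial_z$-component and $\zeta=\zeta(z)$), forcing $\dim\projalg(g)\ge3$; hence $h$ is of non-constant curvature, so its Killing algebra is at most one-dimensional, and $\zeta$ is non-constant by hypothesis (the constant case being Corollary~\ref{cor:constant.Z}). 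Fix a generator $v_0=u_0+\alpha_0(z)\partial_z$ of $\projalg(g)$.

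Case $\alpha_0\equiv0$. Then $v_0=u_0$, and substituting $\alpha\equiv0$ into Equations~\eqref{eqn:alpha.prime} and~\eqref{eqn:alpha.zeta.prime} of Proposition~\ref{prop:proj.symmetries.descend.21type}(ii) gives $0=C+b\zeta$, hence $b=0$ and $C=0$ because $\zeta$ is non-constant; so $u_0$ is Killing for $h$, therefore for $g$, and $\projalg(g)=\langle u_0\rangle$ by the dimension hypothesis. Moreover $\zeta$ cannot be any of the profiles $\varepsilon e^{\beta(z+z_0)}$, $\eta/(z+z_0)^2$, $\eta(1+\tan^2(kz+z_0))$, $\eta(1-\tanh^2(kz+z_0))$: for each of these the pairs $(\zeta,\alpha)$ produced by Lemma~\ref{la:zero.dimensional.algebra} give, independently of $h$, a vertical projective vector field, which together with $u_0$ would make $\dim\projalg(g)\ge2$. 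Finally $h$ has an exactly one-dimensional Killing algebra, since a two-dimensional one would force constant curvature of $h$ and hence $\dim\projalg(g)\ge3$. This is case~(1).

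Case $\alpha_0\not\equiv0$. I would first exclude $u_0\ne0$. If $u_0$ were Killing for $h$ it would be Killing for $g$ and independent of $v_0$, so $\dim\projalg(g)\ge2$. If $u_0$ were a proper homothety, then $C\ne0$, so $A\ne0$; by Lemma~\ref{la:zeta.ode}(1) this also rules out $\solpol_A=0$, whence Lemma~\ref{la:zeta.ode}(2) places $\zeta$ among the non-constant solutions of the ODE~\eqref{eqn:zeta.ode}, which (see below) are again exactly the four profiles above — each carrying a vertical projective vector field, so again $\dim\projalg(g)\ge2$, impossible. Hence $u_0=0$ and $v_0=\alpha_0(z)\partial_z$, and Lemma~\ref{la:zero.dimensional.algebra} applies: after a transformation of type~\eqref{eqn:allowed.changes.coord} (translating $z$ and absorbing constants into $h$) and rescaling $v_0$, we reach $\zeta\in\{\pm e^{\beta z},\ \eta/z^2,\ \eta(1+\tan^2 z),\ \eta(1-\tanh^2 z)\}$ with $v_0\in\{\partial_z,\ z^{-1}\partial_z,\ \tan(z)\partial_z,\ \tanh(z)\partial_z\}$ respectively. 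The requirement $\dim\projalg(g)=1$ then translates, via Lemma~\ref{la:one.dimensional.algebra}, into the stated hypotheses on $h$: no Killing field on $h$ for the $e^{\beta z}$, $\tan$ and $\tanh$ profiles, and no homothetic field on $h$ for the $1/z^2$ profile (in that last case a proper homothety of $h$ would, by Lemma~\ref{la:one.dimensional.algebra}, give a two-dimensional $\projalg(g)$). That $g$ is automatically of non-constant curvature follows from Lemma~\ref{la:Rg.constant.Rh.constant}, since $h$ — lacking Killing fields — is of non-constant curvature, and a one-line computation of $\lie_{v_0}g$ identifies $v_0$ as properly homothetic when $\zeta=\pm e^{\beta z}$ and as essential in the three remaining profiles. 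These are cases~(2)--(5), which finishes the proof.

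The one genuinely computational point — needed only to rule out a proper-homothety horizontal part in the second case — is the classification of the non-constant solutions of~\eqref{eqn:zeta.ode}: one lowers the order via $\tfrac{d}{d\zeta}\ln\zeta'=\tfrac{3b\zeta+C-4B}{2\zeta(b\zeta+C-2B)}$, integrates by partial fractions and then once more, treats the degenerate parameter ranges $b=0$ and $C-2B=0$ separately, and matches the resulting closed forms using $C=2b\rho+3a+d$, $B=a+d$ and $\solpol_A(\rho)=0$; the upshot is that, modulo~\eqref{eqn:allowed.changes.coord}, the only non-constant solutions are the four profiles of Lemma~\ref{la:zero.dimensional.algebra}. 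Everything else is bookkeeping on top of Propositions~\ref{prop:proj.symmetries.descend.21type} and~\ref{prop:gluing} and the three lemmas cited.
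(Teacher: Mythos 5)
Your reduction to $v=u+\alpha(z)\partial_z$ via Proposition~\ref{prop:proj.symmetries.descend.21type} and the two cases $\alpha_0\equiv0$, $u_0=0$ follow the same route as the paper, whose proof simply invokes Lemmas~\ref{la:zero.dimensional.algebra} and~\ref{la:one.dimensional.algebra}; those parts of your argument are fine. The gap is in the step where you dispose of a generator whose horizontal part is a \emph{proper} homothety of $h$ while $\alpha_0\not\equiv0$. There you claim that the non-constant solutions of \eqref{eqn:zeta.ode} are exactly the four profiles of Lemma~\ref{la:zero.dimensional.algebra}. That classification is false: \eqref{eqn:zeta.ode} depends on the constants $b,B,C$, and for $C\ne0$ it has many more solutions. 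For instance, with $b=0$ it reduces to $\zeta''=\mu\,\zeta'^2/\zeta$ with $\mu=\tfrac{C-4B}{2(C-2B)}$, whose solutions are \emph{all} power laws $\zeta\propto|z+z_0|^{k}$ (and exponentials); the paper's own Lemma~\ref{la:solution.alpha.zeta} and cases~\ref{item:Thm.21.c3}, \ref{item:Thm.21.c6} of Theorem~\ref{thm:2.1} exhibit precisely such $C\ne0$ solutions ($\zeta=|z|^{k}$, $\zeta=\psi'(z)$, e.g.\ $\tanh^2z$), which are not among the four profiles and which carry no vertical projective field. Hence the intended contradiction ``$\dim\projalg(g)\ge2$'' does not materialise, and your exclusion of this sub-case collapses.

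Moreover the sub-case cannot be argued away by a better integration of \eqref{eqn:zeta.ode}, because it actually occurs: take $h$ whose homothetic algebra is exactly one-dimensional and generated by a properly homothetic field $u_0$, $\lie_{u_0}h=-Ch$ with $C\ne0$ and no Killing field (e.g.\ $h=e^{x}(dx^2+G(y)dy^2)$ for generic $G$), and $\zeta=|z|^{k}$ with $k\ne0,-2$. Substituting into \eqref{eqn:alpha.zeta.ODEs} (equivalently \eqref{eqn:alpha.prime}--\eqref{eqn:alpha.zeta.prime}) shows that $v_0=u_0-\tfrac{C}{2}\,z\partial_z$ is projective (in fact properly homothetic) for $g=|z|^{k}(h+dz^2)$, while the same equations admit no vertical solution and no further independent combination $su_0+\alpha(z)\partial_z$; so $\projalg(g)=\la v_0\ra$ is exactly one-dimensional with a generator having a non-Killing horizontal part and non-zero vertical part, and with $\zeta$ outside the four profiles. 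Since the form-preserving changes \eqref{eqn:allowed.changes.coord} are affine in $z$, such a $g$ is not carried to any of the shapes in items (2)--(5) either, so this configuration must be treated head-on (for example by invoking the homothetic normal form \eqref{eqn:homothetic.normal.form}, or by restricting how the statement's case (1) is meant to absorb homothetic generators) rather than excluded by the solution set of \eqref{eqn:zeta.ode}. As written, this is a genuine gap in your proof.
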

\begin{proof}
	The claim follows directly from Lemmas~\ref{la:zero.dimensional.algebra}
	and~\ref{la:one.dimensional.algebra}.
\end{proof}

For Levi-Civita metrics of type [2-1] that admit a projective algebra of
dimension at least~$2$, we then arrive at the following theorem.

\begin{theorem}\label{thm:2.1}
	Let $g=\zeta(z)(h+dz^2)$ be a Levi-Civita
	metric~\eqref{eqn:g1.2-1.for.discussion} where
	$\zeta(z)$ is not a constant and $g$ is not of constant curvature.
	If $g$ admits a projective algebra of dimension at least $2$, then around
	almost every point and locally up to a change of coordinates $g$ falls
	into one of the following cases.
	\begin{enumerate}[label=\alph*)]
		\item\label{item:21a} The metric~$h$ has constant curvature.
		\item\label{item:21b} The homothetic algebra of metric~$h$ is exactly
		of dimension 1. Then there exist local coordinates such that
		\begin{enumerate}[label=(\arabic*)]
			\item
			$g = \varepsilon e^{\beta z}\,(h+dz^2)$, $\beta\ne0$, with
			projective vector fields
			\[ v = k_0\partial_x+k_1\partial_z\,, \] 
			$k_i\in\R$, where $\partial_x$ is a Killing vector field of $h$.
			\item
			$g = \frac{\eta}{z^2}\,(h+dz^2)$ with projective vector fields
			\[ v = k_0\,\left(
					\partial_x-\frac{C}{2}\,z\partial_z
				\right)
				+\frac{k_1}{z}\partial_z\,,
			\]
			$k_i\in\R$, where $\partial_x$ is a properly homothetic vector 
			field of $h$ with $\lie_{\partial_x}h=h$.
			\item
			$g = \eta\,(1+\tan^2(z))\,(h+dz^2)$ with projective	vector fields 
			\[
				v = k_0\partial_x+k_1\tan(z)\partial_z\,,
			\]
			$k_i\in\R$, where $\partial_x$ is a Killing vector field of $h$.
			\item
			$g = \eta\,(1-\tanh^2(z))\,(h+dz^2)$ with projective vector 
			fields
			\[
				v = k_0\partial_x+k_1\tanh(z)\partial_z\,,
			\]
			$k_i\in\R$, where $\partial_x$ is a Killing vector field of $h$.
		\end{enumerate}
		\item\label{item:21c} The homothetic algebra of metric~$h$ is exactly
		of dimension 2. 	
		Locally there exist coordinates such that $h$ can be brought
		into the form
		\begin{equation}\label{eqn:1a-2a}
			h =\varepsilon_1e^{(\beta+2)x} dx^2 + \varepsilon_2e^{\beta x}dy^2
		\end{equation}
		with $\beta\in\R\setminus\{-2,0\}$ and $\varepsilon_i\in\{\pm1\}$.
		Then there exist local coordinates such that $g$ assumes one of the 
		following forms.
		\begin{enumerate}[label=(\arabic*)]
			\item\label{item:Thm.21.c1} 
				For $\eta\in\R,\eta\ne0$, the metric
				$g=\frac{\eta}{z^2}\,(h+dz^2)$,
				has the projective vector fields
				\[
					v = k_0\partial_y
						+ k_1\left(
							2\partial_x
							+2y\partial_y
							+(\beta+2)z\partial_z
						\right)
						+\tfrac{k_2}{z}\partial_z\,,
				\]
				where $k_i\in\R$.
			\item\label{item:Thm.21.c2} 
				The metric $g = \eta e^{z}(h+dz^2)$,
				$\eta\in\R,\eta\ne0$, has the projective vector fields
				\[
					v = k_0\partial_y+k_2\partial_z
				\]
				which are homothetic, with $k_i\in\R$.
			\item\label{item:Thm.21.c3} 
				For $k\ne-2$ the metric $g=\varepsilon\,|z|^{k}\,(h+dz^2)$,
				$\varepsilon\in\{\pm1\}$
				admits the projective vector fields
				\[
					v = k_0\partial_y
						+k_1\left(
							2\partial_x
							+2y\partial_y
							+(\beta+2)z\partial_z
						\right)\,,
				\]
				which are homothetic, with $k_i\in\R$.
			\item\label{item:Thm.21.c4} 
				The metric $g = \eta\,(1+\tan^2(z))\ (h+dz^2)$,
				$\eta\in\R,\eta\ne0$,
				has the projective vector fields
				\[
					v = k_0\partial_y
						+k_2\tan(z)\partial_z\,,
				\]
				with $k_i\in\R$.
			\item\label{item:Thm.21.c5} 
				The metric $g = \eta\,(1-\tanh^2(z))\,(h+dz^2)$,
				$\eta\in\R,\eta\ne0$,
				has the projective vector fields
				\[
					v = k_0\partial_y
						+k_2\tanh(z)\partial_z\,,
				\]
				with $k_i\in\R$.
			\item\label{item:Thm.21.c6} 
				For $k\in\R$, $k\ne-1$, the metric
				$g=\eta\psi'(z)\,(h+dz^2)$,
				$\eta\ne0$,	where
				\[
					(\psi(z)-z)\psi''(z) = 2\psi'(z)\,(\psi'(z)-k),
				\]
				has the projective vector fields
				\[
					v = k_0\partial_y
						+k_1\,(
								2\partial_x
								+2y\partial_y+(\beta+2)(z-\psi(z))\partial_z )
				\]
				with $k_i\in\R$.
		\end{enumerate}
	\end{enumerate}
\end{theorem}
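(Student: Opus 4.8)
The plan is to stratify according to the dimension of the homothety algebra $\mathfrak{hom}(h)$ of the $2$-dimensional metric $h$, and then to combine the splitting--gluing results of Section~\ref{sec:splitting.gluing} with Lemmas~\ref{la:zero.dimensional.algebra}, \ref{la:one.dimensional.algebra} and~\ref{la:zeta.ode}. By Proposition~\ref{prop:proj.symmetries.descend.21type}(i), every projective vector field of $g$ has the form $v=u+\alpha(z)\partial_z$ with $u$ homothetic for $h$, so $v\mapsto u$ is a Lie algebra homomorphism $\projalg(g)\to\mathfrak{hom}(h)$ whose kernel — the projective vector fields $\alpha(z)\partial_z$ — is at most one-dimensional by Lemma~\ref{la:zero.dimensional.algebra}; moreover every Killing field of $h$ is automatically a Killing field of $g=\zeta(z)(h+dz^2)$, since $\zeta$ depends on $z$ only. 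Now either $h$ has constant curvature — this is case (a) — or $h$ has non-constant curvature, in which case $h$ has at most a one-dimensional isometry algebra (and two proper homotheties of $h$ differ by a Killing field), hence $\dim\mathfrak{hom}(h)\le 2$; Lemma~\ref{la:one.dimensional.algebra} then excludes $\dim\mathfrak{hom}(h)=0$ under the hypothesis $\dim\projalg(g)\ge2$, leaving $\dim\mathfrak{hom}(h)\in\{1,2\}$. When $\dim\mathfrak{hom}(h)=1$ the assertion (case (b)) is precisely Lemma~\ref{la:one.dimensional.algebra}: after straightening the generator of $\mathfrak{hom}(h)$ and an admissible change~\eqref{eqn:allowed.changes.coord} in $z$, its four possibilities become b)(1)--(4). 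I work throughout around a point admitting the Levi-Civita normal form and where $\zeta'\ne0$; this is the source of the ``around almost every point, locally'' in the statement (Proposition~\ref{prop:Levi-Civita.normal.forms}).

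The substance is case (c), $\dim\mathfrak{hom}(h)=2$, which I would treat in two steps. Step~1 is the normal form~\eqref{eqn:1a-2a} for $h$. Write $\mathfrak{hom}(h)=\la u_0,u_1\ra$ with $u_0$ Killing and $\lie_{u_1}h=-Ch$, $C\ne0$. As the bracket of a Killing field with a homothety is again Killing, $[u_0,u_1]\in\la u_0\ra$; the abelian alternative $[u_0,u_1]=0$ is ruled out because in suitable coordinates it forces $h$ to be flat, contradicting non-constant curvature. So $[u_0,u_1]$ is a nonzero multiple of $u_0$, and choosing coordinates adapted to this affine Lie algebra — $u_0=\partial_y$, $u_1=2\partial_x+2y\partial_y$, the lower-order part of $u_1$ absorbed by a shift $y\mapsto y+f(x)$ and a reparametrisation of $x$ — and then solving $\lie_{u_0}h=0$ and $\lie_{u_1}h=-Ch$ componentwise (including a diagonalisation $h_{12}\mapsto0$ that is compatible with the form of $u_1$) yields $h=\varepsilon_1e^{(\beta+2)x}dx^2+\varepsilon_2e^{\beta x}dy^2$ with $C=-2(\beta+2)$. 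Finally $\beta=0$ and $\beta=-2$ must be excluded: for $\beta=0$ the metric $h$ is flat (put $w=e^{x}$), and for $\beta=-2$ the field $u_1$ is Killing, so $h$ would carry a two-dimensional Killing algebra and be of constant curvature; in either case we are back in case (a).

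Step~2 identifies which functions $\zeta$ yield $\dim\projalg(g)\ge2$. If a ``lift'' $\ell:=u_1+\alpha_1(z)\partial_z\in\projalg(g)$ exists, fix one; using Proposition~\ref{prop:proj.symmetries.descend.21type} and closure of $\projalg(g)$ under linear combinations one checks $\projalg(g)=\la\partial_y\ra\oplus\la\ell\ra\oplus\ker$, with $\ker$ the ($\le1$-dimensional) space of projective $\alpha(z)\partial_z$. Since $\partial_y\in\projalg(g)$ always, the hypothesis means the lift exists or $\ker\ne0$. The kernel is non-trivial precisely when $\zeta$ is one of the four functions of Lemma~\ref{la:zero.dimensional.algebra} (with the constant there equal to $0$), which after a change~\eqref{eqn:allowed.changes.coord} normalise to $\zeta=\eta e^{z}$, $\zeta=\eta/z^{2}$, $\zeta=\eta(1+\tan^{2}(z))$, $\zeta=\eta(1-\tanh^{2}(z))$. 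A lift of $u_1$ exists iff the system~\eqref{eqn:alpha.prime}--\eqref{eqn:alpha.zeta.prime} with the fixed $C=-2(\beta+2)\ne0$ is solvable for $\alpha_1$, which by Lemma~\ref{la:zeta.ode}(2) forces~\eqref{eqn:zeta.ode} on $\zeta$. For $b=0$ this integrates to $\zeta=\varepsilon|z|^{k}$, and the degenerate value $k=-2$ (equivalently $B=C$) is exactly $\zeta\propto 1/z^{2}$: thus $k\ne-2$ gives c)(3), while $\zeta\propto1/z^{2}$ also has the kernel $\tfrac1z\partial_z$ and gives c)(1). For $b\ne0$, substituting $\zeta=(-2\alpha_1'-C)/b$ turns~\eqref{eqn:alpha.prime}--\eqref{eqn:alpha.zeta.prime} into the autonomous equation $\alpha_1\alpha_1''=(2\alpha_1'+C)(\alpha_1'+B)$; writing $\alpha_1=(\beta+2)(z-\psi(z))$ and $\zeta=\eta\psi'$ this becomes $(\psi-z)\psi''=2\psi'(\psi'-k)$ with $k=1-\tfrac{2B}{C}$, giving c)(6), and the value $k=-1$ (i.e.\ $B=C$) is excluded since it reproduces c)(1). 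What remains are short consistency checks on~\eqref{eqn:alpha.prime}--\eqref{eqn:alpha.zeta.prime} (using $\zeta'\ne0$): that neither an exponential $\zeta$, nor a $\zeta$ of the form $\eta(1\pm\tanh^{2}(z))$ or $\eta(1+\tan^{2}(z))$, admits a $u_1$-lift — so for these the image of $\projalg(g)$ in $\mathfrak{hom}(h)$ is only $\la\partial_y\ra$ and we obtain c)(2), c)(4), c)(5) with $\dim\projalg(g)=2$ — and that $g$ is automatically of non-constant curvature in case (c), because $R_g=R_h/\zeta+\phi(z)$ for some $\phi$, with $R_h$ non-constant on $M_2$.

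I expect the main obstacle to be this bookkeeping in Step~2 of case (c): cleanly separating the generic, implicitly defined family c)(6) from the closed-form exceptional solutions, and pinning down the exclusions $\beta\notin\{0,-2\}$, $k\ne-2$ in c)(3) and $k\ne-1$ in c)(6) so that the six sub-cases are pairwise disjoint and exhaust case (c). Once this is done the dimension counts follow immediately from $\projalg(g)=\la\partial_y\ra\oplus\la\ell\ra\oplus\ker$ (so $\dim\projalg(g)=3$ in c)(1) and $=2$ in the remaining sub-cases), and assembling the final list is routine.
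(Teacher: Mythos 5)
Your proposal is correct and follows the same overall strategy as the paper: reduce to the homothety algebra of $h$ via Proposition~\ref{prop:proj.symmetries.descend.21type}, split into the cases where $h$ has constant curvature, a $1$-dimensional homothety algebra (handled by Lemma~\ref{la:one.dimensional.algebra}), or a $2$-dimensional one, and then integrate~\eqref{eqn:alpha.prime}--\eqref{eqn:alpha.zeta.prime} to pin down $\zeta$ and the extra symmetries, finishing with the disjointness checks ($k\ne-2$, $k\ne-1$, and that the exponential/$\tan$/$\tanh$ conformal factors admit no lift of the proper homothety). Two steps are executed differently, and both of your alternatives are sound. First, for the normal form~\eqref{eqn:1a-2a} the paper invokes the classification of $2$-dimensional metrics with $2$- or $3$-dimensional projective algebra \cite[Theorem~1]{bryant_2008} and checks which of those metrics carry a $2$-dimensional homothety algebra, whereas you derive~\eqref{eqn:1a-2a} directly from the structure of $\mathfrak{hom}(h)$ (the abelian case forces flatness, the non-abelian case integrates to~\eqref{eqn:1a-2a} with $C=-2(\beta+2)$); this makes the step self-contained at the cost of a few elementary verifications you only sketch, e.g.\ that the off-diagonal term can be removed while preserving $u_0,u_1$ — which implicitly uses $h_{22}\ne0$; the null-Killing case $h_{22}=0$ is flat and so falls under case~\ref{item:21a} anyway. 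Second, in case~\ref{item:21c} the paper normalises the pair $(a,b)$ via Lemma~\ref{la:normalising.Lw} and organises the integration by the type (Killing/homothetic/essential) of the additional symmetry (Lemma~\ref{la:solution.alpha.zeta}), while you organise it through the projection $\projalg(g)\to\mathfrak{hom}(h)$, separating the kernel (Lemma~\ref{la:zero.dimensional.algebra}, giving c)(1),(2),(4),(5)) from the lifts of $u_1$ with $C=-2(\beta+2)\ne0$ (giving the power laws c)(1),(3) for $b=0$ and, for $b\ne0$, exactly the $\psi$-equation~\eqref{eqn:psi.ode} with $k=1-\tfrac{2B}{C}$, matching the paper's substitution $\psi=\tfrac{2\alpha}{C}+z$); this reorganisation is equivalent but makes the dimension counts ($3$ in c)(1), $2$ elsewhere) and the role of the final overlap checks more transparent.
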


\noindent In Section~\ref{sec:normal.forms} we are going to give explicit
normal forms also for the case~\ref{item:21a}, which are omitted here for
conciseness.

\subsection{Proof of Theorem \ref{thm:2.1}}\label{sec:normal.forms}

\subsubsection{Proof of cases \ref{item:21a} and \ref{item:21b}}
By hypothesis the metric $g$ has a projective algebra of dimension $\geq2$.
Therefore, due to Proposition~\ref{prop:proj.symmetries.descend.21type} and
keeping in mind the considerations within the proof of
Lemma~\ref{la:one.dimensional.algebra}, we conclude that $h$ has a
homothetic algebra of at least dimension~1.
This leaves us with three distinct situations: If $h$ has a homothetic
algebra of exactly dimension~$1$, it must be one of the metrics listed in
Lemma~\ref{la:one.dimensional.algebra}.
If it is of constant curvature, then it has a homothetic algebra of maximal
dimension~$4$. Two-dimensional metrics with a projective algebra of 
dimension~$2$ or~$3$ are
classified in~\cite[Theorem~1]{bryant_2008}. It is then easy to verify that
only~\eqref{eqn:1a-2a} admits a homothetic algebra of dimension $2\leq n\leq
3$.
This leaves us with the following distinct cases:
\begin{enumerate}[label=\alph*)]
	\item
		If $h$ has constant curvature, then there is a coordinate
		transformation~\eqref{eqn:allowed.changes.coord}, see
		Lemma~\ref{la:normalise.curvature}, such that exactly one of the
		following cases occurs:
		\begin{enumerate}[label=\arabic*)]
			\item
			If $h$ has zero curvature, it is locally flat, $h=dx^2+dy^2$.
			\item
			If $h$ has positive constant curvature, it is locally the sphere,
			$h=dx^2+\sin^2(x)dy^2$.
			\item
			If $h$ has negative constant curvature, it is locally of the form
			$h=dx^2+\sinh^2(x)dy^2$.
		\end{enumerate}
	\item
		If $h$ has a homothetic algebra of dimension~$1$, then after a 
		coordinate transformation it is either of the form
		\[
			h = h_{11}(y)dx^2+2h_{12}(y)dxdy+h_{22}(y)dy^2
		\]
		or of the form
		\[
			h = e^{x}\,\left(
				h_{11}(y)dx^2+2h_{12}(y)dxdy+h_{22}(y)dy^2
			\right)\,.
		\]
		In the first case $\partial_x$ is Killing, in the latter it is 
		properly homothetic with $\lie_{\partial_x}h=h$.
	\item
		If $h$ belongs to neither of the previous cases, then it can be
		brought into the form~\eqref{eqn:1a-2a}.
\end{enumerate}
For each of these cases, we need to integrate~\eqref{eqn:alpha.zeta.ODEs} for
$\zeta(z)$ and $\alpha(z)$, where $C$ is determined by the homothetic vector
fields $u$ of $h$.
In fact, see Remark~\ref{rmk:partial.integration.alpha.zeta}, this task is
equivalent to integrating~\eqref{eqn:alpha.prime}
and~\eqref{eqn:alpha.zeta.prime} given $C$ and for suitable $b,B\in\R$.
The parameter choices can be reduced using Lemma~\ref{la:normalising.Lw}.

Cases \ref{item:21a} and \ref{item:21b} of Theorem~\ref{thm:2.1} are
therefore proven, after a suitable
transformation~\eqref{eqn:allowed.changes.coord} of the normal forms
in case \ref{item:21b}.
It remains to conclude the proof in the case of~\eqref{eqn:1a-2a}.

\subsubsection{Proof of case \ref{item:21c}}\label{sec:proof.21}

Note that~\eqref{eqn:1a-2a} admits the homothetic algebra parametrised by
\begin{equation*}
	u = k_0\partial_y + k_1\,(\partial_x+y\partial_y)\,,\quad
	k_i\in\R\,.
\end{equation*}
To proceed, we make use of Lemma~\ref{la:normalising.Lw}, which allows us
to find all projective symmetries of $g$ by integrating the following cases.
By a direct computation we find
\[
	C = -(\beta+2)k_1\,.
\]

\begin{lemma}\label{la:solution.alpha.zeta}
	Let $g=\zeta(z)\,(h+dz^2)$ as in Theorem~\ref{thm:2.1}. Assume $g$ admits
	a projective algebra larger than the one generated by $\partial_y$.
	\begin{enumerate}
		\item\label{item:solution.21.1}
		If $g$ admits a Killing vector field in addition to $\partial_y$, then after a
		transformation~\eqref{eqn:allowed.changes.coord} and multiplying $g$
		with a constant, we have the metric
		\[
			g= \frac1{z^2}(h+dz^2)\,,
		\]
		which admits also the projective vector
		field~$v=\partial_x+y\partial_y-\frac{\beta+2}{2}\,z\partial_z$.
		\item\label{item:solution.21.2}
		If $g$ admits a properly homothetic vector field in addition to the 
		Killing vector field~$\partial_y$, then, after a
		transformation~\eqref{eqn:allowed.changes.coord} and multiplying~$g$
		with a constant, either we have the metric
		\[
			g= z^k\,(h+dz^2)
		\]
		($k\ne0$), which admits also the projective vector
		field~$v=\partial_x+y\partial_y-\frac{\beta+2}{2}\,z\partial_z$, or 
		we have the metric
		\[
			g= e^{kz}\,(h+dz^2)
		\]
		($k\ne0$), which admits also the projective vector 
		field~$v=\partial_z$.
		\item\label{item:solution.21.3}
		If $g$ admits an essential projective vector field in addition to the 
		Killing vector field~$\partial_y$, then, after a
		transformation~\eqref{eqn:allowed.changes.coord} and multiplying $g$
		with a constant, one of the following is attained
		\begin{itemize}
			\item
			$g=\frac1{z^2}\,(h+dz^2)$ with projective vector field
			$v=\frac1z\,\partial_z$.
			\item
			$g=(1+\tan^2(z))\,(h+dz^2)$ with projective vector
			field $v=\tan(z)\,\partial_z$.
			\item
			$g=(1-\tanh^2(z))\,(h+dz^2)$ with projective vector
			field $v=\tanh(z)\,\partial_z$.
			\item
			$g=\psi'(z)\,(h+dz^2)$ where
			\begin{equation}\label{eqn:psi.ode}
				(\psi(z)-z)\,\psi''(z) = 2\psi'(z)\,(\psi'(z)-k)\,,
				\qquad k\in\R\,,
			\end{equation}
			and with the essential projective vector field
			$v=\partial_x+y\partial_y+\frac12\,(z-\psi(z))\,\partial_z$.
		\end{itemize}
	\end{enumerate}
\end{lemma}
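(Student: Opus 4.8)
The plan is to combine the descent result of Proposition~\ref{prop:proj.symmetries.descend.21type} with the classification of the one-dimensional factor carried out in Lemmas~\ref{la:zero.dimensional.algebra} and~\ref{la:one.dimensional.algebra} and with the master equation~\eqref{eqn:zeta.ode} for $\zeta$. Since $h$ in~\eqref{eqn:1a-2a} is not of constant curvature, neither is $g$ (Lemma~\ref{la:Rg.constant.Rh.constant}), so by Proposition~\ref{prop:proj.symmetries.descend.21type}(i) every projective vector field of $g$ has the form $v=u+\alpha(z)\partial_z$ with $u$ homothetic for $h$; as the homothety algebra of~\eqref{eqn:1a-2a} is $\langle\partial_y,\ \partial_x+y\partial_y\rangle$, one may take $u=k_0\partial_y+k_1(\partial_x+y\partial_y)$, and then $C=-(\beta+2)k_1$ by the computation preceding the lemma. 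The pair $(\zeta,\alpha)$ obeys~\eqref{eqn:alpha.prime}--\eqref{eqn:alpha.zeta.prime} (equivalently~\eqref{eqn:alpha.zeta.ODEs}), and whenever $\solpol_A\neq0$ it obeys~\eqref{eqn:zeta.ode}. Throughout, the admissible transformations~\eqref{eqn:allowed.changes.coord}, together with rescaling $v$ and rescaling $g$ (hence $\zeta$ and $h$) by constants, are used to reach the stated normal forms; rescaling $h$ by a constant is harmless since a constant multiple of~\eqref{eqn:1a-2a} is again of that form after an affine change of $(x,y)$.

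First I would dispatch the three easy cases, splitting according to the type of a projective vector field $v$ chosen complementary to $\partial_y$ and normalising the entries of $A$ via Lemma~\ref{la:normalising.Lw}. If $v$ is Killing for $g$, then $a=b=0$, hence $B=C=d$; \eqref{eqn:alpha.prime} gives $\alpha'=-C/2$, the case $C=0$ forces $\alpha=0$ (by~\eqref{eqn:alpha.zeta.prime} and $\zeta'\neq0$) so gives nothing new, and for $C\neq0$, after a translation of $z$, $\alpha=-\tfrac{C}{2}z$ and~\eqref{eqn:alpha.zeta.prime} integrates to $\zeta\propto z^{-2}$, which is case~\ref{item:solution.21.1}. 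If $v$ is properly homothetic, normalise $a=1,b=0$, so $B=d+1$, $C=d+3$; again $\alpha'=-C/2$ and~\eqref{eqn:alpha.zeta.prime} reads $\alpha\zeta'=(C-4)\zeta$, yielding $\zeta\propto e^{kz}$ ($k\neq0$) when $C=0$ and $\zeta\propto z^{k}$ with $k=-2(C-4)/C$ when $C\neq0$; here $k=0$ is impossible because it forces $C=4$, hence $a=d$ and, via~\eqref{eqn:solodovnikov.21.rho}, $\solpol_A=c=0$, contradicting $A\neq0$ by Lemma~\ref{la:zeta.ode}, and this is case~\ref{item:solution.21.2}. If $v$ is essential with $k_1=0$, then $v-k_0\partial_y=\alpha(z)\partial_z$ is a non-zero essential projective vector field of pure form, so Lemma~\ref{la:zero.dimensional.algebra} applies; discarding the exponential solution (which is homothetic, since $b=0$ there) leaves $\zeta\in\{\beta z^{-2},\,\beta(1+\tan^2\xi z),\,\beta(1-\tanh^2\xi z)\}$ with the matching $\alpha$, and rescaling $z$ and $g$ gives the first three bullets of case~\ref{item:solution.21.3}.

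The hard part is the essential case with $k_1\neq0$. Here I would normalise $u=\partial_x+y\partial_y$, so $C=-(\beta+2)\neq0$, and use that essentiality gives $\solpol_A\neq0$, hence $\zeta$ solves~\eqref{eqn:zeta.ode} with the parameter $B$ still free. Subtracting the two equations in~\eqref{eqn:alpha.zeta.ODEs} yields $2\zeta\alpha''=(2\alpha'+C)\zeta'$; on the branch $2\alpha'+C\equiv0$ one gets $\alpha=-\tfrac{C}{2}z$ and $\zeta\propto z$, whence a direct computation gives $\lie_v g=-\tfrac{3C}{2}g$, so $v$ is homothetic, contradicting essentiality; thus $b:=-(2\alpha'+C)/\zeta$ is a non-zero constant. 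Then I would set $\psi(z):=z+2\zeta(\zeta-k)/\zeta'$ with $k:=1-2B/C$ and check, using~\eqref{eqn:zeta.ode}, that $\psi'=\zeta$ and $(\psi-z)\psi''=2\psi'(\psi'-k)$; this exhibits $g$, up to a constant factor, as $\psi'(h+dz^2)$ with the stated essential vector field, and $k=-1$ is excluded because it integrates to $\zeta\propto z^{-2}$, already in the list. Apart from this $\psi$-substitution, the only delicate points are the normalisation bookkeeping above and verifying, by computing $\lie_v g$ directly in each item, that the exhibited $v$ is Killing, properly homothetic, or essential as claimed.
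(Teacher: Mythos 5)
Your overall route is the paper's: normalise $(a,b)$ via Lemma~\ref{la:normalising.Lw} according to whether the extra vector field is Killing, properly homothetic or essential, and integrate \eqref{eqn:alpha.prime}--\eqref{eqn:alpha.zeta.prime}; parts \ref{item:solution.21.1}, \ref{item:solution.21.2} and the $k_1=0$ subcase of \ref{item:solution.21.3} (handled in the paper by direct integration of \eqref{eqn:alpha.ode.essential} with $C=0$, in your text by citing Lemma~\ref{la:zero.dimensional.algebra}) are fine. The problems are in the essential case with $C\neq0$. First, on the branch $2\alpha'+C\equiv0$ your claim ``$\zeta\propto z$'' is not what the equations give: with $\alpha=-\tfrac{C}{2}z$ both equations of \eqref{eqn:alpha.zeta.ODEs} reduce to $z\zeta\zeta''+\zeta\zeta'-z(\zeta')^2=0$, whose solutions are $\zeta\propto z^{c}$ for an arbitrary exponent $c$. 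The conclusion you want (this branch contradicts essentiality) is still true, but for the simpler reason that $2\alpha'+C\equiv0$ means $b=0$ in \eqref{eqn:alpha.prime}, whence \eqref{eq:Lvg1} gives $\lie_vg=-4a\,g$, i.e.\ $v$ homothetic; your ``direct computation'' as written only treats a special member of the branch.

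Second, and more seriously, the key substitution does not check out as stated. Defining $\psi(z)=z+2\zeta(\zeta-k)/\zeta'$ with $k=1-2B/C$ and differentiating, then eliminating $\zeta''$ via \eqref{eqn:zeta.ode}, the identity $\psi'=\zeta$ is equivalent to $(b+C)(C-B)=0$; generically it fails. Indeed, by \eqref{eqn:alpha.zeta.prime} your $\psi$ coincides with $\tfrac{2\alpha}{C}+z$ (the paper's substitution, which does turn \eqref{eqn:alpha.ode.essential} into \eqref{eqn:psi.ode} with $k=1-\tfrac{2B}{C}$) only when $b=-C$, and one has $\bigl(\tfrac{2\alpha}{C}+z\bigr)'=-\tfrac{b}{C}\,\zeta$ by \eqref{eqn:alpha.prime}. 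The ratio $b/C$ is not changed by rescaling $v$ or by the coordinate changes \eqref{eqn:allowed.changes.coord}, so the normalisation $b=-C$ that your formula silently assumes is not among the operations you list; it requires either re-choosing the basis element $\bar\sigma$ (equivalently $\bar g$), or simply working with $\psi=\tfrac{2\alpha}{C}+z$ and absorbing the resulting constant $-b/C$ into the allowed multiplication of $g$ by a constant. Without one of these repairs the final verification step of your hard case fails.
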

\begin{proof}
	We begin with part~\ref{item:solution.21.1} of the claim and compute 
	Killing vector fields $v$ for $g$ by setting~$a=0,b=0$	(and thus 
	$C=B=d$) 
	in \eqref{eqn:alpha.prime}	and~\eqref{eqn:alpha.zeta.prime}. We hence 
	need to solve
	\begin{align*}
		-2\alpha'(z) &= C \\
		\alpha(z)\zeta'(z) &= C\zeta(z)\,.
	\end{align*}
	We begin by assuming $C\ne0$. After a suitable 	
	translation~\eqref{eqn:allowed.changes.coord}, we arrive at the first 
	case.
	If $C=0$, on the other hand, we immediately obtain $\alpha(z)=0$, for any
	non-constant function~$\zeta$. We can omit this solution because its
	projective algebra is spanned by $\partial_y$ and thus 1-dimensional.
	
	We continue with part~\ref{item:solution.21.2} of the claim.
	Properly homothetic vector fields $v$ are obtained assuming $a=1,b=0$
	($C=3+d,B=1+d=C-2$). We need to integrate
	\begin{align*}
		-2\alpha'(z) &= C \\
		\alpha(z)\zeta'(z) &= (C-4)\zeta(z)\,.
	\end{align*}
	If $C\ne0$, then after a suitable
	transformation~\eqref{eqn:allowed.changes.coord},
	\begin{equation*}
		\alpha(z) = -\frac{C}{2}\,z\,,\qquad
		\zeta(z) = C_1 z^{2\lb \frac4C-1\rb}
	\end{equation*}
	where $C_1\in\R$.
	If $C=0$, we find the solution
	\[
		\alpha(z)=C_0\ne0\qquad\text{for}\qquad
		\zeta(z)=C_1\,e^{-\frac4{C_0}z}\,.
	\]
	
	Finally, we consider part~\ref{item:solution.21.3} of the claim.
	Essential vector fields $v$ are obtained assuming $a=0,b=1$ ($C=2\rho+d$, 
	$B=d$). We need to integrate
	\begin{align*}
		-2\alpha'(z) &= C+\zeta(z) \\
		\alpha(z)\zeta'(z) &= -\zeta^2(z)+(2B-C)\zeta(z)
	\end{align*}
	We solve the first equation for $\zeta$ and substitute the result
	into the second condition, obtaining
	\begin{equation}\label{eqn:alpha.ode.essential}
		\zeta=-2\alpha'-C\qquad\text{where}\quad
		-2\alpha\alpha''
		= -(2\alpha'+C)^2-(2B-C)(2\alpha'+C)
	\end{equation}
	We begin with the case $C=0$.
	The ODE becomes
	\[
	\alpha\alpha''=2(\alpha')^2+2B\alpha'
	\]
	If $B=0$, we have $\alpha=\frac{C_2}{C_1+z}$ and thus $\zeta=\frac1{z^2}$
	up to a constant translation of $z$ and a multiplication of~$\zeta$ with
	a constant.
	Thus assume $B\ne0$.
	Introducing $\psi=\tfrac{\alpha}{2B}$ the ODE translates into
	\[
		\psi(z)\psi''(z) = 2\psi'(z)^2+\psi'(z).
	\]
	with the solutions
	\[
		\psi(z)=\frac{1}{C_1}\tan\lb\frac{C_1z}{2}\rb
		\quad\text{and}\quad
		\psi(z)=\frac{1}{C_1}\tanh\lb\frac{C_1z}{2}\rb\,.
	\]
	We thus have
	\[
		\alpha(z) = \frac{2B}{C_1}\tan(\tfrac{C_1z}{2})
	\]
	and, up to multiplication by a constant factor,
	\[
		\zeta(z) = (1+\tan^2(C_1z))\quad\text{or}\quad
		\zeta(z) = (1-\tanh^2(C_1z))\,,
	\]
	from which the claim is easily obtained.

	Finally, consider $C\ne0$. In this case it is helpful to introduce
	\begin{equation*}
		\psi(z) = \frac{2\alpha}{C}+z\,,
	\end{equation*}
	which turns~\eqref{eqn:alpha.ode.essential} into the
	ODE~\eqref{eqn:psi.ode} with $k=1-\frac{2B}{C}$.
\end{proof}

For the cases~\ref{item:Thm.21.c1} to~\ref{item:Thm.21.c5} of the claim in 
part~\ref{item:21c} of Theorem~\ref{thm:2.1}, the proof is a direct 
consequence of Lemma~\ref{la:solution.alpha.zeta}.
Case~\ref{item:Thm.21.c6} of part~\ref{item:21c} is also a direct consequence 
of Lemma~\ref{la:solution.alpha.zeta}, but we need to exclude all the 
solutions that are equivalent, under~\eqref{eqn:allowed.changes.coord}, to 
one of the previous cases.
We can check this by assuming that there is, for some $k\in\R$, a solution
$\psi$ of~\eqref{eqn:psi.ode}, such that the metric has the form
$\psi'(z)\,(h+dz^2)$, with $h$ as in~\eqref{eqn:1a-2a}.
For the metric in case~\ref{item:Thm.21.c1} we have $\psi'(z)=\frac{1}{z^2}$ 
(note that the constant conformal factor $\eta$ is irrelevant).
We thus have $\psi(z)=\frac1z+s$ for some constant $s\in\R$. Resubstituting
into~\eqref{eqn:psi.ode} yields the condition
\[
	(k+1)z-s = 0\,,
\]
which can only be realised if $k=-1$ and $s=0$. On the other hand, if $k=-1$,
the ODE~\eqref{eqn:psi.ode} has the solution
\[
	\psi(z) = -\frac{k_0+k_1z}{k_1+z}\,,
\]
for some $k_0,k_1\in\R$, from which we obtain
\[
	\psi'(z) = \frac{k_0-k_1^2}{(k_1+z)^2}\,.
\]
This corresponds to the metric ($\mu\in\R,\mu\ne0$)
\[
	g = \eta\psi'(z)\,(h+dz^2) = \frac{\mu}{(k_1+z)^2}
\]
and after a transformation~\eqref{eqn:allowed.changes.coord} we obtain the
metric from case~\ref{item:Thm.21.c1} of Theorem~\ref{thm:2.1}. We therefore 
exclude the value $k=-1$ from case~\ref{item:Thm.21.c6}.

It is easily confirmed that none of the cases~\ref{item:Thm.21.c2} 
to~\ref{item:Thm.21.c5} can be realised as a special case 
of~\ref{item:Thm.21.c6}. For example, if $\psi'(z)=e^z$, then $\psi(z)=e^z+s$ 
for some $s\in\R$. Resubstituting into~\eqref{eqn:psi.ode}, we have
\[
	(s+2k-z)e^z-e^{2z} = 0\,,
\]
which cannot be realised for any $s,k$ with generic $z$.
The cases~\ref{item:Thm.21.c3} to~\ref{item:Thm.21.c5} are checked 
analogously.

\subsubsection{Solutions of Equation~\eqref{eqn:psi.ode}}

The last case of Theorem~\ref{thm:2.1} is given implicitly through solutions
of an ODE for a function $\psi(z)$, from which both the metric and the
projective symmetries are obtained.
It is however possible to obtain solutions in more concrete form using the
system~\eqref{eqn:zeta.ode}.
Consider case~\ref{item:Thm.21.c6} of Theorem~\ref{thm:2.1}. A closer 
inspection of the proof,
and a comparison with Lemma~\ref{la:zeta.ode}, shows that instead of
solving~\eqref{eqn:psi.ode}, we can also solve~\eqref{eqn:zeta.ode}. Since we
need to determine $\zeta(z)$ only up to a constant factor, we can
equivalently solve
\[
	\zeta''(z)
	= \frac{3\zeta(z)+2k-1}{2\zeta(z)\,(\zeta(z)+k)}\,\zeta'(z)^2
\]
We claim that $\zeta''(z)\ne0$. Indeed, if $\zeta(z)=c_0z+c_1$, then
\[
	0 = \frac{3c_0z+3c_1+2k-1}{2c_0z+3c_1\,(c_0z+c_1+k)}\,c_0^2\,,
\]
implying $c_0=0$, which contradicts the hypothesis of Theorem~\ref{thm:2.1}.
We conclude that $\zeta''(z)\ne0$.

\begin{example}
	Let us now consider the special case when $k=0$. Since $\zeta''(z)\ne0$,
	we
	introduce $f(\zeta(z))=\zeta'(z)^2$. Thus we need to solve
	\[
		f'(\zeta)
		= \frac{3\zeta-1}{\zeta^2}\,f(\zeta)\,.
	\]
	We obtain
	\[
		\zeta'' = f(\zeta) = c_2 f(\zeta)^3 e^{\frac{1}{f(\zeta)}}\,.
	\]
	This equation can be solved and we obtain, up to a
	transformation~\eqref{eqn:allowed.changes.coord} and up to a constant
	factor,
	\[
		\zeta(z) = \frac{1}{\mathrm{inverf}(z)}\,,
	\]
	where $\mathrm{inverf}$ is the inverse error function.
\end{example}

The cases with $k\ne0$ are generally less easy to solve.
Introducing again $f(\zeta(z))=\zeta'(z)^2$, we arrive at
\[
	f'(\zeta)
	= \frac{3\zeta+2k-1}{\zeta\,(\zeta+k)}\,f(\zeta)\,.
\]
The computer algebra software \emph{Wolfram Mathematica 12.3}
\cite{Mathematica} yields the
solution
\[
	f(\zeta) = c_3 \zeta^{\frac{2k+1}{k}}
					(\zeta-k)^{\frac{k-1}{k}}\,.
\]
Solving $f(\zeta(z))=\zeta''(z)^2$, the software finds
\[
	\zeta(z) = F^{-1}(z)\,,
\]
up to a transformation~\eqref{eqn:allowed.changes.coord} and up to a constant
factor, where $F^{-1}$ denotes the inverse of the function
\[
	F : t \mapsto
	-\frac{
			2k\,\,
			t^{-\frac1{2k}}
				\left(t-k\right)^{\frac1{2k}}
				\left(\frac{k-t}{k}\right)^{\frac{k-1}{2k}}
			\,\,{}_2F_1\left[
				\frac{k-1}{2k},
				-\frac{1}{2k},
				\frac{2k-1}{2k},
				\frac{t}{k}
			\right]
		}{
			\sqrt{k-t}
		}
\]
for the hypergeometric special function ${}_2F_1$.
	
\begin{example}
Let us consider the special case when $k=1$.
The solution to \eqref{eqn:psi.ode} is
\[
	\psi(z)=z-\frac{\tanh( k_0 +k_1z) }{k_1}\,,\,\,k_1\neq
	0\quad\text{or}\quad \psi(z)=z\,,
\]
so that
\[
	\psi'(z) = \tanh^2(k_0 +k_1z)\,,\,\,k_1\neq 0\quad\text{or}\quad
	\psi'(z) = 1\,.
\]
The latter solution is not allowed by the hypotheses of Theorem~\ref{thm:2.1}.
Up to a transformation~\eqref{eqn:allowed.changes.coord}, and up to
multiplication of the metric by a constant factor, this yields
\[
	g = \tanh^2(z)\,(h+dz^2)\,,
\]
where $h$ is given by~\eqref{eqn:1a-2a}. Its projective algebra is indeed
parametrised by
\[
	v = k_0\partial_y
		+ k_1\,(2\partial_x+2y\partial_y+(\beta+2)\tanh(z)\partial_z)\,,
\]
which is an essential projective vector field for any $k_1\ne0$.
\end{example}

\subsection{[2-1]-type Levi-Civita metrics with constant curvature metric
\texorpdfstring{$h$}{h}}\label{sec:cc.h}

In this section we consider case a) of Theorem \ref{thm:2.1}. For brevity we
shall consider only the case when~$h$ has Riemannian signature as for the
other signatures the corresponding results can be obtained in complete
analogy.

\subsubsection{[2-1]-type Levi-Civita metrics with flat metric
\texorpdfstring{$h$}{h}}\label{sec:flat.h}

For for any $\zeta\ne0$, the metric
\begin{equation}\label{eqn:g.zeta.eucl}
g=\zeta(z)(dx^2+dy^2+dz^2)
\end{equation} admits
the Killing vector fields
\[
	v = k_0(y\partial_x-x\partial_y)
		+k_1\partial_x
		+k_2\partial_y\,,
\]
$k_i\in\R$, which straightforwardly arise from Killing vector fields of $h$
for any $\zeta(z)\ne0$. The metric $h$ also admits the properly homothetic
vector field
\[
	u = k_3\,(x\partial_x+y\partial_y)\,.
\]
If $g$ admits a larger projective algebra, then $g$ assumes at least one of
the following forms after a suitable coordinate transformation.
\begin{enumerate}
	\item
	For $k(k-2)\ne0$, the metric
	$g = \varepsilon |z|^{-k}\ (dx^2+dy^2+dz^2)$, $\varepsilon\in\{\pm1\}$,
	has the projective vector fields
	\[
		v = k_0(y\partial_x-x\partial_y)
			+k_1\partial_x
			+k_2\partial_y
			+k_3(x\partial_x+y\partial_y+z\partial_z)
	\]
	\item
	The metric $g = \varepsilon e^{z}(dx^2+dy^2+dz^2)$,
	$\varepsilon\in\{\pm1\}$,	has the projective vector fields
	\[
		v = k_0(y\partial_x-x\partial_y)
			+k_1\partial_x
			+k_2\partial_y
			+k_4\partial_z\,.
	\]
	\item 
	The metric $g = \eta(1+\tan^2(z))(dx^2+dy^2+dz^2)$, $\eta\ne0$,
	has the projective vector fields
	\[
		v = k_0(y\partial_x-x\partial_y)
			+ k_1\partial_x
			+ k_2\partial_y
			+ k_4\tan(z)\partial_z\,.
	\]
	\item 
	The metric $g = \eta(1-\tanh^2(z))(dx^2+dy^2+dz^2)$,
	$\eta\ne0$,
	has the projective vector fields
	\[
		v = k_0(y\partial_x-x\partial_y)
			+ k_1\partial_x
			+ k_2\partial_y
			+ k_4\tanh(z)\partial_z\,.
	\]
	\item 
	For $k\in\R$, $k\ne-1$, the metric
	$g=\eta\psi'(z)\,(dx^2+dy^2+dz^2)$,
	$\eta\ne0$,	where
	\[
		(\psi(z)-z)\psi''(z) = 2\psi'(z)\,(\psi'-k)\,,
	\]
	has the projective vector fields
	\[
		v = k_0(y\partial_x-x\partial_y)
			+k_1\partial_x
			+k_2\partial_y
			+k_3(x\partial_x+y\partial_y+(z-\psi(z))\partial_z)
	\]
\end{enumerate}

\begin{remark}
The metric \eqref{eqn:g.zeta.eucl} is of constant curvature if and only if $\zeta(z)=\mp \frac{1}{(c_0+c_1z)^2}$ with $c_i\in\R$ such that $c_0^2+c_1^2\neq 0$. There are no metrics of type (5) with constant curvature.
\end{remark}
In order to obtain the above list, we proceed analogously to the proof of
Theorem \ref{thm:2.1}.
Generically the metric $h=dx^2+dy^2$ admits the homothetic vector fields
\[
	u = k_0(y\partial_x-x\partial_y)
		+ k_1\partial_x
		+ k_2\partial_y
		+ k_3(x\partial_x+y\partial_y)\,,
\]
and we easily compute
\[
	C = -2k_3\,.
\]
The admissible functions $\zeta(z)$, i.e.\ the functions that lead to new
projective vector fields, can be found analogously to
Lemma~\ref{la:solution.alpha.zeta}.

First, note that the metrics $g$ with $\zeta\propto z^{\mu}$ are of
constant curvature if and only if $\mu(\mu+2)(\mu-4)=0$, and thus
$\zeta\propto\frac1{z^2}$ as well as $\zeta\propto z^4$ must be omitted. The
remaining metrics are of non-constant curvature. Hence we obtain the first
case in the list, after a suitable rescaling of the coordinates.
Similarly, the metrics with $\zeta\propto e^{-kz}$ are of constant curvature
if and only if $k=0$ and thus we find the second case in the list after a
suitable affine transformation of the coordinates.
Next, the metrics $g$ with $\zeta\propto e^{-\tfrac{\mu}{z}}$ do not
admit additional projective symmetries and can therefore be omitted from the
list.
For $\zeta\propto(1+\tan^2(C_1z))$, we obtain metrics of
non-constant curvature if and only if $C_1\ne0$, from which we infer the
third case of the list, after a suitable rescaling of the coordinates. The
last case follows analogously.

\subsubsection{[2-1]-type Levi-Civita metrics with spherical metric
\texorpdfstring{$h$}{h}}\label{sec:sphere.h}

With exactly the same strategy as in Section~\ref{sec:flat.h} we can also
consider the case when $h$ has positive constant curvature. According to
Lemma~\ref{la:normalise.curvature}, we can restrict to curvature $+1$.

For $\zeta\ne0$, we consider the metric
\begin{equation}\label{eqn:g.zeta.sphere}
g=\zeta(z)\,(dx^2+\sin^2(x)dy^2+dz^2)\,,
\end{equation}
which admits the linearly independent Killing vector fields
\begin{align*}
	u_0 &= \partial_y \\
	u_1 &= \cos(y)\partial_x-\frac{\sin(y)}{\tan(x)}\partial_y \\
	u_2 &= \sin(y)\partial_x+\frac{\cos(y)}{\tan(x)}\partial_y\,,
\end{align*}
that straightforwardly arise from the Killing vector fields of $h$.
If $g$ admits a larger projective algebra, then $g$ assumes at least one of
the following forms after a suitable coordinate transformation.
\begin{enumerate}
	\item
	The metric
	$g = \frac{\eta}{z^2}\,(dx^2+\sin^2(x)dy^2+dz^2)$, $\eta\ne0$,
	has the projective vector fields
	\[
		v = k_0\,u_0+k_1\,u_1+k_2\,u_2
			+\frac{k_3}{z}\partial_z
	\]
	\item
	The metric $g = \varepsilon e^{z}(dx^2+\sin^2(x)dy^2+dz^2)$,
	$\varepsilon\in\{\pm1\}$,	has the projective vector fields
	\[
		v = k_0\,u_0+k_1\,u_1+k_2\,u_2
			+k_3\partial_z\,.
	\]
	\item 
	The metric $g = \eta(1+\tan^2(z))(dx^2+\sin^2(x)dy^2+dz^2)$, $\eta\ne0$,
	has the projective vector fields
	\[
		v = k_0\,u_0+k_1\,u_1+k_2\,u_2
			+k_3\tan(z)\partial_z\,.
	\]
	\item 
	The metric $g = \eta(1-\tanh^2(z))(dx^2+\sin^2(x)dy^2+dz^2)$,
	$\eta\ne0$,
	has the projective vector fields
	\[
		v = k_0\,u_0+k_1\,u_1+k_2\,u_2
			+k_3\tanh(z)\partial_z\,.
	\]
\end{enumerate}
\begin{remark}
The metric \eqref{eqn:g.zeta.sphere} is of constant curvature if and only if $\zeta(z)=\mp \frac{e^{-2z}}{(c_0+c_1e^{-2z})^2}$,  with $c_i\in\R$ such that $c_0^2+c_1^2\neq 0$.
\end{remark}

\subsubsection{[2-1]-type Levi-Civita metrics with hyperbolic metric
\texorpdfstring{$h$}{h}}\label{sec:hyperbolic.h}

We conclude this section with the case when $h$ has negative constant
curvature, i.e., according to Lemma~\ref{la:normalise.curvature}, curvature
$-1$. The strategy is the same as in the previous two cases.
For $\zeta\ne0$, we consider the metric
\begin{equation}\label{eqn:g.zeta.hyperb}
g=\zeta(z)\,(dx^2+\sinh^2(x)dy^2+dz^2)\,,
\end{equation}
which admits the Killing vector fields
\begin{align*}
	u_0 &= \partial_y \\
	u_1 &= \cos(y)\partial_x-\frac{\sin(y)}{\tanh(x)}\partial_y \\
	u_2 &= \sin(y)\partial_x+\frac{\cos(y)}{\tanh(x)}\partial_y
\end{align*}
that straightforwardly arise from the Killing vector fields of $h$.
If $g$ admits a larger projective algebra, then $g$ assumes at least one of
the following forms after a suitable coordinate transformation.
\begin{enumerate}
	\item
	The metric
	$g = \frac{\eta}{z^2}\,(dx^2+\sin^2(x)dy^2+dz^2)$, $\eta\ne0$,
	has the projective symmetries
	\[
		v = k_0\,u_0+k_1\,u_1+k_2\,u_2
			+\frac{k_3}{z}\partial_z
	\]
	\item
	The metric $g = \varepsilon e^{z}(dx^2+\sinh^2(x)dy^2+dz^2)$,
	$\varepsilon\in\{\pm1\}$,	has the projective symmetries
	\[
		v = k_0\,u_0+k_1\,u_1+k_2\,u_2
			+k_3\partial_z\,.
	\]
	\item 
	The metric $g = \eta(1+\tan^2(z))(dx^2+\sinh^2(x)dy^2+dz^2)$, $\eta\ne0$,
	has the projective symmetries
	\[
		v = k_0\,u_0+k_1\,u_1+k_2\,u_2
			+k_3\tan(z)\partial_z\,.
	\]
	\item 
	The metric $g = \eta(1-\tanh^2(z))(dx^2+\sinh^2(x)dy^2+dz^2)$,
	$\eta\ne0$,
	has the projective symmetries
	\[
		v = k_0\,u_0+k_1\,u_1+k_2\,u_2
			+k_3\tanh(z)\partial_z\,.
	\]
\end{enumerate}

\begin{remark}
The metric \eqref{eqn:g.zeta.hyperb} is of constant curvature if and only if $\zeta(z)=\mp \frac{1}{(c_1\sin(z)+c_2\cos(z))^2}$,  with $c_i\in\R$ such that $c_1^2+c_2^2\neq 0$.
\end{remark}

\section{Proof of Theorem~\ref{thm:main}}\label{sec:proof.main}

%
First let us recall that the metrics of Theorem~\ref{thm:main} are of 
non-constant curvature, and by assumption they do not admit any homothetic 
vector field.
The degree of mobility is therefore~$2$, and since the metric is of 
Riemannian signature, it follows that it is a Levi-Civita metric either of 
type [1-1-1] or of type [2-1].
Let us begin with type [1-1-1].
Reviewing Theorem~\ref{thm:1.1.1}, we observe that the metrics in 
Propositions~\ref{prop:111.2constantEVs} and~\ref{prop:111.1constantEVs} do 
admit a Killing vector field and therefore do not fall under the assumptions 
of Theorem~\ref{thm:main}.
The desired metrics thus follow, maybe after an obvious change of 
coordinates, from a straightforward analysis of the metrics in 
Proposition~\ref{prop:111.0constantEVs}. We arrive at the cases 
\ref{item:main.111.1} to~\ref{item:main.111.5} of 
Theorem~\ref{thm:main}.

Hence let us proceed to Levi-Civita metrics of [2-1] type.
Note that the conformal factor $\zeta(z)$ must be non-constant since 
otherwise there necessarily exists a Killing vector field contrary to the 
hypothesis. We are thus left with the metrics covered in 
Theorem~\ref{thm:2.1} and Proposition~\ref{prop:1D.algebra.21}.
The metrics covered by Theorem~\ref{thm:2.1} always admit a homothetic vector 
field, however, and thus we are left with those in 
Proposition~\ref{prop:1D.algebra.21}.
Among these metrics only three cases are compatible with the assumptions of 
Theorem~\ref{thm:main}. We arrive at cases \ref{item:main.21.1} 
to~\ref{item:main.21.3} in the claim.

\section*{Acknowledgements}
The authors thank Vladimir S.~Matveev and Stefan Rosemann for discussions.
Both authors acknowledge support through the project PRIN 2017 ``Real and
Complex Manifolds: Topology, Geometry and holomorphic dynamics'',  the project ``Connessioni, proiettive, equazioni di Monge-Amp\`ere e
sistemi integrabili'' (INdAM), the MIUR grant ``Dipartimenti di Eccellenza 2018-2022 (E11G18000350001)". AV is grateful for funding by the German Research Foundation (Deutsche Forschungsgemeinschaft) through the fellowship project grant 353063958. AV thanks the University of Stuttgart, Germany, and the University of New South Wales Sydney, Australia, for their kind hospitality. The authors are members of GNSAGA of INdAM.

\sloppy
\printbibliography

\end{document}